\newtheorem{thm}{Theorem}[section]
\newtheorem{cor}[thm]{Corollary}
\newtheorem{lem}[thm]{Lemma}
\newtheorem{prop}[thm]{Proposition}
\newtheorem{introthm}{Theorem}
\theoremstyle{definition}
 \newtheorem{introdefn}[introthm]{Definition}
\numberwithin{equation}{section}
\theoremstyle{definition}
\newtheorem{defn}[thm]{Definition}
\theoremstyle{remark}
\newtheorem{rem}[thm]{Remark}
\def\N{\mathbb{N}}
\def\Nd{\mathcal{N}}
\def\C{\mathbb{C}}
\def\CC{\mathcal{C}}
\def\R{\mathbb{R}}
\def\B{\mathfrak{B}}
\def\H{\mathcal{H}}
\def\K{\mathfrak{K}}
\def\HH{\mathscr{H}}
\def\supp{\mathrm{supp}}
\def\pro{\mathrm{prop}}
\def\Id{\mathrm{Id}}
\def\Cliff{\mathrm{Cliff}}
\def\diam{\mathrm{diam}}
\def\Ad{\mathrm{Ad}}
\def\cs{C^*_{sq}}
\def\c{C^*}
\def\cq{C^*_q}
\def\A{\mathcal{A}}
\def\sL{C^*_{L,sq}}
\def\L{C^*_L}
\def\qL{C^*_{L,q}}
\def\Q{\mathcal{Q}((\mathcal{A}_n\otimes\mathfrak{K})_{n})}
\def\q{\mathcal{Q}((\mathcal{A}_n)_{n})\otimes\mathfrak{K}}
\def\2Q{\mathcal{Q}((M_2(\mathcal{A}_n))_{n\in\mathbb{N}})\otimes\mathfrak{K}}
\def\+Q{\mathcal{Q}((M_2(\mathcal{A}_n^+))_{n\in\mathbb{N}})\otimes\mathfrak{K}}
\begin{document}

\title[The strongly quasi-local coarse Novikov conjecture]{The strongly quasi-local coarse Novikov conjecture and Banach spaces with Property (H)}

\author{Xiaoman Chen}
\author{Kun Gao}
\author{Jiawen Zhang}

\address{School of Mathematical Sciences, Fudan University, 220 Handan Road, Shanghai, 200433, China.}
\email{xchen@fudan.edu.cn, 19110180024@fudan.edu.cn, jiawenzhang@fudan.edu.cn}

\thanks{KG was supported by the National Natural Science Foundation of China under Grant (No.12071183).}

\date{}

\keywords{Roe algebras, (Strongly) quasi-local algebras, Coarse Novikov conjecture, Coarse embeddability, Property (H)}

\baselineskip=16pt

\begin{abstract}
In this paper, we introduce a strongly quasi-local version of the coarse Novikov conjecture, which states that certain assembly map from the coarse $K$-homology of a metric space to the $K$-theory of its strongly quasi-local algebra is injective. We prove that the conjecture holds for metric spaces with bounded geometry which can be coarsely embedded into Banach spaces with Property (H), introduced by Kasparov and Yu. Besides, we also generalise the notion of strong quasi-locality to proper metric spaces and provide a (strongly) quasi-local picture for $K$-homology.
\end{abstract}

\date{\today}
\maketitle

\parskip 4pt


\section{Introduction}

The coarse Novikov conjecture \cite{HR93, KY06, Yu95, yu1998:Novikov-for-FAD} asserts that the coarse Baum-Connes assembly map from the coarse $K$-homology $KX_\ast(X)$ of a metric space $X$ to the $K$-theory of its Roe algebra $C^*(X)$ is injective. Here the Roe algebra $C^*(X)$, encoding the coarse geometry of the space $X$, was introduced by John Roe \cite{roe1988:index-thm-on-open-mfds, Roe96} in his pioneering work to study index theory on open manifolds. 

The coarse Novikov conjecture is a geometric analogue of the strong Novikov conjecture \cite{BCH94} and suggests an approach to detect the higher index of the Dirac operator on a noncompact complete Riemannian manifold, which has many significant applications in geometry and topology. In particular, it implies the Gromov conjecture on non-existence of positive scalar curvature metrics on uniformly contractible Riemannian manifolds, and the zero-in-the-spectrum conjecture concerning the spectrum of the Laplacian operator.

Over the last two decades, there are a lot of remarkable progresses on the coarse Novikov conjecture. For example, Yu proved the coarse Baum-Connes conjecture, and consequently the coarse Novikov conjecture, for metric spaces with bounded geometry which admit a coarse embedding into a Hilbert space \cite{Yu00}. Later on, Kasparov and Yu proved the coarse Novikov conjecture for metric spaces with bounded geometry which can be coarsely embedded into a uniformly convex Banach space \cite{KY06}. In \cite{GWY08}, Gong, Wang and Yu introduced the maximal Roe algebras and studied a maximal version of the coarse Novikov conjecture. So far, no counterexample to the coarse Novikov conjecture has been discovered.

Property (H) is a geometric condition to Banach spaces, introduced by Kasparov and Yu in \cite{KY12} where they proved the strong Novikov conjecture for groups which can be coarsely embedded into a Banach space with Property (H). Later on, Wang, Yu and the first author proved the coarse Novikov conjecture for metric spaces with bounded geometry which can be coarsely embedded into a Banach space with Property (H) \cite{CWY15}.


\begin{introdefn}[\cite{KY12}]\label{defn:CWY15}
A real Banach space $ V $ is said to have Property (H) if there exist increasing sequences of finite dimensional subspaces $ \{V_n\}_{n\in\N} $ of $V$ and $ \{W_n\}_{n\in\N} $ of a real Hilbert space $W$ such that:
\begin{enumerate}
 \item $\bigcup_{n=1}^\infty V_n$ is dense in $V$;
 \item there exists a uniformly continuous map $ \psi:S(\bigcup_{n=1}^\infty V_n) \rightarrow S(\bigcup_{n=1}^\infty W_n) $ such that $\psi|_{S(V_n)}$ is a homeomorphism onto $ S(W_n) $ for any $n\in\N$, where $S(\cdot)$ denotes the unit sphere of a subspace of a Banach space.
\end{enumerate}
\end{introdefn}

For example, the Banach space $\ell^p(\N)$ and the Banach space of all Schattern-$p$ class operators on a separable Hilbert space have Property (H) for $p \geq 1$ due to the Mazur(-type) map. One can also check that uniformly convex Banach spaces with unconditional basis have Property (H) (see, \emph{e.g.}, \cite[Chapter 9]{BL00}).

In this paper, we introduce a strongly quasi-local version of the coarse Novikov conjecture and prove it for metric spaces with bounded geometry which can be coarsely embedded into a Banach space with Property (H). 

Recall that the notion of quasi-locality was introduced by Roe as an intrinsic characterisation for elements in the Roe algebra \cite{roe1988:index-thm-on-open-mfds}. It turns out that the quasi-local algebra $C^*_q(X)$ of a metric space $X$ (\emph{i.e.}, the $C^*$-algebra of all locally compact and quasi-local operators on $X$) contains the Roe algebra $C^*(X)$ as a $C^*$-subalgebra, and they coincide when the space $X$ has Yu's property A \cite{li2021quasi, SZ20}. On the other hand, Engel \cite{engel2015rough} discovered that while indices of genuine differential operators on Riemannian manifolds live in the $K$-theory of (appropriate) Roe algebras, the indices of uniform pseudo-differential operators are only known to be in the $K$-theory of quasi-local algebras. Hence it is important to study whether the Roe algebra and the quasi-local algebra have the same $K$-theory.

As a potential approach, the first and third authors together with Bao introduced the following notion of strong quasi-locality in \cite{quasi-local} (see Section \ref{sec:sq algebras} for more details). Fix an infinite-dimensional separable Hilbert space $\HH$ and denote $\K(\HH)_1$ the unit ball of the compact operators on $\HH$.

\begin{introdefn}[\cite{quasi-local}]\label{introdefn:strong quasi-locality}
Let $X$ be a discrete metric space with bounded geometry and $T \in \B(\ell^2(X; \HH))$. We say that $T$ is \emph{strongly quasi-local} if for any $\varepsilon > 0$ there exist $\delta, R >0$ such that for any map $g:X \rightarrow \K(\HH)_1$ satisfying that $d(x,y)< R $ implies $\|g(x) - g(y)\|<\delta$, we have
\begin{equation*}
\big\| [T\otimes \Id_{\HH} , \Lambda(g) ] \big\|< \varepsilon
\end{equation*}
where $\Lambda(g) \in \B(\ell^2(X; \HH \otimes \HH))$ is defined by $\Lambda(g)(\delta_x \otimes \xi \otimes \eta):=\delta_x \otimes \xi \otimes g(x)\eta$ for $\delta_x \otimes \xi \otimes \eta \in \ell^2(X; \HH \otimes \HH) \cong \ell^2(X) \otimes \HH \otimes \HH$.
\end{introdefn}

The strongly quasi-local algebra $C^*_{sq}(X)$ for a discrete metric space $X$ is defined to be the $C^*$-algebra of all locally compact and strongly quasi-local operators on $X$. The strongly quasi-local algebra $C^*_{sq}(X)$ contains the Roe algebra $C^*(X)$, and the main result of \cite{quasi-local} states that they have the same $K$-theory when $X$ has bounded geometry and can be coarsely embedded into a Hilbert space.

The main focus of this paper is the following strongly quasi-local version of the coarse Novikov conjecture:

\noindent\textbf{The strongly quasi-local coarse Novikov conjecture.} If $X$ is a discrete metric space with bounded geometry, then the following \emph{strongly quasi-local coarse assembly map} $\mu_{sq}:=i_\ast \circ \mu$ is injective:
\[
\mu_{sq}: KX_\ast(X) \stackrel{\mu}{\longrightarrow} K_\ast(C^*(X)) \stackrel{i_\ast}{\longrightarrow} K_\ast(C^*_{sq}(X))
\]
where $\mu$ is the coarse Baum-Connes assembly map.

One can also consider the strongly quasi-local coarse Baum-Connes conjecture which asserts that the above map $\mu_{sq}$ is bijective. Combining with Yu's result \cite{Yu00}, the main result of \cite{quasi-local} can be restated that the strongly quasi-local coarse Baum-Connes conjecture holds for metric spaces with bounded geometry which can be coarsely embedded into a Hilbert space.

The following is the main result of this paper:

\begin{introthm}\label{thm:main result}
	Let $X$ be a bounded geometry metric space which can be coarsely embedded into a Banach space with Property (H). Then the strongly quasi-local coarse Novikov conjecture holds for $X$.
\end{introthm}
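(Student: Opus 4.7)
The plan is to combine the approach of Chen-Wang-Yu~\cite{CWY15}---which establishes the classical coarse Novikov conjecture for metric spaces coarsely embedded into Banach spaces with Property~(H)---with the techniques from \cite{quasi-local}---which establishes the strongly quasi-local coarse Baum-Connes conjecture in the Hilbert-embedding case. Concretely, I would factor the strongly quasi-local assembly map through a strongly quasi-local localization algebra $\sL(X)$, introduce twisted strongly quasi-local algebras $\cs(X,V)$ and $\sL(X,V)$ associated to the coarse embedding target $V$, and construct a Bott-type map fitting into a commutative diagram that reduces injectivity to an easier statement on the twisted side.

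First, using the (strongly) quasi-local picture for $K$-homology developed later in the paper, I would identify $KX_*(X) \cong K_*(\sL(X))$ so that $\mu_{sq}$ becomes the evaluation-at-zero map $e_*: K_*(\sL(X)) \to K_*(\cs(X))$. Next, following \cite{KY12, CWY15}, I would associate to $V$ (with its Hilbert-space partner $W$ and Mazur-type map $\psi$ provided by Property~(H)) a graded $C^*$-algebra $\mathcal{A}(V) = \varinjlim_n \mathcal{A}(V_n)$ built from Clifford algebras of the $W_n$ transported along $\psi$, and define strongly quasi-local versions of the twisted algebras $\cs(X,V)$ and $\sL(X,V)$. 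Using the coarse embedding $f: X \to V$, I would construct the Bott/index map $\beta_L: K_*(\sL(X)) \to K_*(\sL(X,V))$ and its non-localized version $\beta$, fitting into a commutative square with the corresponding evaluation maps $e_*$ and $e^V_*$.

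Injectivity of $\mu_{sq} = e_*$ would then follow once one shows (i) $\beta_L$ is an isomorphism, and (ii) the twisted evaluation $e^V_*: K_*(\sL(X,V)) \to K_*(\cs(X,V))$ is injective. Part (i) would be established by a Mayer-Vietoris / inductive limit argument decomposing $V$ as a union of its finite-dimensional subspaces $V_n$, reducing via a strongly quasi-local Bott periodicity statement (which I would establish along the way) to the tautological fact that the identity on $K_*(\sL(X))$ is an isomorphism. Part (ii) uses Property~(H) of $V$ to transport the twisted picture on $V$ to one on the Hilbert space $W$ via the Mazur map $\psi$; the uniform continuity of $\psi$ is what produces both the comparison map and the uniform commutator estimates needed to preserve strong quasi-locality, reducing the problem to the Hilbert-embedding case treated by the methods of \cite{quasi-local}.

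The main obstacle will be verifying that strong quasi-locality is preserved at every stage, particularly in the Bott/index map construction and in the Mazur transport. The $\K(\HH)_1$-valued commutator condition of Definition~\ref{introdefn:strong quasi-locality} is strictly more restrictive than the scalar commutator condition underlying ordinary quasi-locality, so the estimates used in \cite{CWY15} do not apply directly. However, the relevant uniform-continuity bounds from Property~(H) are intrinsically geometric (depending only on $\psi$ being uniformly continuous), and \cite{quasi-local} has already demonstrated how to upgrade such scalar-type estimates to the $\K(\HH)_1$-valued setting; combining these ideas should yield the needed commutator control at each step, though carefully bookkeeping the various tensor factors of $\HH$ that appear will be delicate.
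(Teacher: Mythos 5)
Your proposal captures the broad flavor of the argument---Bott maps, twisted algebras, Mayer--Vietoris decompositions of the target Banach space, and the tension between the operator-valued commutator condition of strong quasi-locality and the scalar estimates of \cite{CWY15}---but its overall architecture differs from the paper's, and it introduces difficulties the paper deliberately sidesteps.

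The paper does \emph{not} attempt to show that a twisted evaluation map $e^V_*$ is injective in a strongly quasi-local setting (your step (ii)), nor that a localized Bott map $\beta_L$ is an isomorphism (your step (i)). Instead it treats the CWY15 result as a black box: \cite[Theorem 1.1]{CWY15} already shows that the composition $\iota_*\circ\beta_r\circ\mu: K_*(P_d(X))\to K_*(C^*(P_d(X),\Q))$ is injective. The paper then sets up the commutative square
\[
\begin{array}{ccc}
K_*(C^*(P_d(X))) & \xrightarrow{\beta_r} & K_*(C^*(P_d(X),\q))\\
\downarrow i_* & & \downarrow i^{\A}_*\\
K_*(C^*_{sq}(P_d(X))) & \xrightarrow{\beta_{sq}} & K_*(C^*_q(P_d(X),\q))
\end{array}
\]
and proves (by Mayer--Vietoris) that the right vertical arrow $i^{\A}_*$ is an isomorphism (Theorem~\ref{thm:isomorphism between twisted Roe algebra and twisted quasi-local algebra in K-theory}). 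Injectivity of $\mu_{sq}$ then follows by a two-line diagram chase. This avoids re-proving any twisted assembly or Bott-periodicity statement in the strongly quasi-local setting.

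Two concrete structural points your sketch misses. First, the paper's Bott map $\beta_{sq}$ lands in the twisted \emph{quasi-local} algebra $C^*_q(P_d(X),\q)$, not a twisted \emph{strongly} quasi-local algebra. You propose defining $\cs(X,V)$ and $\sL(X,V)$, but the paper never needs these; the Bott construction only requires strong quasi-locality of the input idempotent $P$, and its output (Lemma~\ref{lem:Bott sq P-element}) naturally sits in the twisted quasi-local algebra. Chasing strong quasi-locality through the twisted side would add genuine complications with no payoff. Second, the paper uses the \emph{restricted} coefficient algebra $\q=\mathcal{Q}((\A_n)_n)\otimes\K$ rather than $\Q=\mathcal{Q}((\A_n\otimes\K)_n)$; this is essential because the strongly quasi-local Bott construction tensors by $\HH$ internally and needs the $\K$-factor to sit outside the quotient (see Remark~\ref{rem:diff between twisted Roe}). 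Your sketch does not anticipate this distinction, and it is not cosmetic: the paper explicitly notes it is unclear whether the construction can be carried out with the original coefficients $\Q$.

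Finally, your step (ii)---showing the twisted strongly quasi-local evaluation is injective by Mazur transport back to the Hilbert case and appealing to \cite{quasi-local}---is not a small step you can wave at; \cite{quasi-local} proves the untwisted strongly quasi-local CBC for Hilbert-embeddable spaces, not a twisted statement, and the Mazur transport in CWY15 operates at the level of twisted Roe algebras where finite-propagation approximations are available. Whether that transport preserves strong quasi-locality is exactly the kind of thing that the paper's Section~\ref{sec:Bott} shows is delicate (Lemma~\ref{lem:the norm on Hilbert module}, Lemma~\ref{lem:approximating idempotent}). Your proposal would need a genuinely new argument at this point. The paper avoids the issue entirely by the diagram chase above.

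In short: your high-level instincts are sound, and the identification $KX_*(X)\cong K_*(\sL(X))$ you propose is indeed in the paper (Theorem~\ref{introthm:localisation}, Corollary~\ref{cor:equiv for sqcbc}), but it serves as a conceptual remark rather than the engine of the proof. The engine is the commutative square linking the strongly quasi-local Bott map to the classical one from \cite{CWY15}, together with the $K$-theory isomorphism $i^{\A}_*$ between the restricted twisted Roe algebra and its quasi-local counterpart. Your route, if it works at all, is substantially longer and would require first developing twisted strongly quasi-local algebras and a twisted strongly quasi-local Baum--Connes theory that nobody has established.
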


The proof of Theorem \ref{thm:main result} is inspired by that of \cite[Theorem 1.1]{CWY15}, but is more involved and requires new techniques. Amongst other pieces, the key ingredient in the proof is that we introduce a twisted quasi-local algebra and construct a Bott map from the $K$-theory of strongly quasi-local algebra to that of the twisted quasi-local algebra. Recall that in the case of Roe algebras, a Bott map was constructed in \cite{CWY15} thanks to the finite propagation approximations for operators in Roe algebras. However for general operators in the strongly quasi-local algebra, it is unclear whether one can still find finite propagation approximations. To overcome this issue, we make use of the hypothesis of strong quasi-locality together with some technical arguments to obtain the required Bott map.

On the other hand, we also provide a (strongly) quasi-local picture for $K$-homology. Recall that Yu \cite{Yu97} introduced a notion of localisation algebra $C^*_L(X)$ for a proper metric space $X$ and showed that its $K$-theory is isomorphic to the $K$-homology of $X$ (see also \cite{QR10}). Note that operators in $C^*_L(X)$ have smaller and smaller propagation as the parameter tends to infinity. Following the same idea, we introduce a notion of localisation (strongly) quasi-local algebra where the ``extent'' of (strong) quasi-locality of operators therein get better and better as the parameter tends to infinity (see Definition \ref{defn:localisation quasi-local Roe algebra} and \ref{defn:localisation strongly quasi-local Roe algebra}). We prove the following:

\begin{introthm}\label{introthm:localisation}
Let $X$ be a proper metric space. The localisation algebra, the localisation quasi-local algebra and the localisation strongly quasi-local algebra have the same $K$-theory, which coincides with the $K$-homology of $X$.
\end{introthm}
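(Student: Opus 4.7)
The plan is to show that the natural inclusions
\[
C^*_L(X) \hookrightarrow C^*_{L,sq}(X) \hookrightarrow C^*_{L,q}(X)
\]
induce isomorphisms on $K$-theory, and then invoke the classical identification $K_\ast(C^*_L(X)) \cong K_\ast(X)$ due to Yu to conclude. The latter has been cited in the excerpt and will be taken as a black box.

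I would first dispatch the base case of a bounded proper metric space $X$. When $\diam(X)<\infty$, choosing $R>\diam(X)$ renders the quasi-locality condition vacuous, because no two subsets of $X$ are at distance greater than $R$; every locally compact operator is then automatically quasi-local. The same choice trivialises the strong quasi-locality commutator estimate of Definition \ref{introdefn:strong quasi-locality}: the hypothesis $d(x,y)<R$ is automatic, so the map $g:X\to\K(\HH)_1$ may be prescribed arbitrarily subject to uniform continuity on the bounded set $X$, and the bound on $\|g(x)-g(y)\|$ can be arranged for any $\delta$. Consequently $C^*_L(X)=C^*_{L,sq}(X)=C^*_{L,q}(X)$ on bounded spaces, and the theorem is trivial in this case.

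For a general proper metric space I would proceed by a Mayer--Vietoris / cutting-and-pasting argument, following the template of Yu's original treatment of the localisation algebra. Given a decomposition $X=Y\cup Z$ into closed subsets with bounded overlap $Y\cap Z$, each of the three localisation algebras should fit into a six-term Mayer--Vietoris sequence in $K$-theory, arising from the short exact sequence of ideals of operators supported near $Y$, near $Z$, and near $Y\cap Z$. The inclusions are natural with respect to these decompositions, producing a commutative ladder of Mayer--Vietoris sequences. Writing $X$ as an exhaustion by closed balls around a basepoint and iterating the Mayer--Vietoris decomposition, the five lemma combined with continuity of $K$-theory for the resulting direct limits reduces the problem to the bounded pieces handled above.

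The principal obstacle is establishing the Mayer--Vietoris sequence for the (strongly) quasi-local localisation algebras and verifying it is functorially compatible with the inclusions. For $C^*_{L,q}$ the argument adapts Yu's fairly directly, since quasi-locality descends to subspaces under cutting by characteristic functions. For $C^*_{L,sq}$, the delicate point is that the commutator estimate $\|[T\otimes\Id_\HH,\Lambda(g)]\|<\varepsilon$ must be shown to descend to the corner $\chi_Y T\chi_Y$; since $\Lambda(g)$ is diagonal in the $X$-direction and commutes with $\chi_Y\otimes\Id$, the estimate does survive restriction, but careful bookkeeping is required to identify the quotient of the ``supported near $Y$'' ideal by the ``supported near $Y\cap Z$'' ideal as $C^*_{L,sq}(Y)/C^*_{L,sq}(Y\cap Z)$ in a manner compatible with the analogous identifications for $C^*_L$ and $C^*_{L,q}$. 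This compatibility check is the main technical hurdle I would expect to encounter.
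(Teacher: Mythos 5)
Your approach is genuinely different from the paper's, and it has a flaw at the very first step: the bounded base case is \emph{not} trivial. You argue that for $\diam(X)<\infty$ one may take $R>\diam(X)$ to render the (strong) quasi-locality hypotheses vacuous. That reasoning is correct for the Roe-type algebras $C^*(X)$, $C^*_q(X)$, $C^*_{sq}(X)$ (they all equal $\K(\H_X)$ on a compact space), but it does \emph{not} apply to the localisation algebras. Look at the quantifier order in Definitions \ref{defn:localisation quasi-local Roe algebra} and \ref{defn:localisation strongly quasi-local Roe algebra}: for $C^*_{L,q}(X)$ one requires that \emph{for every} $R>0$ (in particular arbitrarily small $R$) there exists $t_0$ such that $\|\chi_C T_t\chi_D\|<\epsilon$ whenever $d(C,D)>R$ and $t>t_0$; for $C^*_{L,sq}(X)$ one similarly needs the commutator estimate to hold against $g$ with $(\delta,R)$-variation for \emph{every} $R>0$, with $t_0$ depending on $R$. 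Likewise $C^*_L(X)$ requires $\pro(T_t)\to0$. These are genuine local conditions in the asymptotic parameter $t$, and they do not become vacuous on a bounded space; taking $R>\diam(X)$ verifies only one instance of the universally quantified $R$. Concretely, on a closed interval there are many bounded uniformly continuous paths $(T_t)$ with compact values that lie in $C^*_{L,q}$ but not in $C^*_L$, and nothing you have said shows the inclusions induce $K$-theory isomorphisms there. So your induction has no base, and the Mayer--Vietoris machinery (which would itself require nontrivial verification for $C^*_{L,sq}$) cannot be started.

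For comparison, the paper proceeds entirely differently (Proposition \ref{prop:algebraic iso between loc alg}, Lemma \ref{lem:K-homology lemma 2}, Theorem \ref{thm:loc alg have same K-theory}). It introduces the ``almost commutant'' algebra $\CC_L(C_0(X))$ of \cite{DMW18}, which contains all three localisation algebras, together with the ideals $\CC_{L,0}(C_0(X))$, $C^*_{L,0}$, $C^*_{L,sq,0}$, $C^*_{L,q,0}$ of paths that vanish asymptotically on compacta. The key technical input is Proposition \ref{prop:algebraic iso between loc alg}: every $(T_t)\in\CC_L(C_0(X))$ differs from a path $(S_t)\in C^*_L(X)$, built by partition-of-unity averaging $S_{n,t}=\sum_i\sqrt{\phi_i^{(n)}}T_t\sqrt{\phi_i^{(n)}}$ at finer and finer scales, by an element of $\CC_{L,0}(C_0(X))$. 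Combined with the Eilenberg swindle showing all four $\ast\text{-}0$ ideals have vanishing $K$-theory, the six-term exact sequence gives the result in one stroke, with no cutting-and-pasting or reduction to bounded pieces. If you want to salvage your plan, you would at minimum need a direct argument that $K_\ast(C^*_L)\cong K_\ast(C^*_{L,sq})\cong K_\ast(C^*_{L,q})$ on compact spaces, and for that the paper's quotient-by-$\ast\text{-}0$-ideals technique seems unavoidable; at which point the global argument already works and Mayer--Vietoris is unnecessary.
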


Consequently, we obtain a (strongly) quasi-local description of the coarse $K$-homology. Moreover, the strongly quasi-local coarse assembly map $\mu_{sq}$ above is induced by an evaluation map (see Corollary \ref{cor:equiv for sqcbc}). This is also based on our generalised notion of strong quasi-locality for proper metric spaces (note that the original one introduced in \cite{quasi-local} is only for discrete metric spaces).

The paper is organised as follows. In Section \ref{sec:pre}, we collect notions from coarse geometry and recall the definitions of Roe and quasi-local algebras. In Section \ref{sec:sq algebras}, we generalise the notion of strong quasi-locality to the case of proper metric spaces and verify their coarse geometric properties, based on which we introduce a (strong) quasi-local picture for $K$-homology and prove Theorem \ref{introthm:localisation} in Section \ref{sec:localisation}. The rest of the paper is contributed to the proof of Theorem \ref{thm:main result}. More precisely, in Section \ref{sec:twisted alg} we recall the twisted Roe algebras and introduce their quasi-local counterparts for metric spaces which can be coarsely embedded into a Banach space with Property (H). Then in Section \ref{sec:Bott} we construct Bott maps to link the $K$-theories of Roe and strongly quasi-local algebras with those of their twisted counterparts, and prove that these twisted algebras have the same $K$-theory (Theorem \ref{thm:isomorphism between twisted Roe algebra and twisted quasi-local algebra in K-theory}) in Section \ref{sec:loc isom}. We finish the proof of Theorem \ref{thm:main result} in Section \ref{sec:proof}. Finally in Section \ref{sec:mfd}, we outline the case of metric spaces which can be coarsely embedded into a simply-connected complete Riemannian manifold with non-positively sectional curvature.


\textbf{Convention.} Throughout the paper, we fix an infinite-dimensional separable Hilbert space $\HH$. Denote by $\K:=\K(\HH)$ the $C^*$-algebra of compact operators on $\HH$, and $\K_1$ its closed unit ball (with respect to the operator norm).

\section{Preliminaries}\label{sec:pre}

We start with some notions from coarse geometry and higher index theory.

\subsection{Notions from coarse geometry}
Here we collect several basic notions.
\begin{defn}\label{defn:metric spaces with bounded geometry}
	Let $(X,d)$ be a metric space.
	\begin{enumerate}
    \item A subset $A\subseteq X$ is said to be \emph{bounded} if $\sup\{d(x,y): x,y\in A\}$ is finite.
     \item For $x\in X$ and $R>0$, the \emph{open $R$-ball of $x$} is $B(x,R):=\{y\in X: d(x,y)< R \}$.  
    \item $(X,d)$ is said to be \emph{proper} if any bounded closed subset in $X$ is compact.	
    \item For a subset $A\subseteq X$ and $R\geq 0$, the \emph{$R$-neighbourhood of $A$} is defined to be $\Nd_R(A):=\{x\in X: d_X(x,A) \leq R\}$. 
    \item A subset $A\subseteq X$ is called a \emph{net} in $X$ if there exists $C>0$ such that for any $x\in X$ there exists $y\in A$ with $d(x,y) \leq C$. In this case, we also say that $A$ is a \emph{$C$-net} in $X$.
    \item A subset $A\subseteq X$ is said to be \emph{uniformly discrete} if there exists $C>0$ such that $d(x,y) \geq C$ for any $x\neq y$ in $X$. In this case, we also say that $A$ is \emph{$C$-discrete}.
	\item $(X,d)$ is said to have \emph{bounded geometry} if $X$ contains a uniformly discrete net $\Gamma$ with bounded geometry, \emph{i.e.}, for any $ R\geq 0$ there exists an $ N\in\N $ such that $ \#\big(B(x,R)\cap \Gamma \big)\leq N $ for any $ x\in \Gamma$. 
    \end{enumerate}
\end{defn}


\begin{defn}\label{defn:coarse map}
    Let $ f $ be a map between two metric spaces $ (X,d_X) $ and $ (Y,d_Y) $.
	\begin{enumerate}
		\item $f$ is \emph{uniformly expansive} if there exists a non-decreasing function $ \rho_+: [0,\infty) \to [0,\infty)$ such that for any $x,y\in X$ we have:
		\begin{center}
		$d_Y(f(x),f(y)) \leq \rho_+(d_X(x,y))$.
		\end{center}
		\item $f$ is \emph{proper} if for any bounded $B\subseteq Y$, the preimage $f^{-1}(B)$ is bounded in $X$.
		\item $f$ is \emph{coarse} if it is uniformly expansive and proper.
		\item $f$ is \emph{effectively proper} if there exists a proper non-decreasing function $\rho_-: [0,\infty) \to [0,\infty)$ such that for any $x,y\in X$ we have:
		\begin{center}
		$ \rho_-(d_X(x,y)) \leq d_Y(f(x),f(y)) $.
		\end{center}
		\item $f$ is a \emph{coarse embedding} if it is uniformly expansive and effectively proper.	
    \end{enumerate}		
\end{defn}

\begin{defn}\label{defn:coarse equivalence}
	Let $(X,d_X)$ and $(Y,d_Y)$ be metric spaces. 
	\begin{enumerate}
		\item Two maps $f,g: (X,d_X) \to (Y,d_Y)$ are \emph{close} if there exists $R\ge 0$ such that for any $x\in X$, we have $d_Y(f(x),g(x))\le R$.
		\item A coarse map $f: (X,d_X) \to (Y,d_Y)$ is called a \emph{coarse equivalence} if there exists another coarse map $g: (Y,d_Y) \to (X,d_X)$ such that $f\circ g$ and $g\circ f$ are close to identities, where $g$ is called a \emph{coarse inverse} of $f$. 
		\item $(X,d_X)$ and $(Y,d_Y)$ are said to be \emph{coarsely equivalent} if there exists a coarse equivalence from $X$ to $Y$. 
	\end{enumerate}
\end{defn}

\subsection{Roe algbras and quasi-local algebras}

For a proper metric space $(X,d_X)$, recall that an \emph{$X$-module} is a non-degenerate $\ast$-representation $C_0(X) \to \B(\H_X)$ where $\H_X$ is some infinite-dimensional separable Hilbert space. We also say that $\H_X$ is an $X$-module if the representation is clear from the context. An $X$-module is called \emph{ample} if no non-zero element of $C_0(X)$ acts as a compact operator on $\H_X$. Note that every proper metric space $X$ admits an ample $X$-module. 

Let $\H_X$ and $\H_Y$ be ample modules of proper metric spaces $X$ and $Y$, respectively. Given an operator $T\in \B(\H_X,\H_Y)$, the \emph{support} of $T$ is defined to be
\[
\supp(T):=\big\{(y,x)\in Y\times X: \chi_V T\chi_U \neq 0 \mbox{~for~all~neighbourhoods~}U\mbox{~of~}x\mbox{~and~}V\mbox{~of~}y\big\}.
\]
When $X=Y$, the \emph{propagation} of $T \in \B(\H_X)$ is defined to be
\[
\pro(T):=\sup\{d_X(y,x): (y,x)\in \supp(T)\}.
\]
We say that an operator $T\in \B(\H_X)$ has \emph{finite propagation} if $\pro(T)$ is finite, and $T$ is \emph{locally compact} if $fT$ and $Tf$ are compact opretors for all $f\in C_0(X)$.

\begin{defn}\label{defn:Roe algebras}
	For a proper metric space $X$ and an ample $X$-module $\H_X$, $\C[\H_X]$ is defined to be the $*$-algebra of locally compact and finite propagation operators on $\H_X$, and the \emph{Roe algebra $\c(\H_X)$} of $\H_X$ is defined to be the norm-closure of $\C[\H_X]$ in $\B(\H_X)$. 
\end{defn}

It is a standard result that the Roe algebra $\c(\H_X)$ does not depend on the chosen ample module $\H_X$ up to $*$-isomorphisms, hence denoted by $\c(X)$ and called the \emph{Roe algebra of $X$}. Furthermore, $\c(X)$ is a coarse invariant of the metric space $X$ (up to non-canonical $*$-isomorphisms), and their $K$-theories are coarse invariants up to canonical isomorphisms (see, \emph{e.g.}, \cite{roe1993coarse}).

Now we move on to the notion of quasi-locality, introduced by Roe in \cite{roe1988:index-thm-on-open-mfds}.

\begin{defn}\label{defn:quasi-locality}
Let $(X,d)$ be a metric space, and $ \H_X $ be an ample $X$-module. An operator $T\in\B(\H_X)$ is said to be \emph{quasi-local} if for any $\epsilon>0$ there exists $R>0$ such that for any $ A,B \subseteq X $ with $ d(A,B)>R $, we have $ \|\chi_AT\chi_B\|<\epsilon $. 
\end{defn}

It is clear that the set of all quasi-local operators on $\H_X$ forms a $C^*$-subalgebra in $\B(\H_X)$, which leads to the following:

\begin{defn}\label{defn:quasi-local algebra}
For a proper metric space $X$ and an ample $X$-module $\H_X$, the set of locally compact quasi-local operators on $\H_X$ is called the \emph{quasi-local algebra of $\H_X$}, denoted by $C^*_q(\H_X)$. 
\end{defn}

It follows directly from definitions that finite propagation operators are quasi-local, and hence the Roe algebra $C^*(\H_X)$ is a subalgebra of $C^*_q(\H_X)$. For the converse, it was shown in \cite{SZ20} that these two algebras coincide when $X$ has property A. However, the general case is still widely open.

As in the case of Roe algebras, it was proved in \cite[Corollary 2.10]{quasi-local} that the quasi-local algebra $\c_q(\H_X)$ does not depend on the chosen ample module $\H_X$ up to $*$-isomorphisms. Hence we call it the \emph{quasi-local algebra of $X$} and denote by $\c_q(X)$. Furthermore, it was shown that quasi-local algebras are coarse invariants (up to non-canonical $*$-isomorphisms), and their K-theories are coarse invariants up to canonical isomorphisms.

We would like to recall the following characterisation for quasi-locality from \cite{ST19}, which is also the motivation to introduce strong quasi-locality in \cite{quasi-local}. To state the result, we need some more notions:

\begin{defn}\label{defn:variation}
Let $(X,d_X)$ be a metric space and $g:X\rightarrow \C$ be a Borel function. We say that $g$ has \emph{$(\varepsilon,R)$-variation} if for any $x,y\in X$ with $d_X(x,y) < R$, we have $|g(x)-g(y)|< \varepsilon$. We say that $g$ is \emph{bounded} if its norm $\|g\|_\infty:=\sup_{x\in X}|g(x)|$ is finite. 
\end{defn}

\begin{prop}\label{prop:pictures of quasi-locality}
	Let $X$ be a proper metric space, $\H_X$ an ample $X$-module and $T\in\B(\H_X)$ be a locally compact operator. Then the following are equivalent:
	\begin{enumerate}
		\item $T$ is quasi-local in the sense of Definition \ref{defn:quasi-locality};
		\item For any $\varepsilon>0$, there exist $\delta,R>0$ such that for any continuous function $g:X \to \C$ with norm $1$ and $(\delta,R)$-variation then $\|[T,g]\|<\varepsilon$;
		\item For any $\varepsilon>0$, there exist $\delta,R>0$ such that for any Borel function $g:X \to \C$ with norm $1$ and $(\delta,R)$-variation then $\|[T,g]\|<\varepsilon$.
	\end{enumerate}
\end{prop}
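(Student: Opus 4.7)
The plan is to prove the cycle of implications $(3) \Rightarrow (2) \Rightarrow (1) \Rightarrow (3)$. The first implication is tautological, since every bounded continuous function on $X$ is Borel.

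For $(2) \Rightarrow (1)$, I would fix $\varepsilon > 0$ and let $\delta, R_0 > 0$ be the constants provided by $(2)$. Setting $R := R_0/\delta$, given any Borel subsets $A, B \subseteq X$ with $d(A,B) > R$, consider the continuous function $g(x) := \max\bigl(0,\, 1 - d(x,B)/R\bigr)$. It has $\|g\|_\infty = 1$ and is $1/R$-Lipschitz, hence satisfies $(\delta, R_0)$-variation. Since $g \equiv 1$ on $B$ and $g \equiv 0$ on $A$ (using $d(A,B) > R$), one has $g\chi_B = \chi_B$ and $\chi_A g = 0$, so
\[
\chi_A T\chi_B \;=\; \chi_A T g\chi_B \;=\; \chi_A[T,g]\chi_B,
\]
which gives $\|\chi_A T\chi_B\| \leq \|[T,g]\| < \varepsilon$, i.e.\ quasi-locality of $T$.

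The main work lies in $(1) \Rightarrow (3)$. After decomposing into real and imaginary parts, it suffices to treat real-valued $g$ with $\|g\|_\infty \leq 1$. Partition $[-1,1]$ into intervals $I_0,\ldots,I_{N-1}$ of length $\delta$ (so $N \leq \lceil 2/\delta\rceil$), set $E_j := g^{-1}(I_j)$, and replace $g$ by the step function $\tilde g := \sum_{j} c_j \chi_{E_j}$ with $c_j$ the midpoint of $I_j$. Then $\|g - \tilde g\|_\infty \leq \delta/2$, so $\|[T,g] - [T,\tilde g]\| \leq \delta\|T\|$, while a direct block computation expands
\[
[T, \tilde g] \;=\; \sum_{k} k\delta \cdot D_k, \qquad D_k \;:=\; \sum_i \chi_{E_i} T \chi_{E_{i+k}}.
\]
The key observation is that whenever $|k| \geq 2$, if $x \in E_i$ and $y \in E_{i+k}$ then $|g(x) - g(y)| \geq \delta$, so $(\delta, R)$-variation forces $d(x,y) \geq R$. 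Hence each block $\chi_{E_i} T \chi_{E_{i+k}}$ is bounded by the quasi-locality constant $\varepsilon'$ produced by $(1)$ at distance $R$. Using that the $E_i$ are pairwise disjoint, an orthogonality computation of the form $\|D_k\xi\|^2 = \sum_i \|\chi_{E_i}T\chi_{E_{i+k}}\xi\|^2 \leq \varepsilon'^2\sum_i\|\chi_{E_{i+k}}\xi\|^2 \leq \varepsilon'^2\|\xi\|^2$ yields $\|D_k\| \leq \varepsilon'$ for $|k| \geq 2$, and the same argument with $\|T\|$ in place of $\varepsilon'$ gives $\|D_k\| \leq \|T\|$ for $|k| \leq 1$.

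Combining these with $\sum_{|k|\geq 2} |k|\delta = O(1/\delta)$ yields $\|[T,g]\| \leq 3\delta\|T\| + C\varepsilon'/\delta$ for an absolute constant $C$. The proof finishes by first choosing $\delta$ small enough that $3\delta\|T\| < \varepsilon/2$, and then invoking $(1)$ to pick $R$ so large that $\varepsilon' < \varepsilon\delta/(2C)$. I expect the principal obstacle to be precisely this two-scale parameter balance: the off-diagonal blocks are individually of size only $\varepsilon'$, but their weights accumulate to $\sim 1/\delta$, so one must force $\varepsilon'$ to be much smaller than $\delta$. This is feasible because once $\delta$ is fixed, quasi-locality allows $R$ to be taken arbitrarily large, making $\varepsilon'$ as small as desired.
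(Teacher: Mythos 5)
Your proposal is correct and complete. Note that the paper itself does not actually write out a proof of this proposition; it defers to the cited references, stating that $(1)\Leftrightarrow(2)$ is the ``easier'' direction of \cite[Theorem~2.8]{ST19} and that the same argument yields $(1)\Leftrightarrow(3)$. Your argument is essentially the standard one that those sources employ: the Lipschitz bump $g=\max(0,1-d(\cdot,B)/R)$ for $(2)\Rightarrow(1)$, and for $(1)\Rightarrow(3)$ the level-set partition $E_j = g^{-1}(I_j)$, approximation by the step function $\tilde g$, and the decomposition of $[T,\tilde g]$ into diagonal bands $D_k$ controlled by disjointness/orthogonality, with the two-scale choice ``first $\delta$, then $R$'' closing the estimate. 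Two points of bookkeeping worth attending to when written in full: the bound $|g(x)-g(y)|>\delta$ for $|k|\geq 2$ only gives $d(E_i,E_{i+k})\geq R$, whereas Definition~\ref{defn:quasi-locality} asks for $d(A,B)>R$, so one should apply quasi-locality at a slightly smaller scale (e.g.\ $R-1$) or demand $(\delta,R+1)$-variation; and the split of $g$ into real and imaginary parts costs a harmless factor of two that should be absorbed into the choices of $\delta$ and $\varepsilon'$. Neither affects the validity of the argument.
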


Note that the equivalence between (1) and (2) is the ``easier'' part of \cite[Theorem 2.8]{ST19} (see also \cite[Proposition 3.3]{quasi-local}). And also note the same argument can deduce the equivalence between (1) and (3), hence omitted. 

We remark that condition (3) above is the starting point to introduce the notion of strong quasi-locality for general proper metric spaces in Section \ref{sec:sq algebras}.

%

\subsection{Coarse $K$-homology and the assembly map}

Recall that the locally finite $K$-homology groups $ K_i(X)$ ($i=0,1$) for a proper metric space $ X $ are generated by certain cycles modulo certain equivalence relations \cite{Kas75,Kas88}:
	\begin{enumerate}
		\item a cycle for $ K_0(X) $ is a pair $ (\H_X,F) $, where $ \H_X $ is an $ X $-modele and $ F $ is a bounded linear operator acting on $ \H_X $ such that $ F^*F-I $ and $ FF^*-I $ are locally compact, and $ \phi F-F\phi $ is compact for all $ \phi\in C_0(X) $;
		\item a cycle for $ K_1(X) $ is a pair $ (\H_X,F) $, where $ \H_X $ is an $ X $-modele and $ F $ is a self-adjoint operator acting on $ \H_X $ such that $ F^2-I $ are locally compact, and $ \phi F-F\phi $ is compact for all $ \phi\in C_0(X) $;
	\end{enumerate}

In both cases, the equivalence relations on cycles are given by homotopy of the operators $ F $, unitary equivalence, and direct sum with ``degenerate'' cycles, \emph{i.e.}, those cycles for which $ F\phi-\phi F,\phi(F^*F-I) $ and so on, are actually zero.

Now we recall the definition of the assembly map $\mu: K_\ast(X) \to K_\ast(C^*(X))$.

\begin{defn}\label{defn:X-assembly map}
	Let $ (\H_X,F) $ represent a cycle in $ K_0(X) $. For each $ R>0 $, we take a locally finite uniformly bounded open cover $ {\{U_i\}}_i $ of $ X $ with $ \diam(U_i)<R $ for all $ i $, and a continuous partition unity $ {\{\phi_i\}}_i $ subordinate to $ {\{U_i\}}_i $. Define $ F=\sum_i \phi^{1/2}_i T \phi^{1/2}_i$, where the sum converges in the strong operator topology. Then $ (\H_X,F) $ and $ (\H_X,T) $ are equivalent in $ K_0(X) $. The operator $F$ has finite prapagation less than $ R $, hence $ F^*F-I $ and $ FF^*-I $ are in $ \C[X] $. We set:
	\begin{center}
		$ W=\begin{pmatrix}
			I & F \\
			0 & F
		\end{pmatrix}
		\begin{pmatrix}
			I & 0 \\
			-F^* & I
		\end{pmatrix} 
		\begin{pmatrix}
			I & F \\
			0 & F
		\end{pmatrix}
		\begin{pmatrix}
			0 & -I \\
			I & 0
		\end{pmatrix}\in\B(\H_X\oplus\H_X)
		$.
	\end{center}
	Then both $ W $ and $ W^{-1} $ have finite propagations (at most $ 3R $), and
	\begin{center}
		$ W\begin{pmatrix}
			I & 0 \\
			0 & 0
		\end{pmatrix}W^{-1}-
		\begin{pmatrix}
			I & 0 \\
			0 & 0
		\end{pmatrix}\in\C[X]\otimes M_2 $.
	\end{center}
	We then define
	\begin{center}
		$ \mu\big([(\H_X,T)]\big):=\Bigg[W
		\begin{pmatrix}
			I & 0 \\
			0 & 0
		\end{pmatrix}W^{-1}\Bigg]-
		\Bigg[
		\begin{pmatrix}
			I & 0 \\
			0 & 0
		\end{pmatrix}\Bigg] $
	\end{center}
	in $ K_0(\C[X]) $. Furthermore, $ \mu\big([(\H_X,T)]\big) $	defines an element in $ K_0(\c(X)) $ by considering $ \C[X] $ as a $ \ast $-subalgebra of $ \c(X) $. This element is denoted by $ \mu\big([(\H_X,T)]\big)\in K_0(\c(X)) $. Thus, we obtain the assembly map $ \mu:K_0(X)\to K_0(\c(X)) $. Similarly, we can define $ \mu:K_1(X)\to K_1(\c(X)) $.
\end{defn}

To recall the definition of coarse $ K $-homology, we need the notion of \emph{Rips complex}.

\begin{defn}\label{defn:rips complex}
	Let $ X $ be a discrete metric space with bounded geometry. For each $ d\ge0 $, the \emph{Rips complex $ P_d(X) $ at scale $ d $} is defined to be the simplicial complex in which the vertex set is $ X $ and a finite subset $ \{x_0,x_1,\dots,x_q\}\subseteq X $ spans a simplex if and only if $ d(x_i,x_j)\le d $ for all $ 0\le i,j\le q $.
	
	We endow $ P_d(X) $ with the following \emph{spherical metric}. On each connected component of $ P_d(X) $, the spherical metric is the maximal metric whose restriction on each simplex $ \Delta:=\{\sum_{i=0}^q t_ix_i : t_i\ge0,\sum_{i=0}^q t_i=1\} $ is the metric obtained by identifying $\Delta$ with $ S_+^q $ via the map
\[
\sum_{i=0}^q t_ix_i \mapsto \Bigg(\frac{t_0}{\sqrt{\sum_{i=0}^qt_i^2}},\frac{t_1}{\sqrt{\sum_{i=0}^qt_i^2}},\dots,\frac{t_q}{\sqrt{\sum_{i=0}^qt_i^2}}\Bigg),
\]
	where $ S_+^q:=\{(s_0,s_1,\dots,s_q)\in\R^{q+1} : s_i\ge0,\sum_{i=0}^q s_i^2=1\} $ is endowed with the standard Riemannian metric. If $ y_0,y_1 $ belong to two different connected components $ Y_0,Y_1 $ of $ P_d(X) $, respectively, we define
\[
d(y_0,y_1)=\min\big\{d(y_0,x_0)+d_X(x_0,x_1)+d(x_1,y_1) : x_0\in X\cap Y_0,x_1\in X\cap Y_1\big\}.
\]
\end{defn}

Note that for any $ d\ge0 $, $ P_d(X) $ is coarsely equivalent to $ X $ via the inclusion map. If $ d<d' $, then $ P_d(X) \subseteq P_{d'}(X) $ as a subcomplex. Hence we define:

\begin{defn}\label{defn:coarse K-homology}
The coarse $K$-homology for a metric space $(X,d)$, denoted by $ KX_*(X) $, is defined to be the direct limit of the $ K $-homology of $ P_d(X) $, \emph{i.e.}, 
\[
KX_*(X):=\lim_{d\to\infty} K_*(P_d(X)).
\]
Passing the assembly map $\mu$ to direct limit as well, we obtain the \emph{coarse assembly map}:
\begin{equation}\label{EQ:coarse assembly}
\mu:\lim_{d\to\infty} K_*(P_d(X)) \longrightarrow \lim_{d\to\infty} K_*(\c(P_d(X)))\cong K_*(\c(X)).
\end{equation}
\end{defn}



\noindent\textbf{The coarse Baum-Connes conjecture.} If $X$ is a discrete metric space with bounded geometry, then the coarse assembly map $\mu$ defined in (\ref{EQ:coarse assembly}) is bijective.

\noindent\textbf{The coarse Novikov conjecture.} If $X$ is a discrete metric space with bounded geometry, then the coarse assembly map $\mu$ defined in (\ref{EQ:coarse assembly}) is injective.


The coarse Novikov conjecture provides a method to determine nonvanishing of the higher index of the Dirac operator on a noncompact complete Riemannian manifold, which has many significant applications in geometry and topology. In particular, it implies the Gromov positive scalar curvature conjecture and the zero-in-the spectrum conjecture (see \cite{HRY93, roe1993coarse, Roe96, Yu95, Yu00}).

\section{Strongly quasi-local algebras}\label{sec:sq algebras}

In \cite{quasi-local}, the authors introduced a new class of operator algebras for discrete metric spaces, called strongly quasi-local algebras. They sit between Roe algebras and quasi-local algebras, and are shown to be coarse invariants. In this section, we introduce strongly quasi-local algebras for general proper metric spaces and study their coarse geometric properties. This is the preparation to explore the strongly quasi-local perspective on coarse $K$-homology in the next section.


\subsection{Strong quasi-locality}

First, let us recall the notion of strong quasi-locality for operators over discrete metric spaces from \cite{quasi-local}. Recall that $\HH$ is the infinite-dimensional separable Hilbert space fixed at the very beginning, $\K=\K(\HH)$ is the $C^*$-algebra of compact operators on $\HH$, and $\K_1$ is the closed unit ball of $\K$.

Let $X$ be a proper \emph{discrete} metric space and $\H_X$ be an ample $X$-module. For each $x\in X$, denote $\H_x:= \chi_{ \{x\} } \H_X$. An operator $S\in \B(\H_X \otimes \HH)$ can be regarded as an $X$-by-$X$ matrix $(S_{xy})_{x,y\in X}$, where $S_{xy} \in \B(\H_{y} \otimes \HH , \H_{x} \otimes \HH)$. 

Recall that a map $g:X \rightarrow \K$ has \emph{$(\varepsilon,R)$-variation} if for any $x,y\in X$ with $d(x,y) < R$, we have $\|g(x)-g(y)\|< \varepsilon$. The map $g$ is \emph{bounded} if $\|g\|_\infty:=\sup_{x\in X}\|g(x)\|<\infty$. Given a bounded map $g:X \rightarrow \K$, we define an operator $\Lambda(g)\in \B(\H_X \otimes \HH)$ by setting its matrix entry as follows:
\begin{eqnarray}\label{EQ:Lambda discrete}
	\Lambda(g)_{xy}:=
	\begin{cases}
		~\Id_{\H_x} \otimes g(x), & y=x; \\ 
		~0, & \mbox{otherwise}.
	\end{cases}
\end{eqnarray}
Note that this is a block-diagonal operator with respect to the decomposition $\H_X=\bigoplus_{x\in X} \H_x$. 


\begin{defn}[{\cite[Definition 3.4]{quasi-local}}]\label{defn:strongly quasi-local algebra discrete case}
	Let $X$ be a proper \emph{discrete} metric space and $\H_X$ be an ample $X$-module. An operator $T\in \B(\H_X)$ is called \emph{strongly quasi-local} if for any $\varepsilon > 0$ there exist $\delta, R >0$ such that for any map $g:X \rightarrow \K_1$ with $(\delta, R)$-variation, we have $\| [T\otimes \Id_{\HH} , \Lambda(g) ] \|_{ \B (\H_X \otimes \HH)  }< \varepsilon$. The set of all locally compact strongly quasi-local operators on $\H_X$ forms a $C^*$-algebra, called the \emph{strongly quasi-local algebra of $\H_X$} and denoted by $C^*_{sq}(\H_X)$.
\end{defn}

It was shown in \cite{quasi-local} that $C^*_{sq}(\H_X)$ does not depend on the chosen module $\H_X$. Note that when $\H_X$ is chosen to be $\ell^2(X;\HH)$, then the above definition coincides with Definition \ref{introdefn:strong quasi-locality}.

Now we move to the case of proper metric spaces which are \emph{not} necessarily discrete. Our idea is inspired by Proposition \ref{prop:pictures of quasi-locality}(3) while we need the following notion of measurability for operator-valued maps, which can be found in standard textbooks on functional analysis (\emph{e.g.}, \cite[Section V.4]{Yos80}).

\begin{defn}\label{defn:measurable vector-valued function}
	Let $(X,d)$ be a proper metric space with the Borel $\sigma$-algeba and $g: X \to \K$ be a map. 
	\begin{enumerate}
		\item $g$ is said to be \emph{countably-valued} if there is a countable Borel partition $\{E_n\}_{n\in \N}$ of $X$ such that $ g|_{E_n}$ is constant, \emph{i.e.}, $g=\sum_{n\in \N} a_n \chi_{E_n}$ where $a_n \in \K$;
		\item $g$ is said to be \emph{strongly measurable}\footnote{Note that the classic notion of strong measurability is defined for maps on a measure space, where the word ``pointwise'' is replaced by ``almost everywhere''.} if there is a sequence of countably-valued maps $ {\{g_n\}}_{n\in\N} $ such that $g_n \to g$ pointwise;
		\item $g$ is said to be \emph{weakly measurable} if for any bounded linear functional $ \phi: \K \to \C$, the composition $\phi\circ g: X \to \C$ is a Borel function on $X$.
	\end{enumerate}
\end{defn}

The following result is essentially due to Pettis: 

\begin{prop}[\cite{Pet38}]\label{prop:Pettis thm}
	Let $(X,d)$ be a proper metric space with the Borel $\sigma$-algeba and $g: X \to \K$ be a map. Then the following are equivalent:
	\begin{enumerate}
		\item $g$ is strongly measurable;
		\item $g$ is weakly measurable;
		\item there is a sequence of countably-valued maps $ {\{g_n\}}_{n\in\N} $ such that $\|g_n - g\|_\infty \to 0$.
	\end{enumerate}
\end{prop}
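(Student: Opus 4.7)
The statement is the classical Pettis measurability theorem applied to the separable Banach space $\K$, and I would prove the three implications $(3) \Rightarrow (1) \Rightarrow (2) \Rightarrow (3)$. The first two are essentially formal. For $(3) \Rightarrow (1)$, uniform convergence is stronger than pointwise convergence. For $(1) \Rightarrow (2)$, each countably-valued $g_n = \sum_k a_k^{(n)} \chi_{E_k^{(n)}}$ satisfies $\phi \circ g_n = \sum_k \phi(a_k^{(n)}) \chi_{E_k^{(n)}}$, which is a Borel function (a countable sum of Borel simple functions), and since $\phi \circ g_n \to \phi \circ g$ pointwise, $\phi \circ g$ is Borel.

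The substance is $(2) \Rightarrow (3)$. The crucial input is that $\K = \K(\HH)$ is separable (because $\HH$ is). From separability of $\K$, the closed unit ball of the dual $\K^*$ is weak-$*$ metrizable and hence weak-$*$ separable, so I can pick a countable norming family $\{\phi_n\}_{n \in \N} \subseteq (\K^*)_1$ with the property that $\|y\| = \sup_n |\phi_n(y)|$ for every $y \in \K$ (a standard consequence of Hahn--Banach). For any fixed $a \in \K$, this identity gives
\[
\|g(x) - a\| \;=\; \sup_{n \in \N} \bigl|\phi_n(g(x)) - \phi_n(a)\bigr|,
\]
and weak measurability of $g$ makes each $x \mapsto \phi_n(g(x))$ Borel; hence $x \mapsto \|g(x) - a\|$ is Borel, and in particular the preimage $g^{-1}(B(a, r))$ of any open ball in $\K$ is Borel in $X$.

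Now fix a countable dense subset $\{a_k\}_{k \in \N}$ of $\K$, which exists by separability, and for each $n \in \N$ define the Borel sets
\[
E_k^{(n)} \;:=\; g^{-1}\bigl(B(a_k, 1/n)\bigr) \,\setminus\, \bigcup_{j<k} g^{-1}\bigl(B(a_j, 1/n)\bigr).
\]
Because $\{a_k\}$ is dense in $\K$, the sets $\{E_k^{(n)}\}_{k \in \N}$ form a Borel partition of $X$. Setting $g_n := \sum_{k} a_k \chi_{E_k^{(n)}}$ gives a countably-valued map with $\|g_n(x) - g(x)\| < 1/n$ for every $x \in X$, whence $\|g_n - g\|_\infty \le 1/n \to 0$, yielding condition $(3)$.

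The only genuine obstacle is the construction of the countable norming family in $(\K^*)_1$, which is precisely the step where the hypothesis that $\HH$ (and therefore $\K$) is separable is used decisively; once that is in hand, the Borel measurability of the distance-to-a-point functions and the standard nearest-point discretisation argument complete the proof with no further complications.
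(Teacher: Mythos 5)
Your proof is correct. The paper itself gives no proof of this proposition — it simply cites Pettis's original work \cite{Pet38} — so there is no internal argument to compare against; your write-up is essentially the standard textbook proof of the Pettis measurability theorem specialised to the separable target $\K = \K(\HH)$, where the "essentially separably valued" hypothesis of the general theorem becomes automatic. The one point worth flagging for the record, which you handle correctly but tersely, is the justification that a countable norming family exists: this follows either, as you say, from weak-$*$ separability of $(\K^*)_1$, or more directly by applying Hahn--Banach to a countable dense subset $\{a_k\}$ of $\K$ to get $\phi_k \in (\K^*)_1$ with $\phi_k(a_k) = \|a_k\|$, and then noting $\sup_k |\phi_k(y)| = \|y\|$ by density. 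Either route gives the Borel measurability of $x \mapsto \|g(x) - a\|$, after which your nearest-point discretisation produces the desired countably-valued approximants with $\|g_n - g\|_\infty \le 1/n$.
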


A key ingredient for strong quasi-locality is to define the operator $\Lambda(g)$ for ``well-behaved'' operator-valued maps $g: X \to \K$ as in (\ref{EQ:Lambda discrete}). Let us start with the case of countably-valued maps.


\begin{defn}\label{defn:definition of Lambda countably valued}
	Let $(X,d)$ be a proper metric space and $\H_X$ an ample $X$-module. Let $g: X \to \K$ be a bounded countably-valued map, \emph{i.e.}, $g=\sum_{n\in \N} a_n \chi_{E_n}$ where $\{E_n\}_n$ is a Borel partition of $X$ and $a_n \in \K$ with $\sup_{n\in \N} \|a_n\| < \infty$. We decompose $\H_X = \bigoplus_{n} \chi_{E_n} \H_X$ and define $\Lambda(g) \in \prod_{n\in \N}\B\big((\chi_{E_n} \H_X) \otimes \HH\big) \subset \B(\H_X \otimes \HH)$ by
	\[
	\Lambda(g) := (\mathrm{SOT})-\sum_{n\in \N} \chi_{E_n} \otimes a_n.
	\]
	In other words, $\Lambda(g)$ is block-diagonal with respect to the decomposition $\H_X \otimes \HH = \bigoplus_{n} \big( (\chi_{E_n} \H_X) \otimes \HH \big)$ and coincides with $\Id_{\chi_{E_n} \H_X} \otimes a_n$ on each block.
\end{defn}

Now we consider a bounded strongly measurable map $g: X \to \K$. By Definition \ref{defn:measurable vector-valued function} and Proposition \ref{prop:Pettis thm}, there exists a sequence of countably-valued maps $g_n$ such that $\|g_n - g\|_\infty \to 0$. Note that for each $n,m \in \N$, the map $g_n - g_m$ is again countably-valued, hence we have $\|\Lambda(g_n) - \Lambda(g_m)\| = \|g_n - g_m\|_\infty$. Therefore the sequence $\{\Lambda(g_n)\}_{n\in \N}$ is a Cauchy sequence, which converges to some $ T\in\B(\H_X\otimes\HH) $. By a standard argument, it is easy to see that the operator $T$ is independent of the approximating sequence $\{g_n\}_{n\in \N}$. This leads to the following:

\begin{defn}\label{defn:definition of Lambda general case}
	Let $(X,d)$ be a proper metric space and $\H_X$ an ample $X$-module. Let $g: X \to \K$ be a bounded strongly measurable map, and take a sequence of countably-valued maps $\{g_n\}_{n\in \N}$ such that $\|g_n - g\|_\infty \to 0$. We define $\Lambda(g)\in\B(\H_X\otimes\H_0)$ by $\Lambda(g):=\lim_{n\to\infty}\Lambda(g_n)$ (which is well-defined due to the above explanation). Note that we also have $\|\Lambda(g)\| = \|g\|_\infty$.
\end{defn}



%
%

Now we are in the position to introduce strong quasi-locality for operators on proper metric spaces:


\begin{defn}\label{defn:strong quasi-locality}
	Let $(X,d)$ be a proper metric space and $ \H_X $ be an ample $X$-module. An operator $ T\in\B(\H_X) $ is said to be \emph{strongly quasi-local} if for any $ \epsilon>0 $, there exist $\delta,R>0$ such that for any strongly measurable map $ g: X \to \K_1 $ with $ (\delta,R) $-variation (where $\K_1$ denotes the closed unit ball of $\K(\HH)$), we have:
	\[ 
	\big\|[T\otimes\Id_{\HH},\Lambda(g)]\big\|_{\B(\H_X \otimes \HH)}<\epsilon.
	\] 
	It is easy to see that the set of all locally compact strongly quasi-local operators on $\H_X$ forms a $C^*$-algebra, called the \emph{strongly quasi-local algebra of $\H_X$} and denoted by $C^*_{sq}(\H_X)$.
\end{defn}

%

It is straightforward to see that when $X$ is discrete, the above definition coincides with \cite[Definition 3.4]{quasi-local}. Analogous to Roe and quasi-local algebras, it is also easy to see that the algebra $C^*_{sq}(\H_X)$ is \emph{stable} in the sense that $ \cs(\H_X) \otimes M_n \cong \cs(\H_X) $ for any $n\in \N$.

\subsection{Properties of strongly quasi-local algebras}

Recall that for a proper discrete metric space, it was shown in \cite[Proposition 3.7]{quasi-local} that the strongly quasi-local algebra sits between the Roe algebra and the quasi-local algebra. Here we prove a similar result for the general case.

\begin{prop}\label{prop:relations between Roe, QL and SQL}
Let $X$ be a proper metric space and $\H_X$ be an ample $X$-module. Then we have:
\begin{enumerate}
  \item $C^*_{sq}(\H_X) \subseteq C^*_q(\H_X)$;
  \item If $X$ has bounded geometry, then $C^*(\H_X) \subseteq C^*_{sq}(\H_X)$;
  \item If $X$ has bounded geometry and Property A, then $C^*(\H_X) = C^*_{sq}(\H_X) = C^*_q(\H_X)$.
\end{enumerate}
\end{prop}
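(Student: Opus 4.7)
My plan is to prove the three containments in sequence. For (1), I would reduce the scalar characterisation of quasi-locality in Proposition \ref{prop:pictures of quasi-locality}(3) to the $\K$-valued condition of strong quasi-locality by a rank-one projection trick. For (2), I would adapt the classical argument that finite propagation implies quasi-locality, partitioning $X$ into small Borel sets and using bounded geometry to control the resulting ``$S$-neighbour'' combinatorics. Part (3) will then drop out from (1), (2), and the known equality $C^*(\H_X) = C^*_q(\H_X)$ under Property A proved in \cite{SZ20}.

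For (1), I would fix a rank-one projection $p \in \K_1$ and, given a bounded Borel function $g : X \to \C$ with $\|g\|_\infty \le 1$, define $\tilde g := g \cdot p : X \to \K_1$. Since $g$ is a uniform limit of simple Borel functions, $\tilde g$ is strongly measurable in the sense of Definition \ref{defn:measurable vector-valued function}, has the same $(\delta, R)$-variation as $g$, and a short computation with Definition \ref{defn:definition of Lambda general case} identifies $\Lambda(\tilde g)$ with $M_g \otimes p$, where $M_g$ is the Borel extension of multiplication by $g$ through the $X$-module representation on $\H_X$. Consequently
\[
\big\|[T \otimes \Id_{\HH}, \Lambda(\tilde g)]\big\| = \big\|[T, M_g] \otimes p\big\| = \big\|[T, M_g]\big\|,
\]
so feeding the defining $\delta, R$ of strong quasi-locality into Proposition \ref{prop:pictures of quasi-locality}(3) shows that $T$, which is locally compact by hypothesis, lies in $C^*_q(\H_X)$.

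For (2), I would start with $T \in \C[\H_X]$ of propagation at most $S$ and $\varepsilon > 0$, set $R := 3S$, fix a Borel partition $\{E_i\}$ of $X$ with $\diam(E_i) < S$, pick representatives $x_i \in E_i$, and approximate any strongly measurable $g : X \to \K_1$ of $(\delta, R)$-variation by the countably-valued $\tilde g := \sum_i g(x_i) \chi_{E_i}$. Then $\|\Lambda(g) - \Lambda(\tilde g)\| < \delta$ by Definition \ref{defn:definition of Lambda general case}, and
\[
\big[T \otimes \Id_{\HH}, \Lambda(\tilde g)\big] = \sum_{i,\, j\, :\, d(E_i, E_j) \le S} \chi_{E_i} T \chi_{E_j} \otimes \big(g(x_j) - g(x_i)\big),
\]
with every scalar factor of norm less than $\delta$ because $d(x_i, x_j) < 3S = R$. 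Bounded geometry supplies a uniform bound $N$ on the number of indices $j$ adjacent to any $i$, so an edge-colouring of the conflict graph $\{(i,j) \sim (i', j') : i = i' \text{ or } j = j'\}$ partitions the sum into finitely many matchings; within each matching the summands have mutually orthogonal row and column supports, so their partial sum has norm at most $\|T\|\delta$, and adding back the approximation error shows the commutator has norm $O(\delta)$. Choosing $\delta$ small enough makes this less than $\varepsilon$.

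Part (3) is then immediate from the chain $C^*(\H_X) \subseteq C^*_{sq}(\H_X) \subseteq C^*_q(\H_X) = C^*(\H_X)$. I expect the main obstacle to be the combinatorial step in (2): one must verify that a Borel partition with the prescribed mesh actually exists on a general proper metric space and that the $S$-neighbour combinatorics of such a partition is controlled by a constant depending only on the bounded-geometry parameters of $X$ (rather than on the partition). The natural route is to start from a uniformly discrete net in $X$ and build the partition from Voronoi-type cells around it, which is where the discreteness assumption in the original \cite{quasi-local} argument is effectively replaced.
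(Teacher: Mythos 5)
Your proof follows the paper's route in all three parts: the rank-one projection trick identifying $\Lambda(g\cdot p)$ with $M_g\otimes p$ for (1), a Borel partition of controlled mesh together with a countably-valued approximation $\tilde g$ of $g$ for (2), and the chain through \cite{SZ20} for (3). One small remark in your favour: in (2) the paper decomposes $T=\sum_{i=1}^N T_i$ by a bare enumeration $k_{1,n},\dots,k_{N,n}$ and then bounds $\big\|[T_i\otimes\Id_{\HH},\Lambda(\tilde g)]\big\|$ by $\sup_n\|\chi_{E_{k_{i,n}}}T\chi_{E_n}\|\cdot\delta$, an estimate that actually needs each $n\mapsto k_{i,n}$ to be injective (so that both row and column projections are orthogonal) — your explicit matching/colouring step (a K\"onig-type decomposition of the bounded-degree bipartite adjacency graph) supplies exactly this and thus closes a small gap the paper glosses over.
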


\begin{proof}
To see (1), we fix a rank-one projection $p \in \B(\HH)$. For any Borel function $g:X \to \C$ with $\|g\|_\infty =1$, define $\tilde{g}: X \rightarrow \K(\HH)_1$ by $\tilde{g}(x):=g(x)p$. Note that $[T\otimes \Id_{\HH} , \Lambda(\tilde{g})] = [T,g] \otimes p$. Hence the conclusion follows from the definition of strong quasi-locality together with Proposition \ref{prop:pictures of quasi-locality}(3).
 
 It remains to prove (2), since (3) follows from \cite[Theorem 3.3]{SZ20} together with (1) and (2). 	To see (2), we take an operator  $T \in \C[\H_X]$ with propagation $R_0$. Since $X$ has bounded geometry, there exists a Borel partition $\{E_n\}_{n\in \N}$ of $X$ satisfying:
\begin{itemize}
  \item there exists $C>0$ such that $\diam(E_n) \leq C$;
  \item there exists $N \in \N$ such that $\# \{m: d(E_n, E_m) \leq R_0\} \leq N$ for each $n\in \N$.
\end{itemize}
We take an $x_n \in E_n$ for each $n\in \N$.
 
Since $T$ has propagation $R_0$, we know that for each $n\in \N$ there are at most $N$-many $k$'s such that $\chi_{E_{k}} T \chi_{E_n} \neq 0$. We enumerate such $k$'s by $k_{1,n}, k_{2,n}, \cdots, k_{N,n}$. For each $i=1,2,\cdots, N$, we denote
\[
T_i=\sum_{n=1}^\infty \chi_{E_{k_{i,n}}}T\chi_{E_n}.
\]
Then it is clear that $T = \sum_{i=1}^N T_i$. 

Now for any $\epsilon>0$, take $\delta:=\frac{\epsilon}{4N\|T\|}$ and $R:=R_0 + 2C$. Then for any strongly measurable map $g: X \to \K_1$ with $(\delta, R)$-variation, we consider a countably-valued map $\tilde{g}: X \to \K_1$ defined by $\tilde{g}|_{E_n}:= g(x_n)$ for each $n\in \N$. Then $\|g-\tilde{g}\|_\infty \leq \delta$, which implies
\[
 \big\|[T \otimes\Id_\HH,\Lambda(g)]\big\| \leq 2\|T\|\delta + \big\|[T\otimes\Id_\HH,\Lambda(\tilde{g})]\big\|.
\]
On the other hand for each $i=1,2,\cdots,N$, we have
\[
\big\|[T_i\otimes\Id_\HH,\Lambda(\tilde{g})]\big\| = \big\| \sum_{n=1}^\infty (\chi_{E_{k_{i,n}}}T\chi_{E_n}) \otimes (g(x_n) - g(x_{k_{i,n}})) \big\| \leq \sup_{n\in \N} \|\chi_{E_{k_{i,n}}}T\chi_{E_n}\| \cdot \delta \leq \frac{\epsilon}{2N}.
\]
Therefore we obtain:
\[
\big\|[T \otimes\Id_\HH,\Lambda(g)]\big\| \leq \frac{\epsilon}{2} + \sum_{i=1}^N \big\|[T_i\otimes\Id_\HH,\Lambda(\tilde{g})]\big\| \leq \frac{\epsilon}{2} + \frac{\epsilon}{2}  = \epsilon,
\]
which concludes the proof.
\end{proof}

Our next goal is to explore the coarse invariance of strongly quasi-local algebras. Note that a similar result hold in the discrete case as shown in \cite[Section3.3]{quasi-local}. While the proof is similar but more involved, we include a detailed one here for completeness.

\begin{prop}\label{prop:strong coarse invariance}
Let $X,Y$ be proper metric spaces with bounded geometry and $\H_X, \H_Y$ be ample modules for $X$ and $Y$, respectively.  Let $f: X \to Y$ be a coarse map with a covering isometry $V: \H_X\rightarrow \H_Y$. Then $V$ induces the following $\ast$-homomorphism
	\[
	\Ad_V: C^*_{sq}(\H_X)\longrightarrow C^*_{sq}(\H_Y), T\mapsto VTV^*.
	\]
Furthermore, the induced K-thoeretic map $(\Ad_V)_*: K_*(C^*_{sq}(\H_X)) \to K_*(C^*_{sq}(\H_Y))$ does not depend on the choice of the covering isometry $V$, hence denoted by $f_*$.
\end{prop}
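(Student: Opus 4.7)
The plan is to prove the two assertions separately, with the main content being the first.

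\textbf{Part 1: $\Ad_V$ maps $C^*_{sq}(\H_X)$ into $C^*_{sq}(\H_Y)$.} Since $V^*V = \Id$, the map $\Ad_V$ is a $\ast$-homomorphism from $\B(\H_X)$ to $\B(\H_Y)$. Local compactness of $VTV^*$ follows by a routine covering-isometry argument: for $\phi \in C_c(Y)$, the operator $\phi V$ has columns supported in the bounded set $f^{-1}(\Nd_S(\supp\phi))$, so $\phi V T V^*$ is compact. The heart of the matter is verifying strong quasi-locality. Setting $W := V \otimes \Id_\HH$ and $\tilde T := T \otimes \Id_\HH$, a direct manipulation using $W^*W = \Id$ yields
\begin{align*}
[W\tilde T W^*, \Lambda(g')] = W\bigl[\tilde T,\, W^*\Lambda(g')W\bigr]W^* + W\tilde T W^*\Lambda(g')(\Id - WW^*) - (\Id - WW^*)\Lambda(g')W\tilde T W^*.
\end{align*}
Using the identity $(\Id - WW^*)W = 0$, each error term can be rewritten so that its operator norm is bounded by $\|T\| \cdot \|[\Lambda(g'), WW^*]\|$.

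\textbf{Key estimates and the main obstacle.} Given $\epsilon > 0$, pick $\delta_0, R_0$ from the strong quasi-locality of $T$. We choose parameters $\delta', R'$ (depending on $\epsilon$, the expansion function $\rho_+$ of $f$, the covering radius $S$ of $V$, and bounded-geometry constants of $X, Y$) so that for any strongly measurable $g' : Y \to \K_1$ with $(\delta', R')$-variation, two things hold: (a) a strongly measurable pullback $\tilde g : X \to \K_1$ with $(\delta_0, R_0)$-variation can be built by first approximating $g'$ via Proposition \ref{prop:Pettis thm} by a countably-valued map $g'_0 = \sum_n a_n \chi_{E_n}$ adapted to a Borel partition of $Y$ with small diameter, then setting $\tilde g := \sum_n a_n \chi_{f^{-1}(E_n)}$; (b) both $\|W^*\Lambda(g')W - \Lambda(\tilde g)\|$ and $\|[\Lambda(g'), WW^*]\|$ are less than $\epsilon/(3(1 + \|T\|))$. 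Combining (a), (b) with the decomposition above yields $\|[W\tilde T W^*, \Lambda(g')]\| < \epsilon$, completing the verification. The main obstacle is (b): the operator $VV^*$ has propagation at most $2S$ on $\H_Y$ (since $V$ has propagation at most $S$), so each $[\chi_{E_n}, VV^*]$ is supported in a $2S$-collar of $E_n$; small variation of $g'$ across these collars, combined with a coloring argument controlling overlaps via bounded geometry of $Y$, delivers the operator-norm bound on the tensor sum $\sum_n [\chi_{E_n}, VV^*] \otimes a_n$. Similar boundary analysis compares $W^*\Lambda(g'_0)W = \sum_n (V^*\chi_{E_n}V) \otimes a_n$ with $\Lambda(\tilde g) = \sum_n \chi_{f^{-1}(E_n)} \otimes a_n$ by expressing their difference in terms of the boundary regions where $V^*\chi_{E_n}V$ and $\chi_{f^{-1}(E_n)}$ disagree.

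\textbf{Part 2: $K$-theoretic independence.} We apply the standard rotation trick. Given two covering isometries $V_1, V_2 : \H_X \to \H_Y$ of $f$ with covering radii $S_1, S_2$, define a path of isometries
\[
\tilde V(\theta) : \H_X \to \H_Y \oplus \H_Y, \qquad \tilde V(\theta)\xi := \bigl(\cos\theta \cdot V_1 \xi,\ \sin\theta \cdot V_2 \xi\bigr), \quad \theta \in [0, \pi/2].
\]
Each $\tilde V(\theta)$ is an isometry covering $f$ with covering radius $\max\{S_1, S_2\}$, so by Part 1, $\Ad_{\tilde V(\theta)}$ defines a norm-continuous path of $\ast$-homomorphisms $C^*_{sq}(\H_X) \to C^*_{sq}(\H_Y \oplus \H_Y) \cong M_2(C^*_{sq}(\H_Y))$. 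Hence $\Ad_{\tilde V(0)}$ and $\Ad_{\tilde V(\pi/2)}$ induce the same map on $K$-theory; these are respectively the upper-left corner embedding of $\Ad_{V_1}$ and the lower-right corner embedding of $\Ad_{V_2}$ into $M_2(C^*_{sq}(\H_Y))$, and both corner embeddings realize the canonical isomorphism $K_*(C^*_{sq}(\H_Y)) \cong K_*(M_2(C^*_{sq}(\H_Y)))$. We conclude $(\Ad_{V_1})_* = (\Ad_{V_2})_*$.
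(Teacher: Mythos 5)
Your proposal is correct but takes a genuinely different route from the paper's proof. The paper decomposes the covering isometry $V = W_1 + \cdots + W_N$ into $N$ partial pieces (using bounded geometry of $Y$) chosen so that for each $i$ one has the \emph{exact} intertwining relation $\Lambda(\tilde{g})(W_i\otimes\Id) = (W_i\otimes\Id)\Lambda(\varphi_i)$ for an explicit pullback $\varphi_i$, and then replaces $\varphi_i$ by $\tilde{g}\circ f$ at the cost of a small-variation error. You instead use the soft algebraic identity $[W\tilde{T}W^*,\Lambda(g')] = W[\tilde{T},W^*\Lambda(g')W]W^* + W\tilde{T}W^*\Lambda(g')(\Id-WW^*) - (\Id-WW^*)\Lambda(g')W\tilde{T}W^*$ (and the identities $W^*\Lambda(g')(\Id-WW^*)=-W^*[\Lambda(g'),WW^*]$, $(\Id-WW^*)\Lambda(g')W=[\Lambda(g'),WW^*]W$, which you correctly invoke), isolating the conjugation defect in the single quantity $[\Lambda(g'),WW^*]$. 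This is a cleaner way to organize the proof, since it reduces strong quasi-locality of $VTV^*$ to two self-contained estimates: smallness of $[\Lambda(g'),WW^*]$ and proximity of $W^*\Lambda(g')W$ to $\Lambda(\tilde g)$. However, both of those hidden estimates, once unwound, turn into the same sort of finite-degree Schur bound on $\sum_{n,m}\chi_{E_n}VV^*\chi_{E_m}\otimes(a_n-a_m)$ that the paper's $W_i$-decomposition is designed to handle head-on; the ``coloring argument'' you gesture at is precisely this and must be carried out to complete the proof. Two small remarks to tighten up: (i) as the paper notes, you should assume (WLOG) that $f$ is Borel, so that the pullback $\tilde{g}=\sum_n a_n\chi_{f^{-1}(E_n)}$ is strongly measurable; (ii) invoking Pettis to produce the countably-valued approximation $g_0'$ is an unnecessary detour -- the small-variation hypothesis already gives $\|g'-g_0'\|_\infty\le\delta'$ directly once you set $a_n:=g'(\gamma_n)$ for representative points $\gamma_n\in E_n$ of a bounded-diameter Borel partition. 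Your Part 2 (the $2\times2$ rotation homotopy $\tilde{V}(\theta)=(\cos\theta\,V_1,\sin\theta\,V_2)$) is the standard argument and matches what the paper alludes to.
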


\begin{proof}
Without loss of generality, we may assume that the coarse map $ f $ is a Borel map (see \cite[Lemma A.3.12]{willett2020higher}). We only show that $VTV^* \in C^*_{sq}(\H_Y)$ if $T\in C^*_{sq}(\H_X)$. The ``Furthermore'' part follows a similar argument as in the case of Roe algebra, hence omitted. 

First note that $VTV^*$ is locally compact since $T$ is, which follows the same argument as in the discrete case (see \cite[Proposition 2.9]{quasi-local}). To see that $VTV^*$ is strongly quasi-local, we assume that $\supp (V) \subseteq \{(y,x):d_Y(f(x),y)<R_0\}$ for some $R_0>0$. Since $ Y $ has bounded geometry, there exists a Borel partition $\{E_n\}_{n\in \N}$ of $Y$ satisfying:
\begin{itemize}
  \item there exists $C>0$ such that $\diam(E_n) \leq C$;
  \item there exists $N\in \N$ such that $\# \{m: d_Y(E_n, E_m) \leq R_0\} \leq N$ for each $n\in \N$.
\end{itemize}
We take an $\gamma_n \in E_n$ for each $n\in \N$. Note that for each $n,m\in \N$, $\chi_{E_m}V\chi_{f^{-1}(E_n)} \neq 0$ implies that $d(E_m, E_n) \leq R_0$. Hence for each $n\in \N$, there are at most $N$-many $m$'s such that $\chi_{E_m}V\chi_{f^{-1}(E_n)} \neq 0$. Enumerate such $m$ by $m_{1,n}, m_{2,n}, \cdots,m_{N,n}$. Hence $V$ can be decomposed into:
\[
V=W_1+W_2+\cdots+W_N \quad \mbox{where} \quad W_i=\sum_{n\in \N}\chi_{E_{m_{i,n}}}V\chi_{f^{-1}(E_n)}.
\]

Let $ M=\max_{1 \leq i \leq N}\|W_i\| $ and $\rho_+: [0,\infty) \to  [0,\infty)$ be the control function of $f$, \emph{i.e.}, $d_Y(f(x),f(y)) \leq \rho_+(d_X(x,y))$ for any $x,y\in X$.
For any $ \epsilon>0 $, $T$ being strongly quasi-local implies that there exist $\delta',R'$ such that for any strongly measurable function $ h:X\to\K_1 $ with $ (\delta',R') $-variation, we have $ \|[T\otimes\Id_\HH,\Lambda(h)]\|<\frac{\epsilon}{2N^2M^2} $.

We take $ \delta=\min\{\frac{\epsilon}{8\|T\|},\frac{\epsilon}{8M^2N^2\|T\|},\frac{\delta'}{2}\} $ and $ R=2C+R_0+\rho_+(R') $. Let $ g:Y \to \K_1 $ be a strongly measurable map with $ (\delta,R) $-variation. Define a countably-valued map $\tilde{g}: Y \to \K_1$ by setting $\tilde{g}|_{E_n}:=g(\gamma_n)$, then it is clear that $\|g-\tilde{g}\|_\infty \leq \delta$. Hence:
\[
\big\|[VTV^*\otimes\Id_\HH,\Lambda(g)]\big\| \leq 2\|T\|\delta + \big\|[VTV^*\otimes\Id_\HH,\Lambda(\tilde{g})]\big\|.
\]


For each $i=1,2,\cdots,N$, define a subset $D_i\subseteq X$ by
\[
D_i:=\{x\in X: \mbox{~there~exists~}y\in Y\mbox{~such~that~}(y,x) \in \supp(W_i)\}
\]
and a map $t_i: D_i \to Y$ by setting $t_i|_{D_i \cap f^{-1}(E_n)}:=\gamma_{m_{i,n}}$ for each $n\in \N$. 
Define a map  $ \varphi_i: X \to \K_1$ by
\[
 \varphi_i(x)=\begin{cases}
			\tilde{g}(t_i(x)) & \text{if } x\in D_i;\\
			0 & \text{if } x\notin D_i.
		\end{cases}
\]
It follows that 
\[
\Lambda(\tilde{g})(W_i \otimes \Id_{\HH}) = (W_i \otimes \Id_{\HH})\Lambda(\varphi_i) \quad \mbox{and} \quad (W_i^* \otimes \Id_{\HH})\Lambda(g) = \Lambda(\varphi_i)(W_i^*\otimes \Id_{\HH}).
\]
On the other hand, for any $x\in D_i \cap f^{-1}(E_n)$ we have 
\[
d_Y(t_i(x),f(x)) \leq  d_Y(\gamma_{m_{i,n}}, E_n) + C \leq R_0 + 2C.
\]
Hence we obtain
\[
\sup_{x\in D_i}\|\varphi_i(x)-\tilde{g}(f(x))\| \leq \delta,
\]
which implies 
\[
\big\|(W_i\otimes\Id_\HH)\Lambda(\varphi_i-\tilde{g}\circ f)\big\|\le\delta M \quad \mbox{~and~} \quad  \big\|\Lambda(\varphi_i-\tilde{g}\circ f)(W^*_i\otimes\Id_\HH)\big\|\le\delta M.
\]
Therefore, for $i,j\in \{1,2,\cdots,N\}$ we have
\begin{eqnarray*}
&&\big\|[(W_iTW_j^*)\otimes \Id_{\HH}, \Lambda(\tilde{g})] \big\| \\
&=& \big\|\big((W_iT)\otimes \Id_{\HH}\big) \Lambda(\varphi_j) (W_j^*\otimes \Id_{\HH}) - (W_i\otimes \Id_{\HH}) \Lambda(\varphi_i) \big((TW_j^*)\otimes \Id_{\HH}\big)\big\| \\
&\leq & \big\|\big((W_iT)\otimes \Id_{\HH}\big) \Lambda(\tilde{g}\circ f) (W_j^*\otimes \Id_{\HH}) - (W_i\otimes \Id_{\HH}) \Lambda(\tilde{g}\circ f) \big((TW_j^*)\otimes \Id_{\HH}\big)\big\| + 2M^2\|T\|\delta\\
&\leq & \big\|(W_i\otimes \Id_{\HH}) [T\otimes \Id_{\HH}, \Lambda(\tilde{g}\circ f)](W_j^*\otimes \Id_{\HH})\big\| + \frac{\varepsilon}{4N^2}.
\end{eqnarray*}

Note that $ \tilde{g} $ has $ (2\delta,R) $-variation on $ Y $, hence $ \tilde{g}\circ f $ has $ (\delta',R') $-variation. This implies that for $i,j\in \{1,2,\cdots,N\}$, we have
\[ 
\big\|(W_i\otimes\Id_\HH)[T\otimes\Id_\HH,\Lambda(\tilde{g}\circ f)](W_j^*\otimes\Id_\HH)\big\|\le M^2\cdot\frac{\epsilon}{2N^2M^2}=\frac{\epsilon}{2N^2}.
\]
Combining the above together, we obtain
\begin{align*}
\big\|[VTV^*\otimes\Id_\HH,\Lambda(g)]\big\| & \leq \frac{\epsilon}{4} + \big\|[VTV^*\otimes\Id_\HH,\Lambda(\tilde{g})]\big\| \\
& \leq \frac{\epsilon}{4}  + \sum_{i,j=1}^N (\frac{\epsilon}{2N^2} + \frac{\varepsilon}{4N^2}) = \epsilon.
\end{align*}
Hence we conclude the proof.
\end{proof}

As a direct corollary, we obtain:

\begin{cor}\label{cor:strong coarse invariance}
Let $\H_X$ and $\H_Y$ be ample modules for proper metric spaces $X$ and $Y$ of bounded geometry, respectively. If $X$ and $Y$ are coarsely equivalent, then the strongly quasi-local algebra $C^*_{sq}(\H_X)$ is $\ast$-isomorphic to $C^*_{sq}(\H_Y)$. In particular, for a proper metric space $X$ of bounded geometry the strongly quasi-local algebra $C^*_{sq}(\H_X)$ does not depend on the chosen ample $X$-module $\H_X$ up to $\ast$-isomorphisms, hence called \emph{the strongly quasi-local algebra of $X$} and denoted by $C^*_{sq}(X)$.
\end{cor}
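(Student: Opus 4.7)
The plan is to derive this essentially formally from Proposition \ref{prop:strong coarse invariance}. Given a coarse equivalence between $X$ and $Y$, I would fix coarse maps $f: X \to Y$ and $g: Y \to X$ whose compositions $f \circ g$ and $g \circ f$ are close to $\Id_Y$ and $\Id_X$, respectively. As in the proof of Proposition \ref{prop:strong coarse invariance}, I may assume $f$ and $g$ are Borel via \cite[Lemma A.3.12]{willett2020higher}, and construct covering isometries $V: \H_X \to \H_Y$ for $f$ and $W: \H_Y \to \H_X$ for $g$ by the standard partition-of-unity construction available on bounded-geometry spaces equipped with ample modules.

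Applying Proposition \ref{prop:strong coarse invariance} twice yields $*$-homomorphisms $\Ad_V: \cs(\H_X) \to \cs(\H_Y)$ and $\Ad_W: \cs(\H_Y) \to \cs(\H_X)$. Their compositions $\Ad_V \circ \Ad_W$ and $\Ad_W \circ \Ad_V$ are implemented by the isometries $VW \in \B(\H_Y)$ and $WV \in \B(\H_X)$, each of which has finite propagation since it covers a Borel map close to the respective identity. To upgrade these into mutually inverse $*$-isomorphisms I would invoke a Voiculescu-type absorption: since $\H_X \oplus \H_X$ is again an ample $X$-module and since $\cs$ is stable (as remarked just after Definition \ref{defn:strong quasi-locality}), one can replace $V$ and $W$ by unitary covering isometries after enlarging the modules, without changing the strongly quasi-local algebras up to canonical $*$-isomorphism. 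Once $V$ and $W$ are unitaries, $\Ad_V$ and $\Ad_W$ are manifestly mutually inverse $*$-isomorphisms between $\cs(\H_X)$ and $\cs(\H_Y)$.

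For the ``in particular'' statement, I would apply the first part with $X = Y$ and $f = g = \Id_X$ between two arbitrary ample $X$-modules. The main obstacle in the above plan is the absorption step that turns an isometric covering into a unitary one; this is routine but requires invoking Voiculescu-type arguments in order to avoid the subtlety that a generic covering isometry need not be surjective, so $\Ad_V$ need not be surjective a priori. Everything else is an immediate consequence of Proposition \ref{prop:strong coarse invariance} and mirrors standard arguments from the Roe algebra literature as well as the discrete-space case treated in \cite{quasi-local}.
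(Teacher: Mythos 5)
Your proposal is correct and follows the same route the paper implicitly has in mind when it writes ``As a direct corollary.'' The one load-bearing input beyond Proposition~\ref{prop:strong coarse invariance} is exactly the one you identify: a coarse equivalence between proper metric spaces with ample modules admits a covering \emph{unitary}, so one can take $W = V^*$ and then $\Ad_V$, $\Ad_{V^*}$ are mutually inverse $*$-isomorphisms that, by Proposition~\ref{prop:strong coarse invariance}, restrict to $\cs(\H_X)\leftrightarrow\cs(\H_Y)$. Two small remarks on the write-up: the detour through stability of $\cs$ and through $\H_X\oplus\H_X$ is not needed --- the existence of a covering unitary is a statement purely about ample modules over proper spaces and is standard (see, e.g., the treatment of covering isometries for coarse equivalences in \cite{willett2020higher}); and the observation that $VW$ and $WV$ have finite propagation is true but does not by itself give invertibility of $\Ad_V$, so you are right to pass to the unitary covering rather than trying to argue directly with $\Ad_V\circ\Ad_W$.
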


\section{Localisation algebras for (strong) quasi-locality}\label{sec:localisation}

In this section, we use the notion of strong quasi-locality for proper metric spaces introduced in Section \ref{sec:sq algebras} to provide an alternative version of $K$-homology. Recall that Yu introduced the localisation algebra for a proper metric space \cite{Yu97} and showed that its $K$-theory is isomorphic to the $K$-homology of the underlying space (see also \cite{QR10}). Our idea is to provide a (strongly) quasi-local version of Yu's localisation algebra and show that they have the same $K$-theory (Theorem \ref{introthm:localisation}).

First let us recall Yu's localisation algebra.


\begin{defn}[\cite{Yu97}]\label{defn:localisation Roe algebra}
Let $(X,d)$ be a proper metric space and $\H_X$ be an ample $X$-module. The \emph{localisation algebra $ \L(\H_X) $} is defined to be the norm closure of the $\ast$-algebra of all bounded uniformly continuous map $T: [0,\infty) \to \c(\H_X) $ satisfying $ \lim_{t\to\infty}\pro(T_t)\to 0$ (here we write $T_t$ instead of $T(t)$), with respect to the supremum norm $\|T\|_\infty:=\sup_{t\in [0,\infty)} \|T_t\|$. It is shown in \cite{Yu97} that $\L(\H_X)$ is independent of the module $\H_X$, hence denoted by $\L(X)$. 
\end{defn}

The evaluation map $e: C^*_L(X) \to C^*(X)$ is defined by $e(g)=g(0)$ for $g\in C^*_L(X)$. One can also define a local assembly map $\mu_L$ \cite{Yu97} from the $K$-homology of $X$ to the $K$-theory of the localisation algebra:
\[
\mu_L: K_\ast(X) \longrightarrow K_\ast(C^*_L(X)).
\]

\begin{prop}[\cite{QR10, Yu97}]\label{prop:loc iso to K-homology}
Let $X$ be a proper metric space. Then the local assembly map $\mu_L$ is an isomorphism.
\end{prop}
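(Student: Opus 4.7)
The plan is to treat both $K_\ast(-)$ and $K_\ast(C^*_L(-))$ as homology-type functors on proper metric spaces and to argue that $\mu_L$ is a natural transformation which is an isomorphism on simple building blocks, with the general case following by a Mayer--Vietoris argument. First, I would reduce to the case where $X$ is a finite-dimensional, locally finite simplicial complex; for a general proper metric space this uses exhaustion by compact subsets together with continuity of both functors under direct limits along proper inclusions, and naturality of $\mu_L$ with respect to such inclusions.

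Next I would establish a six-term Mayer--Vietoris exact sequence for $K_\ast(C^*_L(-))$ associated with a decomposition $X = A \cup B$ into closed subcomplexes. The corresponding sequence for $K$-homology is classical, and $\mu_L$ is manifestly natural under restriction, so it intertwines the two sequences. The essential technical input is that for any $T = (T_t)_{t \in [0, \infty)} \in C^*_L(X)$ the condition $\pro(T_t) \to 0$ yields $\lim_{t \to \infty} \|[\varphi_A, T_t]\| = 0$ for any uniformly continuous partition of unity $\{\varphi_A, \varphi_B\}$ subordinate to $\{A, B\}$, so one can cut $T$ asymptotically into pieces supported near $A$ and $B$ without introducing errors at infinity. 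Producing this Mayer--Vietoris sequence and matching its connecting map with the $K$-homological boundary is the main technical obstacle of the proof, and is precisely the step that is unavailable for the Roe algebra $C^*(X)$ itself.

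Once Mayer--Vietoris and naturality are in place, a standard five-lemma induction on the dimension of $X$ and, within each dimension, on the number of top-dimensional simplices reduces the proposition to the case of a point (or, equivalently, a single simplex). The base case is a direct computation: a propagation-rescaling homotopy using the parameter $t \in [0,\infty)$ identifies $K_\ast(C^*_L(\mathrm{pt}))$ with $K_\ast(\K) \cong K_\ast(\C)$, and under this identification $\mu_L$ becomes the canonical isomorphism $K_\ast(\mathrm{pt}) \cong K_\ast(\C)$. Combining this base case with the Mayer--Vietoris induction and the reduction to finite-dimensional simplicial complexes then yields Proposition~\ref{prop:loc iso to K-homology} for arbitrary proper metric spaces.
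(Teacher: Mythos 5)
The paper does not prove this proposition; it cites Yu~\cite{Yu97} and Qiao--Roe~\cite{QR10}. Your outline---a Mayer--Vietoris sequence for $K_\ast(C^*_L(\cdot))$, naturality of $\mu_L$, induction on dimension and on the number of top-dimensional simplices, and a base case at a point---is in essence Yu's original argument, which establishes the isomorphism for finite-dimensional, locally finite simplicial complexes with the spherical metric. For those spaces your sketch is a reasonable blueprint, and the partition-of-unity cutting you describe, exploiting $\pro(T_t)\to 0$ as $t\to\infty$, is indeed the mechanism that makes the Mayer--Vietoris sequence available for $C^*_L$ where it fails for $C^*$.

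The gap is in the very first step, where you reduce from a general proper metric space to a simplicial complex ``using exhaustion by compact subsets together with continuity of both functors under direct limits along proper inclusions.'' Locally finite $K$-homology (which is what $K_\ast(X)$ denotes here; see Section~2.3) does \emph{not} commute with direct limits over a compact exhaustion. For instance, in the locally finite sense $K_1(\R)\cong\mathbb{Z}$ and $K_0(\R)=0$, whereas every compact subset of $\R$ lies in a closed interval, whose $K$-homology is $\mathbb{Z}$ in degree $0$ and $0$ in degree $1$; the direct limit over intervals therefore fails to reproduce $K_\ast(\R)$. The correct relation is a Milnor $\lim^1$-sequence onto an \emph{inverse} limit, and there is no matching continuity statement for $K_\ast(C^*_L(\cdot))$ that would let the argument pass to the limit on both sides at once. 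Even granting a reduction to the compact case, a compact metric space need not be a simplicial complex nor have finite covering dimension, so the dimension induction has no base from which to start. The extension to arbitrary proper metric spaces is precisely the contribution of Qiao and Roe, and their proof takes a genuinely different route: they use Paschke duality, comparing $K_\ast(C^*_L(X))$ with $K_{\ast+1}\big(D^*(X)/C^*(X)\big)$ for the dual algebra $D^*(X)$ of pseudolocal operators, and avoid the simplicial reduction entirely. If you restrict the claim to finite-dimensional simplicial complexes your plan is sound (you would also want to invoke strong Lipschitz homotopy invariance of $K_\ast(C^*_L(\cdot))$ to collapse a simplex to a vertex), and that restricted statement is in fact all the present paper needs, since the proposition is only ever applied to Rips complexes $P_d(X)$ of a bounded geometry space.
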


Consequently, for a proper metric space $X$ with bounded geometry, we have the following commutative diagram:

 \[\xymatrix{
  &   \lim\limits_{d\to\infty}K_\ast(C^*_L(P_d(X)))  \ar[d]_{e_*} \\
		\lim\limits_{d\to\infty}K_*(P_d(X)) \ar[ru]^-{\mu_L}  \ar[r]^-{\mu} & \lim\limits_{d\to\infty}K_\ast(\c(P_d(X)))  
		}\] 
Therefore, the coarse Novikov conjecture is equivalent to that $e_\ast$ is injective and the coarse Baum-Connes conjecture is equivalent to that $e_\ast$ is bijective.



Now we introduce a (strongly) quasi-local version of Definition \ref{defn:localisation Roe algebra}. 

\begin{defn}\label{defn:localisation quasi-local Roe algebra}
Let $X$ be a proper metric space and $\H_X$ be an ample $X$-module. We define the \emph{localisation quasi-local algebra $ \qL(\H_X) $} to be the $ \c $-algebra consisting of all bounded uniformly continuous map $T: [0,\infty) \to \cq(\H_X)$ satisfying that for any $ \epsilon>0,R>0 $ there is a $ t_0>0$ such that for any $t> t_0$ and Borel subsets $ C,D\subseteq X $ with $ d(C,D)>R$, we have $ \|\chi_CT_t\chi_D\|<\epsilon $.
\end{defn}

The intuition behind the above notion is that the ``extent'' of quasi-locality of $T_t$ gets better and better as $t\to \infty$. Hence it is easy to see that $C^*_L(\H_X) \subseteq C^*_{L,q}(\H_X)$. The following lemma follows from the proof of \cite[Theorem 2.8, ``(i) $\Leftrightarrow$ (ii)'']{ST19} and Proposition \ref{prop:pictures of quasi-locality}, hence we omit the proof.

\begin{lem}\label{lem:char for C_Lsq}
Let $X$ be a proper metric space and $\H_X$ an ample $X$-module. Then a bounded uniformly continuous map $T: [0,\infty) \to \cq(\H_X)$ belongs to $\qL(\H_X)$ \emph{if and only if} for any $ \epsilon>0$ there exists $\delta>0$ such that for any $R>0$ there exists $t_0>0$ satisfying the following: for any $t>t_0$ and Borel function $g: X \to \C$ with norm $1$ and $ (\delta,R) $-variation, we have $\|[T_t,g]\|<\epsilon$.
\end{lem}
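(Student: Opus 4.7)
I plan to prove both directions of the equivalence via a $t$-parametrised version of the argument for $(1) \Leftrightarrow (3)$ in Proposition~\ref{prop:pictures of quasi-locality} (cf.\ \cite[Theorem 2.8]{ST19}). The important new feature is that the $\delta$ in the commutator criterion should be chosen depending only on $\epsilon$, with the $R$-dependence absorbed entirely into the scale $t_0$.

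For the ``$\Leftarrow$'' direction, given $\epsilon, R > 0$, I extract $\delta$ from the commutator hypothesis applied to $\epsilon$, and then apply that hypothesis a second time at the scale $R' := R\delta$ to obtain the corresponding $t_0$. For Borel subsets $C, D \subseteq X$ with $d(C, D) > R$, the continuous Borel function
\[
g(x) := \max\!\big(0,\, 1 - d(x,C)/R\big)
\]
satisfies $g|_C = 1$, $g|_D = 0$, $|g| \leq 1$, and (being $1/R$-Lipschitz) has $(\delta, R\delta)$-variation. The identities $\chi_D g = 0$ and $g\chi_C = \chi_C$ then yield $\chi_D T_t \chi_C = -\chi_D [g, T_t]\chi_C$, so $\|\chi_D T_t \chi_C\| \leq \|[g, T_t]\| < \epsilon$ for every $t > t_0$, verifying Definition~\ref{defn:localisation quasi-local Roe algebra}.

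For the ``$\Rightarrow$'' direction, given $\epsilon > 0$, I will set $\delta := \epsilon/(16 \|T\|_\infty + 1)$, which is independent of $R$. By splitting real and imaginary parts and rescaling, it suffices to treat $g: X \to [0, 1]$ Borel with $(\delta, R)$-variation. Partition $[0, 1]$ into $N := \lceil 1/\delta \rceil$ half-open intervals $J_k$ of length $\delta$ with midpoints $c_k$, set $E_k := g^{-1}(J_k)$, and define $g_0 := \sum_{k=1}^N c_k \chi_{E_k}$, so that $\|g - g_0\|_\infty \leq \delta/2$ and
\[
[T_t, g_0] = \sum_{j, j'} (c_j - c_{j'}) \chi_{E_{j'}} T_t \chi_{E_j}.
\]
I split this double sum according to $|j - j'|$. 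For $|j - j'| = 1$, the two shifted sums $\sum_j \chi_{E_{j \pm 1}} T_t \chi_{E_j}$ each have operator norm at most $\|T_t\|$, by a direct computation of $A^*A$ exploiting the disjointness of $\{E_j\}$, so this band contributes at most $2\delta \|T\|_\infty$. For $|j - j'| \geq 2$, the $g$-values on $E_j$ and $E_{j'}$ differ by more than $\delta$, so $(\delta, R)$-variation forces $d(E_j, E_{j'}) \geq R$; Definition~\ref{defn:localisation quasi-local Roe algebra} applied with tolerance $\epsilon/(4N^2)$ and a separation slightly less than $R$ then yields a $t_0 = t_0(R)$ such that $\|\chi_{E_{j'}} T_t \chi_{E_j}\| < \epsilon/(4N^2)$ for $t > t_0$, and summing the at most $N^2$ such pairs bounds this contribution by $\epsilon/4$. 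Combined with $\|[T_t, g - g_0]\| \leq \delta \|T\|_\infty$, this gives $\|[T_t, g]\| < \epsilon$ for $t > t_0$.

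The main obstacle is the diagonal band $|j - j'| = 1$: consecutive level sets $E_j, E_{j+1}$ may be arbitrarily intertwined in $X$, so Definition~\ref{defn:localisation quasi-local Roe algebra} yields no individual control on these blocks. The remedy is to aggregate them along the disjoint supports $\{E_j\}$ so that each shifted sum has norm bounded by $\|T_t\|$; its contribution is then suppressed by the small factor $\delta$ arising as the width of a single level interval, which is why $\delta$ must be taken proportional to $\epsilon/\|T\|_\infty$ at the very start.
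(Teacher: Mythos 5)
Your proof is correct, and it implements exactly what the paper intends: the paper omits the proof, referring to \cite[Theorem~2.8, ``(i)~$\Leftrightarrow$~(ii)'']{ST19} and Proposition~\ref{prop:pictures of quasi-locality}, and your argument is precisely a $t$-parametrised version of that equivalence, with the crucial bookkeeping observation --- that $\delta$ depends only on $\epsilon$ while the $R$-dependence is pushed into $t_0$ --- handled correctly in both directions (choosing $R' = R\delta$ for $\Leftarrow$, and exploiting that $N = \lceil 1/\delta\rceil$ is $R$-independent so that the tolerance $\epsilon/(4N^2)$ is as well in $\Rightarrow$).
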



The above lemma inspires us to introduce the following:

\begin{defn}\label{defn:localisation strongly quasi-local Roe algebra}
Let $X$ be a proper metric space and $\H_X$ an ample $X$-module. The \emph{localisation strongly quasi-local algebra $ \sL(\H_X) $} is defined to be the $ \c $-algebra consisting of all bounded uniformly continuous map $T:[0,\infty) \to \cs(\H_X) $ satisfying that for any $ \epsilon>0$ there exists $\delta>0$ such that for any $R>0$ there exists $t_0>0$ for any $t>t_0$ and strongly measurable function $ g:X\to {\K}_1 $ with $ (\delta,R) $-variation, we have $\|[T_t \otimes \Id_{\HH},\Lambda(g)]\|<\epsilon$.
\end{defn}

Following similar arguments as in the proof of \cite[Proposition 3.7]{Yu97} with slight changes from those of \cite[Proposition 2.9]{quasi-local} and Proposition \ref{prop:strong coarse invariance}, we can show that both $C^*_{L,q}(\H_X)$ and $C^*_{L,sq}(\H_X)$ do not depend on the ample module $\H_X$, hence denoted by $C^*_{L,q}(X)$ and $C^*_{L,sq}(X)$, respectively. We omit the details here. Moreover, from a similar argument as in the proof of Proposition \ref{prop:relations between Roe, QL and SQL}, we have:

\begin{lem}\label{lem:relations between loc alg}
Let $X$ be a proper metric space, then we have:
\[
C^*_L(X) \subseteq C^*_{L,sq}(X) \subseteq C^*_{L,q}(X).
\]
\end{lem}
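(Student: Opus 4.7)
The lemma asserts two inclusions, which I address in turn.

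For the second inclusion $\sL(X)\subseteq \qL(X)$, I would follow the rank-one projection trick from the proof of Proposition~\ref{prop:relations between Roe, QL and SQL}(1), combined with the Borel-function characterisation of $\qL(X)$ provided by Lemma~\ref{lem:char for C_Lsq}. Given $T\in \sL(X)$ and $\varepsilon>0$, Definition~\ref{defn:localisation strongly quasi-local Roe algebra} produces a $\delta>0$ such that for every $R>0$ there exists $t_0>0$ governing the commutator estimate. For a scalar Borel function $g\colon X\to\mathbb{C}$ of norm $1$ with $(\delta,R)$-variation, I would lift it to a strongly measurable map $\tilde g(x):=g(x)p$, where $p\in\K_1$ is a fixed rank-one projection. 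Then $\tilde g$ has $(\delta,R)$-variation, and the direct computation $[T_t\otimes\Id_\HH,\Lambda(\tilde g)]=[T_t,g]\otimes p$ yields $\|[T_t,g]\|<\varepsilon$. Lemma~\ref{lem:char for C_Lsq} then gives $T\in \qL(X)$.

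For the first inclusion $\L(X)\subseteq \sL(X)$, the strategy is to adapt the partition-decomposition argument of Proposition~\ref{prop:relations between Roe, QL and SQL}(2) to the parametrised setting. Since $\sL(X)$ is norm-closed, it suffices to verify the inclusion on the dense subset of paths $T\colon[0,\infty)\to\C[\H_X]$ of genuinely finite-propagation operators with $\pro(T_t)\to 0$. Given such a $T$ and $\varepsilon>0$, fix a Borel partition $\{E_n\}$ of $X$ with $\diam(E_n)\leq C$ and base-points $x_n\in E_n$, where $C>0$ is chosen once and for all and the bounded-geometry hypothesis on $X$ yields a uniform bound $N$ on $\#\{m:d(E_n,E_m)\leq C\}$. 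Set $\delta:=\varepsilon/(8N\|T\|_\infty)$, which is independent of $R$. For any $R>0$, choose $t_0$ large enough that $\pro(T_t)<\min(C,R-2C)$ for all $t>t_0$ (refining the partition to a finer scale if $R$ is too small relative to $C$). For such $t$ and any strongly measurable $g\colon X\to\K_1$ with $(\delta,R)$-variation, approximate $g$ by the countably-valued step function $\tilde g:=\sum_n g(x_n)\chi_{E_n}$, losing at most $\delta$ in sup-norm, then decompose $T_t=\sum_{i=1}^{N}T_{t,i}$ into ``one-neighbour-per-row'' pieces as in Proposition~\ref{prop:relations between Roe, QL and SQL}(2). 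Bounding each $\|[T_{t,i}\otimes\Id_\HH,\Lambda(\tilde g)]\|\leq\delta\|T_{t,i}\|$ by the one-neighbour structure, and summing over $i$, yields the desired commutator estimate.

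The main obstacle is the quantifier order built into Definition~\ref{defn:localisation strongly quasi-local Roe algebra}: the parameter $\delta$ must be produced \emph{before} $R$, whereas the partition-overlap constant $N$ naively depends on $R$ through the scale at which one partitions $X$. The key geometric observation that resolves this is that, since $\pro(T_t)\to 0$, for sufficiently large $t_0$ the only overlap that matters is at the very small scale $\pro(T_t)$, which is controlled by a universal small-scale bounded-geometry constant of $X$, independent of $R$. Concretely, for large $R$ the fixed coarse partition of diameter $C$ suffices, while for small $R$ one must refine the partition in tandem with growing $t$, keeping the effective overlap bounded by the same universal constant. Making this choice of partition uniform across all scales of $R$ is the most delicate technical point of the proof.
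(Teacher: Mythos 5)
Your argument for the second inclusion $\sL(X)\subseteq\qL(X)$ is correct and matches the paper's intent: the rank-one amplification $\tilde g(x)=g(x)p$ preserves $(\delta,R)$-variation, is strongly measurable (Pettis), and the identity $[T_t\otimes\Id_\HH,\Lambda(\tilde g)]=[T_t,g]\otimes p$ combined with Lemma~\ref{lem:char for C_Lsq} does the job.

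The first inclusion $\L(X)\subseteq\sL(X)$ is where the trouble lies. You have correctly identified the obstacle: in Definition~\ref{defn:localisation strongly quasi-local Roe algebra} the parameter $\delta$ is quantified \emph{before} $R$, and since the variation hypothesis $(\delta,R)$ becomes \emph{weaker} as $R$ decreases, the set of maps $g$ you must control grows as $R\to 0$. The paper itself only gestures at ``a similar argument as in the proof of Proposition~\ref{prop:relations between Roe, QL and SQL},'' but that Proposition's part (2) fixes the scale $R_0=\pro(T)$ once and for all and then chooses $C$, $N$, $\delta$, $R$ in that order — a quantifier order that does not transplant to the localisation setting, where $R$ is adversarial. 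Your attempted repair — refining the partition when $R$ is small and appealing to a ``universal small-scale bounded-geometry constant of $X$'' to keep the overlap multiplicity $N$ under control uniformly in $R$ — is the step that fails. No such universal constant exists for a general proper metric space, and it can fail even under the paper's bounded-geometry hypothesis, which controls a \emph{coarse} net but says nothing about small scales: for instance, attaching to each integer $n\in\mathbb Z$ a ball $B_{\mathbb R^n}(0,1/n)$ yields a proper space with bounded geometry whose partition-overlap constants at scale $s$ are unbounded as $s\to 0$. Consequently, for the $\delta$ you fix at the start, the product $N(R)\cdot\delta\cdot\|T\|_\infty$ need not stay below $\varepsilon$ as $R\to 0$, and the claimed commutator estimate is not obtained. (You also tacitly add a bounded-geometry hypothesis that the Lemma's statement does not carry; that discrepancy is already present in the paper, but your sketch relies on it essentially.)

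In short: the second inclusion is fine; the first inclusion, as written, rests on an unproved and in general false geometric claim about uniform small-scale multiplicity. A correct proof would either need an additional hypothesis ensuring uniformly bounded covering multiplicities at all scales (automatic for discrete spaces and for Rips complexes of bounded-geometry discrete spaces, which is the setting where the paper actually applies the lemma), or a genuinely different argument that avoids the partition-overlap bookkeeping altogether.
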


Note that the above three localisation algebras are closely related to another algebra, which comes from an ``almost commutant picture'' of $K$-homology introduced in \cite{DMW18}:

\begin{defn}[\cite{DMW18}]\label{defn:auxiliary localisation algebra}
Let $X$ be a proper metric space and $\H_X$ an ample $X$-module. We define $ \CC_L(C_0(X)) $ to be the $C^*$-algebra consisting of all bounded uniformly continuous map $T: [0,\infty) \to \B(\H_X)$ satisfying
	\begin{enumerate}
		\item for any $ t\in[0,\infty)$ and $ f\in C_0(X) $, we have $ T_t f, fT_t \in \K(\H_X) $;
		\item for any $ f\in C_0(X) $, we have $\lim_{t\to \infty} \|[T_t,f]\| =0$.
	\end{enumerate}
\end{defn}


As pointed out in \cite{DMW18}, it is a routine work to check that the $C^*$-algebra $ \CC_L(C_0(X))$ does not depend on the module $\H_X$. The following lemma relates our localisation (strongly) quasi-local algebras with $ \CC_L(C_0(X))$:

\begin{lem}\label{lem:inclusion relationship of localisation algebras}
	$ \L(X)\subseteq \sL(X)\subseteq \qL(X)\subseteq \CC_L(C_0(X)) $.
\end{lem}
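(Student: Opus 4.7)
The plan is to verify the three inclusions $\L(X)\subseteq \sL(X)\subseteq \qL(X)\subseteq \CC_L(C_0(X))$ from right to left, since the arguments become progressively more technical in that direction.

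For $\qL(X)\subseteq \CC_L(C_0(X))$: since each $T_t\in\cq(\H_X)$ is locally compact, $T_tf$ and $fT_t$ are compact for every $f\in C_0(X)$, giving Definition~\ref{defn:auxiliary localisation algebra}(1). For condition~(2), fix $f\in C_0(X)$ and $\varepsilon>0$. Lemma~\ref{lem:char for C_Lsq} provides $\delta>0$ such that for any $R>0$ there exists $t_0$ with $\|[T_t,g]\|<\varepsilon/\|f\|_\infty$ for all $t>t_0$ and Borel $g:X\to\C$ of norm $1$ with $(\delta,R)$-variation. Uniform continuity of $f$ yields $R>0$ for which $f/\|f\|_\infty$ has $(\delta,R)$-variation, whence $\|[T_t,f]\|<\varepsilon$ for $t$ large.

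For $\sL(X)\subseteq \qL(X)$: the pointwise inclusion $\cs(\H_X)\subseteq \cq(\H_X)$ is Proposition~\ref{prop:relations between Roe, QL and SQL}(1), so each $T_t\in\cq(\H_X)$. To establish the decay part through the equivalent form in Lemma~\ref{lem:char for C_Lsq}, fix a rank-one projection $p\in\K_1$; given a scalar Borel $g$ of norm $1$ with $(\delta,R)$-variation, set $\tilde g(x):=g(x)p$. Then $\tilde g:X\to\K_1$ is strongly measurable with the same $(\delta,R)$-variation, and a direct computation gives $[T_t\otimes\Id_\HH,\Lambda(\tilde g)]=[T_t,g]\otimes p$, so the two commutator norms coincide. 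The estimate required by Lemma~\ref{lem:char for C_Lsq} then follows from $T\in\sL(X)$ applied to $\tilde g$.

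For $\L(X)\subseteq\sL(X)$: by density, it suffices to treat $T$ such that each $T_t$ has finite propagation, $\pro(T_t)\to 0$, and $M:=\sup_t\|T_t\|<\infty$; Proposition~\ref{prop:relations between Roe, QL and SQL}(2) then ensures $T_t\in\cs(\H_X)$ pointwise. The plan is to rerun the argument of Proposition~\ref{prop:relations between Roe, QL and SQL}(2) $t$-uniformly, using the decay of propagation to shrink the effective scale. Given $\varepsilon>0$, choose $\delta>0$; for each $R>0$ pick $t_0$ so that $\pro(T_t)<R/3$ for $t>t_0$, together with a Borel partition $\{E_n\}$ of $X$ of diameter less than $R/3$. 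Any strongly measurable $g:X\to\K_1$ of $(\delta,R)$-variation admits a countably-valued discretization $\tilde g$ constant on each $E_n$ with $\|g-\tilde g\|_\infty<\delta$, and the neighbor decomposition $T_t=\sum_{i=1}^N T_{t,i}$ from the proof of Proposition~\ref{prop:relations between Roe, QL and SQL}(2) bounds $\|[T_t\otimes\Id_\HH,\Lambda(g)]\|$ by $\varepsilon$.

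The main obstacle lies in this last inclusion: in Proposition~\ref{prop:relations between Roe, QL and SQL}(2) the constant relating $\delta$ to $\varepsilon$ involves the neighbor count $N$ of the partition, whereas Definition~\ref{defn:localisation strongly quasi-local Roe algebra} forces $\delta$ to be chosen before $R$ and kept uniform in $R$. The resolution is to exploit $\pro(T_t)\to 0$: for each $R$ we repartition at a scale proportional to $R$, so that $N$ is controlled by the local geometry at small scales uniformly in $R$, rather than by the (possibly growing) global count at scale $R$.
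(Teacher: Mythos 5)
Your argument follows the same route as the paper: the step $\qL(X)\subseteq\CC_L(C_0(X))$ is essentially verbatim the paper's own proof (local compactness for condition (1), then Lemma~\ref{lem:char for C_Lsq} together with uniform continuity of $f\in C_0(X)$ for condition (2)), and the first two inclusions are precisely what the paper delegates to Lemma~\ref{lem:relations between loc alg}, which is there justified only by ``a similar argument to Proposition~\ref{prop:relations between Roe, QL and SQL}''. Your $\sL\subseteq\qL$ step via the rank-one projection $p$ is exactly Proposition~\ref{prop:relations between Roe, QL and SQL}(1) applied pointwise in $t$, with the observation that the witnesses $\delta,t_0$ can be reused; that part is fine.

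Where you go further than the paper is in flagging the genuine subtlety in $\L(X)\subseteq\sL(X)$: Definition~\ref{defn:localisation strongly quasi-local Roe algebra} forces $\delta$ to be chosen before and uniformly in $R$, whereas the proof of Proposition~\ref{prop:relations between Roe, QL and SQL}(2) produces a $\delta$ that depends on a neighbour count $N$ tied to the relevant length scale. Your proposed fix---rescaling the partition proportionally to $R$ and invoking $\pro(T_t)\to 0$---is the right mechanism, but the assertion that $N$ is then ``controlled by the local geometry at small scales uniformly in $R$'' is unjustified as written: it amounts to a uniform small-scale doubling (equivalently, bounded local dimension) hypothesis. This does hold for the Rips complexes $P_d(X)$ of a bounded geometry space, where the simplicial dimension is finite, but it is not automatic for an arbitrary proper metric space, which is the stated generality of Lemma~\ref{lem:relations between loc alg}. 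It also helps to observe that one may restrict to small $R$ (for $R\geq R'$ the conclusion for $R$ follows from that for $R'$, since $(\delta,R)$-variation implies $(\delta,R')$-variation), and that some bounded geometry hypothesis is already needed just to have $T_t\in\cs(\H_X)$ via Proposition~\ref{prop:relations between Roe, QL and SQL}(2). These are gaps the paper itself leaves open, but in a careful write-up you should state the geometric hypothesis on $X$ explicitly and verify the uniform bound on $N$ rather than gesture at it.
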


\begin{proof}
From Lemma \ref{lem:relations between loc alg}, it remains to show that $ \qL(\H_X)\subseteq \CC_L(C_0(X)) $. Let $ T=(T_t)\in \qL(\H_X) $, it suffices to check that $\lim_{t\to \infty} \|[T_t,f]\| =0$ for any real-valued non-negative function $ f\in C_0(X) $ with $\|f\|_\infty \leq 1$. 
	
Fix such an $f\in C_0(X)$. For any $\epsilon>0$, it follows from Lemma \ref{lem:char for C_Lsq} that there exists $\delta>0$ such that for any $R>0$ there exists $t_0>0$ satisfying the following: for any $t>t_0$ and Borel function $g: X \to \C$ with norm $1$ and $ (\delta,R) $-variation, we have $\|[T_t,g]\|<\epsilon$.

Note that $f$ is uniformly consitnuous, hence for the above $\delta$ there exists an $R>0$ such that $f$ has $ (\delta,R) $-variation. Therefore, the above paragraph provides a $t_0$ such that for any $t>t_0$ we have $\|[T_t,f]\|<\epsilon$. This concludes the proof.
\end{proof}

As a special case of the main result in \cite{DMW18}, we know that $K_\ast(\CC_L(C_0(X))) \cong K_\ast(X)$. This inspires the following theorem, which is the main result of this section and implies Theorem \ref{introthm:localisation}. It shows that the above three localisation algebras have the same $K$-theories, hence any of them can serve as the $K$-homology of $X$ by Proposition \ref{prop:loc iso to K-homology}.

\begin{thm}\label{thm:loc alg have same K-theory}
Let $X$ be a proper metric space, then the inclusion maps induce isomorphisms on $K$-theories:
\[
K_\ast(C^*_L(X)) \cong K_\ast(C^*_{L,sq}(X)) \cong K_\ast(C^*_{L,q}(X)) \cong K_\ast(\CC_L(C_0(X))).
\]
\end{thm}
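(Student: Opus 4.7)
The plan is to identify the $K$-theory of each of the four localisation algebras with the $K$-homology $K_*(X)$ via compatible local assembly maps, leveraging the known endpoint identifications from Yu \cite{Yu97} and from \cite{DMW18}, and then extending them to the two intermediate algebras by a Mayer--Vietoris induction.

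First, Yu's local assembly map $\mu_L : K_*(X) \to K_*(\L(X))$ is an isomorphism by Proposition \ref{prop:loc iso to K-homology}. Composing with the natural inclusions of Lemma \ref{lem:inclusion relationship of localisation algebras} defines maps $\mu_L^{(j)} : K_*(X) \to K_*(\mathcal{A}_j)$ for each of $\mathcal{A}_1 = \L(X),\ \mathcal{A}_2 = \sL(X),\ \mathcal{A}_3 = \qL(X),\ \mathcal{A}_4 = \CC_L(C_0(X))$. By the main theorem of \cite{DMW18}, $\mu_L^{(4)}$ is also an isomorphism. Since $\mu_L^{(4)}$ factors through the entire chain of inclusions starting from $\mu_L^{(1)}$, the composition $K_*(\L(X)) \to K_*(\CC_L(C_0(X)))$ is an isomorphism; in particular, the leftmost inclusion is injective on $K$-theory and the rightmost is surjective. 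This by itself, however, does not force the two interior inclusions to be isomorphisms.

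To upgrade to isomorphisms at every stage, I would prove that $\mu_L^{(2)}$ and $\mu_L^{(3)}$ are themselves isomorphisms by a Mayer--Vietoris induction, following the strategy of Yu \cite{Yu97} refined in \cite{QR10}. For a proper metric space $X$ and a controlled closed decomposition $X = A \cup B$, one sets up six-term Mayer--Vietoris sequences for each of the four localisation algebras. The crucial point is that for $T = (T_t) \in \sL(X)$ or $\qL(X)$, the locality parameter of $T_t$ tightens as $t \to \infty$, so geometric cutoffs by $\chi_A$ and $\chi_B$ produce short exact sequences in the limit. Comparing the four Mayer--Vietoris ladders via the inclusions and applying the five lemma propagates the isomorphism from the base case to all of $X$. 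The induction bottoms out on compact pieces, where $\diam < \infty$ forces all four locality conditions to become vacuous modulo an asymptotic correction, and a direct comparison identifies the four $K$-theories with $K_*(X)$.

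The main technical obstacle will be verifying Mayer--Vietoris for $\sL(X)$. Strong quasi-locality is defined through commutators with $\K$-valued maps $\Lambda(g)$, so it is not a priori clear that a truncation such as $\chi_A T_t \chi_A$ preserves the strong quasi-locality condition or that the ideal generated by cutoffs near $A$ coincides with $\sL$ of a neighbourhood of $A$. I expect this step to require a careful adaptation of the decomposition and commutator estimates from the proof of Proposition \ref{prop:strong coarse invariance}: uniformly controlling the parameters $(\delta, R)$ that witness strong quasi-locality of $T_t$ across the decomposition $\{A, B\}$ should show that the relevant cutoffs lie in $\sL$ of the respective subspaces up to an error that vanishes as $t \to \infty$, which is exactly what is needed to extract an honest short exact sequence at the localisation level.
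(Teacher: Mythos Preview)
Your approach is genuinely different from the paper's, and considerably more laborious. The paper does not touch Mayer--Vietoris or any induction over the space at all. Instead it introduces, for each of the four localisation algebras $\mathcal{A}_j$, the two-sided ideal $\mathcal{A}_{j,0}$ consisting of those $(T_t)$ for which $\chi_K T_t \to 0$ and $T_t \chi_K \to 0$ for every compact $K \subseteq X$. It then proves two things: first (Proposition~\ref{prop:algebraic iso between loc alg}), that the four quotients $\mathcal{A}_j / \mathcal{A}_{j,0}$ are all \emph{isomorphic as $C^*$-algebras}, by showing that any $T \in \CC_L(C_0(X))$ differs from an element of $\L(X)$ by something in $\CC_{L,0}(C_0(X))$ (an explicit averaging construction $S_t = \sum_i \sqrt{\phi_i^{(n)}}\, T_t\, \sqrt{\phi_i^{(n)}}$ over finer and finer partitions of unity); second (Lemma~\ref{lem:K-homology lemma 2}), that each ideal $\mathcal{A}_{j,0}$ has trivial $K$-theory by a standard Eilenberg swindle. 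The theorem then drops out of the four six-term exact sequences. No cutoff estimates for $\sL$ or $\qL$ are needed, and neither Yu's nor the \cite{DMW18} isomorphism is invoked in the proof itself.

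The practical difference is that the paper's argument localises in \emph{time} (the parameter $t$) rather than in the space $X$: passing to the quotient by $\mathcal{A}_{j,0}$ kills everything that matters on compacta, and on that quotient the four notions of asymptotic locality coincide for the simple reason that one can replace any family $(T_t)$ by a finite-propagation family modulo the ideal. Your route would instead require verifying that $\sL$ and $\qL$ satisfy the full package of axioms (homotopy invariance, excision, behaviour under coarse covers) needed to run the Qiao--Roe induction, and the obstacle you flag---that cutting by $\chi_A$ may not preserve strong quasi-locality with controlled parameters---is a real one that you have not resolved. It is plausible that this can be made to work, but it is a substantially longer road to the same destination.
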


To achieve, we consider a two-sided $\ast$-ideal $ \CC_{L,0}(C_0(X)) $ in $\CC_L(C_0(X))$ consisting of maps $T= (T_t)\in  \L(X) $ such that for any compact $ K\subseteq X $, we have:
\[
\lim_{t\to\infty}\chi_KT_t=0 \quad \mbox{and} \quad   \lim_{t\to\infty}T_t\chi_K=0.
\]
Moreover, we consider $ C^*_{L,0}(X):=\CC_{L,0}(C_0(X))\cap\L(X) $, $ C^*_{L,sq,0}(X):=\CC_{L,0}(C_0(X))\cap\sL(X) $ and $ C^*_{L,q,0}(X):=\CC_{L,0}(C_0(X))\cap\qL(X) $. They are two-sided $\ast$-ideals in $\L(X), \sL(X)$ and $\qL(X)$, respectively.

%



The following is a crucial step to achieve Theorem \ref{thm:loc alg have same K-theory}:

\begin{prop}\label{prop:algebraic iso between loc alg}
Let $X$ be a proper metric space. Then the inclusions from Lemma \ref{lem:inclusion relationship of localisation algebras} induces the following isomorphisms:
\[
\frac{\L(X)}{C^*_{L,0}(X)} \cong \frac{\sL(X)}{C^*_{L,sq,0}(X)} \cong \frac{\qL(X)}{C^*_{L,q,0}(X)} \cong \frac{\CC_L(C_0(X))}{\CC_{L,0}(C_0(X))}.
\]
\end{prop}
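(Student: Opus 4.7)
My approach has two parts: first show that each inclusion-induced map in the claimed chain is injective (which is automatic), then show that the outermost map $\L(X)/C^*_{L,0}(X) \to \CC_L(C_0(X))/\CC_{L,0}(C_0(X))$ is surjective. A standard diagram chase then forces every intermediate map to be an isomorphism, since a sequence of injective maps whose composition is bijective must consist of bijections. Injectivity is automatic from the definitions of the three $0$-ideals: for any pair $A \subseteq B$ drawn from $\L(X) \subseteq \sL(X) \subseteq \qL(X) \subseteq \CC_L(C_0(X))$, one has $A_0 = A \cap \CC_{L,0}(C_0(X)) = A \cap B_0$, so the induced map $A/A_0 \to B/B_0$ has trivial kernel.

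For surjectivity, I would implement the classical partition-of-unity sharpening. Given $T = (T_t)_{t \geq 0} \in \CC_L(C_0(X))$, choose for each $t$ a locally finite open cover $\{U_i^{(t)}\}_i$ of $X$ with $\sup_i \diam(U_i^{(t)}) \to 0$ as $t \to \infty$ and with uniformly bounded multiplicity, together with a subordinate continuous partition of unity $\{\phi_i^{(t)}\}_i$ chosen to depend uniformly continuously on $t$. Set
\[
T'_t := \sum_i (\phi_i^{(t)})^{1/2}\, T_t\, (\phi_i^{(t)})^{1/2},
\]
converging strongly. Then $\pro(T'_t) \to 0$, each $T'_t$ is locally compact (every summand is compact by local compactness of $T_t$, and the sum is locally finite in the $C_0(X)$-sense), and uniform continuity in $t$ is built into the construction; hence $T' \in \L(X)$. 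Since $T - T'$ will be shown to lie in $\CC_{L,0}(C_0(X))$, and already lies in $\CC_L(C_0(X))$, it automatically lies in each intersection $\sL(X)\cap\CC_{L,0}(C_0(X)) = C^*_{L,sq,0}(X)$ and $\qL(X)\cap\CC_{L,0}(C_0(X)) = C^*_{L,q,0}(X)$, so the same sharpening simultaneously witnesses surjectivity at every stage.

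It remains to verify $T - T' \in \CC_{L,0}(C_0(X))$, i.e., $\|\chi_K(T_t - T'_t)\| \to 0$ and the symmetric statement for every compact $K \subseteq X$. Using $\sum_i \phi_i^{(t)} = 1$ one obtains the identity $T_t - T'_t = \tfrac{1}{2}\sum_i \bigl[(\phi_i^{(t)})^{1/2}, [(\phi_i^{(t)})^{1/2}, T_t]\bigr]$; after inserting a compactly supported bump $\psi \in C_0(X)$ equal to $1$ on a neighbourhood of $K$ and writing $\chi_K = \chi_K\psi$, only finitely many summands survive, and each is expressible in terms of commutators $\|[T_t, h]\|$ with $h \in C_0(X)$ of norm at most $1$, which vanish as $t \to \infty$ by the defining property of $\CC_L(C_0(X))$. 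The main obstacle I anticipate is making this estimate uniform across the (growing) collection of surviving $i$'s as $t\to\infty$: a naive ``sum of norms'' bound fails, and one must combine the bounded multiplicity of the cover (to invoke a Schur-type estimate) with an equicontinuity-style strengthening of $\|[T_t, f]\|\to 0$ along the family $\{\phi_i^{(t)}\}$, which is the crucial technical point.
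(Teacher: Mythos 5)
Your overall strategy matches the paper's: reduce to showing that any $T = (T_t) \in \CC_L(C_0(X))$ differs from some $S \in \L(X)$ by an element of $\CC_{L,0}(C_0(X))$, and achieve this with a partition-of-unity sharpening. Your injectivity argument via $A_0 = A \cap \CC_{L,0}(C_0(X)) = A \cap B_0$ is correct and is exactly what makes the paper's ``it is clear'' line true.

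However, the ``main obstacle'' you flag at the end is a genuine gap, and you do not resolve it. Your construction builds a single $t$-dependent cover $\{U_i^{(t)}\}_i$ with $\sup_i \diam(U_i^{(t)}) \to 0$, which creates exactly the uniformity problem you describe: as $t$ increases, the covers get finer, so even on a fixed compact $K$ the number of surviving indices $i$ grows without bound, and the defining property of $\CC_L(C_0(X))$ only gives $\|[T_t, h]\| \to 0$ for each \emph{fixed} $h$, not uniformly across a growing family. Your vague appeal to ``a Schur-type estimate plus an equicontinuity-style strengthening'' is not a proof, and it is unclear such a strengthening holds.

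The paper circumvents this by decoupling the cover scale from $t$ and then recoupling them via a diagonal (interpolation) argument. Concretely: for each fixed $n$ choose a locally finite cover at scale $1/n$ and set $S_{n,t} = \sum_i \sqrt{\phi_i^{(n)}} T_t \sqrt{\phi_i^{(n)}}$. Because the cover is now \emph{fixed} (independent of $t$), on any fixed compact $K$ only finitely many terms contribute and each commutator tends to $0$; hence $\|\chi_K(S_{n,t} - T_t)\| \to 0$ as $t \to \infty$ for each $n$. One then picks thresholds $t_n \uparrow \infty$ so that both $\|\chi_{K_n}(S_{n,t} - T_t)\| < 1/n$ and $\|\chi_{K_n}(S_{n+1,t} - T_t)\| < 1/n$ for $t \geq t_n$ (with $K_n$ a compact exhaustion), and glues the $S_{n,\cdot}$ across the intervals $[t_n, t_{n+2}]$ using a partition of unity $\{\psi_n\}$ of $[0,\infty)$, defining $S_t := \sum_m \psi_m(t) S_{m+1,t}$. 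This $S$ has $\pro(S_t) \to 0$ (hence $S \in \L(X)$) and $T - S \in \CC_{L,0}(C_0(X))$ by construction, with no uniformity issue since at each $t$ only two of the $S_{n,t}$'s are active. You should replace your single $t$-dependent cover with this two-parameter construction; without it, the surjectivity step is not established.
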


%

\begin{proof}
It is clear that the inclusions induce the following injections:
\[
\frac{\L(X)}{C^*_{L,0}(X)} \hookrightarrow \frac{\sL(X)}{C^*_{L,sq,0}(X)} \hookrightarrow \frac{\qL(X)}{C^*_{L,q,0}(X)} \hookrightarrow \frac{\CC_L(C_0(X))}{\CC_{L,0}(C_0(X))}.
\]
Hence it suffices to show that for any $T = (T_t)\in \CC_L(C_0(X)) $, there exists $S = (S_t) \in \L(X)$ such that $T-S\in \CC_{L,0}(C_0(X)) $.
	
For each $ n\in \N $, let $ {\{U_{i}^{(n)}\}}_{i\in \N} $ be a locally finite open cover of $ X $ such that $ \diam(U_{i}^{(n)})<1/n $, and let $ {\{\phi_i^{(n)}\}}_{i\in \N} $ be a partition of unity subordinate to ${\{U_{i}^{(n)}\}}_{i\in \N}$. We define
\[ 
S_{n,t}=\sum_{i\in \N} \sqrt{\phi_i^{(n)}}T_t\sqrt{\phi_i^{(n)}},
\]
where the sum converges in the strong operator topology. It is clear that $ S_{n,t}$ has propagation at most $1/n $.

Since $X$ is proper, we can take a sequence of compact subsets $\{K_m\}_{m\in \N}$ such that each $K_m$ is a closed ball, $K_m \subseteq K_{m+1}$ and $X= \bigcup_{m\in \N} K_m$. For each $m,n \in \N$ we have:
\[
\chi_{K_m}(S_{n,t}-T_t)=\sum\limits_{i\in \N} \chi_{K_m}\sqrt{\phi_i^{(n)}}\big[T_t,\sqrt{\phi_i^{(n)}}\big].
\]
Since $K_m$ is compact, we know that there are only finitely many non-zero items in the above summation. Combining with the assumption that $(T_t) \in \CC_L(C_0(X))$, we obtain that $ \chi_{K_m}(S_{n,t}-T_t) $ tends to zero as $ t  \to \infty$ for each $m,n \in \N$.  Hence for each $ n\in \N $, there is a $ t_n $ such that for any $ t\ge t_n $ we have
\[
\|\chi_{K_n}(S_{n,t}-T_t)\|<1/n \quad \mbox{and} \quad  \|\chi_{K_n}(S_{n+1,t}-T_t)\|<1/n.
\]
	
Set $ t_0=0 $ and we assume that $ t_n $ is monotonically increasing to infinity. Let $ {\{\psi_n\}}_{n\in \N} $ be a partition of unity of $ [0,+\infty) $ such that the $\supp(\psi_n) \subseteq [t_n,t_{n+2}] $. Define
	\[S_t:=\sum\limits_{m=0}^\infty\psi_m(t)S_{m+1,t}\]
for each $t\in [0,\infty)$. It is easy to check that $S:=(S_t)$ belongs to $C^*_L(X)$.

Now we claim that $ T-S \in\CC_{L,0}(C_0(X)) $. Fixing a compact subset $ K\subseteq X $, there is an $ m_0 \in \N $ such that $ K\subseteq K_n $ for any $ n\ge m_0 $.
Then for any $ n>m_0 $ and $ t\in[t_n,t_{n+1}] $, we have 
	\begin{align*}
		\chi_K(T_t-S_t)&=\chi_K\chi_{K_n}(T_t-S_t)\\
		&=\chi_K\chi_{K_n}\big(\sum\limits_{m=0}^\infty\psi_m(t)T_t-\sum\limits_{m=0}^\infty\psi_m(t)S_{m+1,t}\big)\\
		&=\chi_K\chi_{K_n}\big(\psi_{n-1}(t) \cdot (T_t-S_{n,t})+\psi_n(t) \cdot (T_t-S_{n+1,t})\big).
	\end{align*}
	According to the choice of $ t_n $, we have $ \|\chi_K(T_t-S_t)\|\le2/n $. Similarly, we obtain that $ \|(T_t-S_t)\chi_K\|\le2/n $. Hence we conclude the proof.
\end{proof}

We also need the following lemma:

\begin{lem}\label{lem:K-homology lemma 2}
Let $X$ be a proper metric space, and $\H_X$ be an ample $X$-module. Then we have:
\[
K_*(C^*_{L,0}(\H_X))=K_*(C^*_{L,sq,0}(\H_X))=K_*(C^*_{L,q,0}(\H_X))= K_*(\CC_{L,0}(C_0(X))) =0.
\]
\end{lem}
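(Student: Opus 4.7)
The unified strategy is an Eilenberg swindle in the time parameter. Let $A$ denote any one of the four ideals, and for $s \ge 0$ define the shift endomorphism
\[
\sigma_s : A \longrightarrow A, \qquad \sigma_s(T)_t := T_{t+s}.
\]
I would first check that $\sigma_s$ maps each of the four algebras into itself: bounded uniform continuity, local compactness, decaying propagation, (strong) quasi-locality with time-dependent parameters, asymptotic commutator vanishing with $C_0(X)$, and asymptotic compact-set vanishing are all manifestly preserved under translation of the $t$-coordinate. Uniform continuity of $t \mapsto T_t$ makes $s \mapsto \sigma_s(T)$ norm-continuous, so $\{\sigma_s\}_{s \in [0,1]}$ is a norm-continuous homotopy of $\ast$-endomorphisms from $\mathrm{id} = \sigma_0$ to $\sigma := \sigma_1$, and hence $\sigma_\ast = \mathrm{id}_\ast$ on $K_\ast(A)$.

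Next I would construct a $\ast$-endomorphism
\[
\Phi : A \longrightarrow A, \qquad \Phi(T)_t := \bigoplus_{n=0}^\infty \sigma^n(T)_t = \bigoplus_{n=0}^\infty T_{t+n},
\]
realised on the ample $X$-module $\H_X \otimes \ell^2(\N)$ and then identified with an operator on $\H_X$ via the canonical isomorphism induced by stability of ample modules. The main obstacle is to show that $\Phi(T)$ really lies in $A$, and this is exactly where the ``$0$'' subscript of the ideal becomes critical. The defining property $\|\chi_K T_t\| \to 0$ (valid for all four ideals through the intersection with $\CC_{L,0}(C_0(X))$) upgrades by a standard cutoff argument to $\|f T_t\| \to 0$ for every $f \in C_0(X)$; hence $\|f T_{t+n}\| \to 0$ as $n \to \infty$, making $f \cdot \Phi(T)_t = \bigoplus_n f T_{t+n}$ a norm-convergent block-diagonal sum of compact operators with vanishing norms, and therefore compact. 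Every remaining defining condition of $A$ has the ``$t_0$-form''—for each $\varepsilon > 0$ there is $t_0$ giving an $\varepsilon$-bound on the relevant quantity for all $t \ge t_0$—so taking the supremum over $n$ and using $t + n \ge t \ge t_0$ preserves the bound; this transfers to $\Phi(T)$ the asymptotic local vanishing (via $\sup_{s \ge t} \|\chi_K T_s\| \to 0$), the decay of propagation (for the three $C^*_L$-family ideals), the asymptotic commutator vanishing with $C_0(X)$ (for $\CC_{L,0}$), and the (strong) quasi-locality conditions in their required $t_0$-formulations. Bounded uniform continuity in $t$ descends from that of $T$ with the same modulus.

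Finally, the decomposition $\Phi(T)_t = T_t \oplus \bigoplus_{n \ge 0} T_{(t+1)+n} = T_t \oplus \Phi(\sigma(T))_t$ runs the swindle: for a projection $p$ representing a class in $K_0(A)$ one computes
\[
[\Phi(p)] \,=\, [p] + [\Phi(\sigma(p))] \,=\, [p] + \Phi_\ast \sigma_\ast[p] \,=\, [p] + \Phi_\ast[p] \,=\, [p] + [\Phi(p)]
\]
in $K_0(A)$, which forces $[p] = 0$; the $K_1$ argument is entirely analogous after extending $\Phi$ unitally to $A^+$ and applying it to unitaries there. Hence $K_\ast(A) = 0$ for each of the four algebras.
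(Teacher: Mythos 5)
Your proof is correct and takes the same route the paper does: the paper invokes a standard Eilenberg swindle (citing Willett--Yu, Lemma 6.4.11) without further detail, and you have spelled out exactly that swindle -- shift homotopy $\sigma_s$, infinite-multiplicity endomorphism $\Phi$ on $\H_X\otimes\ell^2(\N)$, and the cancellation $[\Phi(p)]=[p]+[\Phi(p)]$. Your observation that the $0$-subscript condition (forcing $\|fT_t\|\to 0$) is what makes $\Phi(T)_t$ locally compact, and that all remaining defining conditions are of the uniform ``for $t\ge t_0$'' type and hence survive the supremum over shifts, is precisely the content being taken for granted in the paper's one-line citation.
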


\begin{proof}
The proof follows from a standard Eilenberg Swindle argument (see, \emph{e.g.}, \cite[Lemma 6.4.11]{willett2020higher}), hence omitted.
\end{proof}

\begin{proof}[Proof of Theorem \ref{thm:loc alg have same K-theory}]
The proof follows easily from Proposition \ref{prop:algebraic iso between loc alg}, Lemma \ref{lem:K-homology lemma 2} together with associated six-term short exact sequences. We omit the details.
\end{proof}

%

Now we consider the following commutative diagram:
 \[\xymatrix{
	&   \lim\limits_{d\to\infty}K_\ast(C^*_L(P_d(X)))  \ar[r]^-{i_*} \ar[d]_{e_*} & \lim\limits_{d\to\infty}K_\ast(\sL(P_d(X))) \ar[d]^{{(e_{sq})}_*}\\
	\lim\limits_{d\to\infty}K_*(P_d(X)) \ar[ru]^-{\mu_L}  \ar[r]^-{\mu} & \lim\limits_{d\to\infty}K_\ast(\c(P_d(X)))  \ar[r]^-{i_*} & \lim\limits_{d\to\infty}K_\ast(\cs(P_d(X))) ,}\] 
where $i_\ast$ are induced by inclusions. Recall that the strongly quasi-local coarse assembly map $\mu_{sq}$ is defined to be the composition
\[
\mu_{sq}: \lim_{d\to\infty}K_*(P_d(X)) \stackrel{\mu}{\longrightarrow} \lim_{d\to\infty}K_\ast(\c(P_d(X))) \stackrel{i_\ast}{\longrightarrow} \lim_{d\to\infty}K_\ast(\cs(P_d(X))) \cong K_*(\cs(X)),
\]
where the last isomorphism is from Corollary \ref{cor:strong coarse invariance}. The strongly quasi-local coarse Novikov/Baum-Connes conjecture asserts that $\mu_{sq}$ is injective/bijective.
%
%
%

From Proposition \ref{prop:loc iso to K-homology} and Theorem \ref{thm:loc alg have same K-theory}, we know that the composition 
\[
\mu_{L,sq}:=i_{\ast} \circ \mu_L: \lim\limits_{d\to\infty}K_*(P_d(X)) \longrightarrow \lim\limits_{d\to\infty}K_\ast(C^*_L(P_d(X))) \longrightarrow \lim\limits_{d\to\infty}K_\ast(\sL(P_d(X)))
\]
is an isomorphism. Consequently, we obtain the following:

\begin{cor}\label{cor:equiv for sqcbc}
Let $X$ be a discrete metric space with bounded geometry. Then the strongly quasi-local coarse Baum-Connes/Novikov conjecture holds for $X$ \emph{if and only if} the following homomorphism induced by evaluation at $0$:
\[
{(e_{sq})}_*: \lim\limits_{d\to\infty}K_\ast(\sL(P_d(X))) \longrightarrow \lim\limits_{d\to\infty}K_\ast(\cs(P_d(X)))
\]
is bijective/injective.
\end{cor}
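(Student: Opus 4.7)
The plan is quite short because essentially all the work has already been carried out in the preceding subsections: the strategy is to chase the commutative diagram displayed just above the corollary and then apply the fact (established via Proposition \ref{prop:loc iso to K-homology} and Theorem \ref{thm:loc alg have same K-theory}) that the composition $\mu_{L,sq}:=i_{\ast}\circ\mu_L$ is an isomorphism after passing to the direct limit as $d\to\infty$.

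More precisely, I would begin by reading off from the lower triangle and the right-hand square of the diagram the identity
\[
\mu_{sq} \;=\; i_{\ast}\circ\mu \;=\; i_{\ast}\circ e_{\ast}\circ\mu_L \;=\; (e_{sq})_{\ast}\circ i_{\ast}\circ\mu_L \;=\; (e_{sq})_{\ast}\circ\mu_{L,sq},
\]
where all groups are understood in the $d\to\infty$ limit and one invokes Corollary \ref{cor:strong coarse invariance} to identify $\lim_{d}K_{\ast}(\cs(P_d(X)))$ with $K_{\ast}(\cs(X))$. The only genuinely non-formal input needed at this step is commutativity of the right-hand square, but this is immediate: the inclusion $\L(X)\hookrightarrow\sL(X)$ intertwines the two pointwise evaluations at $t=0$ that define $e$ and $e_{sq}$, and $K$-theory is a functor.

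Given this factorisation, the conclusion is automatic. Since $\mu_{L,sq}$ is an isomorphism of abelian groups, $\mu_{sq}=(e_{sq})_{\ast}\circ\mu_{L,sq}$ is injective (respectively bijective) if and only if $(e_{sq})_{\ast}$ is injective (respectively bijective), which yields simultaneously the ``Novikov'' and ``Baum--Connes'' forms of the asserted equivalence. I do not anticipate any real obstacle here: the substantive content has been packaged into the isomorphism $\mu_{L,sq}$ and into the commutativity of the diagram, so the corollary falls out as a purely formal consequence.
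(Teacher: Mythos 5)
Your proof is correct and follows exactly the route the paper intends: you factor $\mu_{sq} = (e_{sq})_{\ast}\circ\mu_{L,sq}$ via the commutative diagram, invoke the isomorphism $\mu_{L,sq}$ obtained from Proposition \ref{prop:loc iso to K-homology} and Theorem \ref{thm:loc alg have same K-theory}, and conclude formally. This matches the paper's (implicit) argument.
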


We can similarly define a quasi-local coarse assembly map 
\[
\mu_{L,q}: \lim_{d\to\infty}K_*(P_d(X)) \longrightarrow K_*(C^*_{q}(X))
\]
and consider the corresponding quasi-local coarse Baum-Connes/Novikov conjecture. Moreover, according to Theorem \ref{thm:loc alg have same K-theory} again, we also have an analogous result to Corollary \ref{cor:equiv for sqcbc} for quasi-local algebras.

\section{Twisted Algebras}\label{sec:twisted alg}

The notion of twisted Roe algebras for metric spaces which admit a coarse embedding into a Banach space with Property (H) were introduced in \cite{CWY15}. They play a key role in the proof of the coarse Novikov conjecture for such spaces (\cite[Theorem 1.1]{CWY15}). In this section, we study a modified version of the twisted Roe algebras and introduce their quasi-local counterparts. These algebras will provide a bridge to link the $K$-theories of Roe algebras and strongly quasi-local ones.


Let us fix some notation following \cite{CWY15}. Throughout the section, let $X$ be a discrete metric space with bounded geometry which admits a coarse embedding $f: X \to V$ into a real Banach space $V$ with Property (H). From Definition \ref{defn:CWY15}, there exist increasing sequences of finite dimensional subspaces $ \{V_n\}_{n\in\N} $ of $V$ and $ \{W_n\}_{n\in\N} $ of a real Hilbert space $W$ such that:
\begin{enumerate}
 \item each $V_n$ and $W_n$ are even dimensional, and $\bigcup_{n=1}^\infty V_n$ is dense in $V$;
 \item there exists a uniformly continuous map $ \psi:S(\bigcup_{n=1}^\infty V_n) \rightarrow S(\bigcup_{n=1}^\infty W_n) $ such that $\psi|_{S(V_n)}$ is a homeomorphism onto $ S(W_n) $ for any $n\in\N$. 
\end{enumerate}

By a slight modification, we may assume without loss of generality that 
\[
f(X) \, \subset \, \bigcup_{n=1}^\infty V_n.
\]
For each $d \geq 0$, the coarse embedding $f: X \to \bigcup_{n=1}^\infty V_n$ can be extended to a continuous coarse embedding $f: P_d(X) \to \bigcup_{n=1}^\infty V_n$ by affine extension (see \cite[Section 3.1]{CWY15}). We also take a countable dense subset $Z_d \subset P_d(X)$ such that $Z_d \subseteq Z_{d'}$ whenever $d<d'$.

For each $n\in \N$, let $\Cliff(W_n)$ be the complex Clifford algebra of $ W_n $ with respect to the relation $w^2=\|w\|^2$ for all $w\in W_n$. Denote
\[
  \A_n:=C_0(V_n)\otimes\Cliff(W_n) \cong C_0(V_n,\Cliff(W_n)).
\]
We follow the notation from Section \ref{sec:sq algebras} that $\K=\K(\HH)$ denotes the $C^*$-algebra of compact operators on the fixed infinite dimensional separable Hilbert space $\HH$. For a function $h \in \A_n \otimes \K \cong C_0(V_n,\Cliff(W_n)\otimes \K) $, we define the support of $h$ to be
\[
\supp(h)=\overline{\big\{ x\in V_n: h(x)\neq 0 \big\}}.
\]

Now we consider two $C^*$-algebras which serve as coefficients for twisted algebras. Let $\prod_{n=1}^\infty (\A_n\otimes\K)$ be the $C^*$-algebra direct product and $\bigoplus_{n=1}^\infty (\A_n\otimes\K)$ be the $C^*$-algebra direct sum. Denote the quotient algebra by 
\[
\mathcal{Q}((\mathcal{A}_n\otimes\mathfrak{K})_{n\in \N}):= \frac{\prod_{n=1}^\infty (\A_n\otimes\K)}{\bigoplus_{n=1}^\infty (\A_n\otimes\K)},
\]
where the norm of $[(h_1, \cdots, h_n, \cdots)] \in \mathcal{Q}((\mathcal{A}_n\otimes\mathfrak{K})_{n\in \N})$ can be calculated by:
\[
\big\| [(h_1, \cdots, h_n, \cdots)] \big\| = \limsup_{n\to \infty} \|h_n\|_{\A_n\otimes\K}.
\]
Also denote the $C^*$-algebra
\[
\mathcal{Q}((\A_n)_{n\in \N}) := \frac{\prod_{n=1}^\infty \A_n}{\bigoplus_{n=1}^\infty\A_n}.
\]
Since $\K$ is nuclear, we have:
\begin{equation}\label{EQ:q into Q}
\mathcal{Q}((\A_n)_{n\in \N}) \otimes \K \cong \frac{(\prod_{n=1}^\infty \A_n) \otimes \K}{(\bigoplus_{n=1}^\infty\A_n) \otimes \K} \hookrightarrow \frac{\prod_{n=1}^\infty (\A_n\otimes\K)}{\bigoplus_{n=1}^\infty (\A_n\otimes\K)} = \Q,
\end{equation}
where the second inclusion is induced by the natural inclusion 
\[
\big(\prod_{n=1}^\infty \A_n\big) \otimes \K \hookrightarrow \prod_{n=1}^\infty (\A_n\otimes\K).
\]
Hence in the sequel, we regard $\q$ as a $C^*$-subalgebra of $\Q$ via (\ref{EQ:q into Q}). Also for short we write $\Q$ and $\mathcal{Q}((\A_n)_{n})$ instead of $\mathcal{Q}((\mathcal{A}_n\otimes\mathfrak{K})_{n\in \N})$ and $\mathcal{Q}((\A_n)_{n\in \N})$, respectively.

%



\subsection{Preliminaries on Hilbert modules}\label{ssec:Hilbert modules}

To define twisted algebras, we need to recall some facts from the theory of Hilbert module. Readers can refer to standard textbooks (\emph{e.g.}, \cite{Lan95}) for basic notions and more details. 

Let $Z$ be a countable set and $A$ be a $C^*$-algebra. We denote a map $Z \to A$ which maps $x \mapsto a_x$ by $\sum_{x\in Z} a_x[x]$. Let
\[
\ell^2(Z;A) := \big\{ \sum_{x\in Z} a_x[x] : a_x \in A \mbox{~and~} \sum_{x\in Z}a_x^*a_x \mbox{~converges~in~norm}\big\}.
\]
Then $\ell^2(Z;A)$ is a Hilbert $A$-module induced by:
\[
\big\langle \sum_{x\in Z} a_x[x], \sum_{x\in Z}b_x[x] \big\rangle := \sum_{x\in Z}a_x^*b_x,
\]
\[
\big(\sum_{x\in Z} a_x[x] \big) \cdot a := \sum_{x\in Z}(a_xa)[x]
\]
for any $a\in A$. Let $\B(\ell^2(Z;A))$ be the $C^*$-algebra of all module homomorphisms from $\ell^2(Z;A)$ to itself for which there exists an adjoint module homomorphism, and $\K(\ell^2(Z;A))$ be the norm closure of all finite-rank operators in $\B(\ell^2(Z;A))$. Note that $\K(\ell^2(Z;A))$ is a $\ast$-ideal in $\B(\ell^2(Z;A))$.

Moreover, note that there is a $\ast$-representation $\rho: \ell^\infty(Z) \to \B(\ell^2(Z;A))$ by pointwise scalar multiplication. For abbreviation, we will write $fT$ instead of $\rho(f)T$ for $f\in \ell^\infty(Z)$ and $T\in \B(\ell^2(Z;A))$.

For each $T \in \B(\ell^2(Z;A))$, we can associate a matrix form $(T_{xy})_{x,y\in Z}$ as follows. For $x,y\in Z$, we define a linear map $T_{xy}: A\to A$ by
\[
T_{xy}(a) := \big( T(a[y]) \big)(x) \mbox{~for~} a\in A,
\]
called the \emph{$xy$-matrix entry of $T$}. It is easy to check that $\|T_{xy}\|_{\B(A)}=\|\chi_{\{x\}} T \chi_{\{y\}}\|_{\B(\ell^2(Z;A))}$, and each $T_{xy}$ is indeed in the multiplier algebra $\mathcal{M}(A)$. Furthermore, it follows from $T\in \B(\ell^2(Z;A))$ that $T$ can be recovered by its matrix form via matrix multiplication. More precisely, for each $\sum_{x\in Z} a_x[x] \in \ell^2(Z;A)$ we have:
\begin{enumerate}
 \item for $x\in Z$, the sum $\sum_{y\in Z} T_{xy}(a_y)$ converges in norm;
 \item the map $x \mapsto \sum_{y\in Z} T_{xy}(a_y)$ belongs to $\ell^2(Z;A)$;
 \item $T(\sum_{x\in Z} a_x[x]) = \sum_{x\in Z} (\sum_{y\in Z} T_{xy}(a_y)) [x]$.
\end{enumerate}

%

We need the following auxiliary lemma to define the Bott maps in Section \ref{ssec:sq Bott}:

\begin{lem}\label{lem:the norm on Hilbert module}
Let $ (\pi,\H) $ be a faithful representation of a $C^*$-algebra $A$ (where the Hilbert space $ \H $ is not necessarily separable), and $Z$ be a countable set. For any $T \in \B(\ell^2(Z;\H))$, we write its matrix form as $T=(T_{xy})_{x,y\in Z}$ where $T_{xy} \in \B(\H)$. Assume that each $T_{xy} \in \mathcal{M}(A)$. Define a linear operator $\Psi(T)$ on the Hilbert module $\ell^2(Z;A)$ by:
\begin{equation}\label{EQ:norm control}
\Psi(T)\big(\sum_{x \in Z}a_x [x]\big) := \sum_{x\in Z} \big( \sum_{y\in Z} T_{xy}(a_y) \big)[x]
\end{equation}
for any $\sum_{x \in Z}a_x [x] \in \ell^2(Z;A)$. Then $\Psi(T)$ is well-defined and belongs to $\B(\ell^2(Z;A))$. Moreover, we have $\|\Psi(T)\| \leq \|T\|$ and $T_{xy}=\Psi(T)_{xy}$ for any $x,y\in Z$.
\end{lem}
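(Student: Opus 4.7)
The plan is to build $\Psi(T)$ as a limit of finite-matrix truncations, leveraging that the faithful representation $\pi$ becomes isometric when extended to matrices with multiplier entries, and then to pass to the limit on the dense subspace of finitely supported vectors.

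First, I would replace $\H$ by the essential subspace $\overline{\pi(A)\H}$ (on which $\pi$ remains faithful), so that $\pi$ extends uniquely to a faithful $*$-homomorphism $\bar\pi:\mathcal{M}(A)\hookrightarrow\B(\H)$. Setting $\tilde T_{xy}:=\bar\pi^{-1}(T_{xy})\in\mathcal{M}(A)$, the entrywise extension $\bar\pi_F:M_F(\mathcal{M}(A))\hookrightarrow\B(\H^F)$ is a faithful $*$-homomorphism between $C^*$-algebras for every finite $F\subseteq Z$, hence an isometry. Using the natural identifications $M_F(\mathcal{M}(A))\cong\mathcal{M}(M_F(A))\cong\B(\ell^2(F;A))$, the matrix $\Psi_F(T):=(\tilde T_{xy})_{x,y\in F}$ defines an adjointable operator on $\ell^2(F;A)$ whose norm equals the norm of the compression $P_FTP_F$ on $\ell^2(F;\H)$, so $\|\Psi_F(T)\|\le\|T\|$, where $P_F$ denotes the projection onto $\ell^2(F;\H)$.

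Next, I would use the finite truncations to define $\Psi(T)$ on the dense subspace of finitely supported vectors via formula (\ref{EQ:norm control}). For such $\xi=\sum_{y\in F_0}a_y[y]$ with $F_0$ finite, each coordinate $b_x:=\sum_{y\in F_0}\tilde T_{xy}(a_y)$ is a finite sum in $A$, and for any finite $F\supseteq F_0$ one has $\Psi_F(T)\xi=\sum_{x\in F}b_x[x]$ with $\bigl\|\sum_{x\in F}b_x^*b_x\bigr\|_A\le\|T\|^2\|\xi\|^2$. The crucial step is then to show that the positive increasing net $\sigma_F:=\sum_{x\in F}b_x^*b_x$ is Cauchy in $A$; once this is done, its norm limit lies in $A$ and certifies $\Psi(T)\xi\in\ell^2(Z;A)$ with $\|\Psi(T)\xi\|\le\|T\|\|\xi\|$, so that $\Psi(T)$ extends by continuity to a bounded operator on all of $\ell^2(Z;A)$ with $\|\Psi(T)\|\le\|T\|$. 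Adjointability is obtained by applying the same construction to $T^*$ (whose matrix entries $T_{yx}^*$ again lie in $\mathcal{M}(A)$) and checking the adjoint relation $\langle\Psi(T^*)\eta,\xi\rangle_A=\langle\eta,\Psi(T)\xi\rangle_A$ on finitely supported vectors, while the matrix-entry identity $\Psi(T)_{xy}=\tilde T_{xy}$ follows by evaluating $\Psi(T)$ on vectors of the form $a[y]$.

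The delicate part is the norm convergence of $\{\sigma_F\}$ in $A$, since a bounded increasing net of positive elements in a general $C^*$-algebra need not converge in norm. I would attack it by relating the tails of the net to compressions of the auxiliary operator $L_\xi:\H\to\ell^2(Z;\H)$ defined by $L_\xi(v):=\sum_{y\in F_0}[y]\otimes\pi(a_y)v$; faithfulness of $\pi$ identifies $\|\sigma_{F'}-\sigma_F\|_A$ with $\|(P_{F'}-P_F)TL_\xi\|^2$, so Cauchyness of $\{\sigma_F\}$ reduces to a tightness property of $TL_\xi$. The multiplier hypothesis on the $T_{xy}$, together with the fact that $L_\xi$ factors through the finite-rank subspace $\ell^2(F_0;\H)$, will be the key input that forces this tightness and completes the argument.
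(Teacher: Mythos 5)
Your core estimate --- realizing $M_F(\mathcal{M}(A)) \cong \mathcal{M}(M_F(A)) \cong \B(\ell^2(F;A))$ for finite $F$, pushing it isometrically into $\B(\H^F)$ via the (nondegenerate) faithful representation, and deducing $\|\Psi_F(T)\| = \|P_F T P_F\| \le \|T\|$ --- is exactly the computation in the paper's proof, and your adjointability check via $T^*$ fills in a step the paper leaves implicit. Up to there you are on track, and arguably more careful than the paper itself.

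The problem is precisely the step you flag as delicate: passing from the uniform truncation bound to norm convergence of $\sigma_F=\sum_{x\in F}b_x^*b_x$ in $A$. The paper's proof stops at the truncation estimate and says nothing about this, so you are not behind the paper here --- but your proposed fix does not go through. First, a literal slip: $\ell^2(F_0;\H)$ is not finite-rank when $\H$ is infinite-dimensional, so ``$L_\xi$ factors through a finite-rank subspace'' gives no compactness and hence no reason for $\|(1-P_F)TL_\xi\|\to 0$. Second, and more fundamentally, the multiplier hypothesis on the entries alone does not force the tightness you need. Take $A=C_0((0,1])$, $\H=L^2((0,1])$, $Z=\{0,1,2,\dots\}$, and a single-column operator $T$ with $T_{n,0}=c_n\in C_b((0,1])=\mathcal{M}(A)$ chosen so that $\sum_{n\le N}|c_n|^2=s_N$ is a continuous increasing approximation of the lower-semicontinuous $s=\chi_{(1/2,1]}$. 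Then $T$ is bounded with all entries in $\mathcal{M}(A)$, yet for $a\in A$ with $a(1/2)\ne 0$ the series $\sum_n a^*c_n^*c_n a = |a|^2\sum_n|c_n|^2$ has pointwise limit $|a|^2 s$, which is discontinuous, so the series cannot converge in $A$ and $\Psi(T)(a[0])\notin\ell^2(Z;A)$. The paper's actual uses of the lemma avoid this because the entries have the special form $\mathrm{Id}_\H\otimes k$ with $k$ a compact operator and the coefficient algebra carries a $\K$-tensor factor, so the strongly convergent sums $\sum_x T_{xy}^*T_{xy'}$ become norm-convergent after compression by elements of $A$. That structural input is missing from your argument, and without it the tightness claim in your last paragraph is simply not available.
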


\begin{proof}
For any finite subset $F \subset Z$ and $\sum_{x\in F} a_x[x] \in \ell^2(X;A)$ with $\|\sum_{x\in F} a_x^*a_x\|=1$, we consider a vector $b\in \ell^2(F,A)$ by $b_x:=\sum_{y\in F}T_{xy}a_y$ for $x\in F$. Then we have:
\[
\big\|\sum_{x\in F}b_x[x]\big\|^2 =\big\|\sum_{x\in F}b_x^*b_x\big\|=\sup\limits_{\|\xi\|=1} \big\langle\sum\limits_{x\in F}b_x^*b_x\xi,\xi \big\rangle=\sup_{\|\xi\|=1}\sum_{x\in F}{\|b_x\xi\|}^2.
\]
For any $\xi \in \H$ with $ \|\xi\|=1 $, we set $ \xi'=\sum_{x\in F}(a_x\xi)[x] \in \ell^2(Z;\H)$. Then:
\[
\|\xi'\|^2=\sum_{x\in F}{\|a_x\xi\|}^2 = \big\langle \big(\sum_{x\in F}a_x^*a_x\big)\xi,\xi \big\rangle \leq \big\|\sum_{x\in F}a_x^*a_x\big\| \cdot \|\xi\|^2=1,
\]
and hence
\[
\sum_{x\in F}{\|b_x\xi\|}^2 = \big\|\sum_{x\in F}(b_x\xi)[x]\big\|^2 =  \|T\xi'\|^2 \leq \|T\|^2 \cdot \|\xi'\|^2 \leq \|T\|^2.
\]
This concludes the proof.
\end{proof}

\subsection{Twisted Roe algebras}

Here we recall the original definition of twisted Roe algebras from \cite[Section 3.1]{CWY15}. We also introduce a restricted version and study their relations. This will play a key role in the sequel to study the Bott map for strongly quasi-local algebras and to prove the main theorem. Let us fix a $d\geq 0$, and recall that $Z_d$ is a countable dense subset in the Rips complex $P_d(X)$. 

\begin{defn}[{\cite[Definition 3.1]{CWY15}}]\label{defn:original algebraic twisted Roe}
Define $ \C[P_d(X),\Q] $ to be the set of bounded functions $T: Z_d \times Z_d \to \Q$ satisfying the following conditions:
\begin{enumerate}
	\item for any bounded subset $B\subseteq X$, the set $\big \{(x,y)\in (B\times B) \cap (Z_d\times Z_d) : T(x,y) \neq 0 \big\}$ is finite;
	\item there exists an $ L>0 $ such that $\#\{x\in Z_d: T(x,y) \neq 0\} < L$ and $\# \{y\in Z_d : T(x,y)\neq 0\} < L $ for each $x,y\in Z_d$;
	\item there exists an $ R\ge 0 $ such that $ T(x,y)=0 $ whenever $ d(x,y)>R $ for $x,y\in Z_d$;
	\item there exists an $ r>0 $ such that for any $ x,y \in Z_d $, $T(x,y)$ is of the form $[(h_1,\cdots,h_n,\cdots)] $ satisfying $ \supp(h_n) \subseteq B_{V_n}(f(x),r)$ for $n \in \N$ large enough such that $ f(x)\in V_n$.
\end{enumerate}	
\end{defn}

The algebraic structure for $\C[P_d(X),\Q]$ is defined by regarding elements $T$ as $Z_d \times Z_d$-matrices. The $\ast$-structure for $\C[P_d(X),\Q]$ is defined by 
\[
(T^*)(x,y):=[(h_1^*, \cdots, h_n^*, \cdots)]
\]
where $T(y,x)=[(h_1, \cdots, h_n, \cdots)]$ for all $x,y\in Z_d$. 

Denote by $E_1$ the Hilbert module $\ell^2(Z_d; \Q)$. It is clear that the $\ast$-algebra $\C[P_d(X),\Q]$ has an action on $E_1$ by the formula
\begin{equation}\label{EQ:action on E1}
T\big(\sum_{x\in Z_d}a_x[x]\big)=\sum_{x\in Z_d}\big(\sum\limits_{y\in Z_d}T(x,y)a_y\big)[x]
\end{equation}
for $T \in \C[P_d(X),\Q]$ and $\sum_{x\in Z_d}a_x[x] \in \ell^2(Z_d; \Q)$. Also note that $T$ is an adjointable module homomorphism, hence defines an element in $\B(E_1)$. It is easy to check that the $xy$-matrix entry of $T\in \B(E_1)$ coincides with $T(x,y)$.

The following is the twisted Roe algebra introduced in \cite[Definition 3.2]{CWY15}:

\begin{defn}[\cite{CWY15}]\label{defn:original twisted Roe algebra}
The \emph{twisted Roe algebra} $C^*(P_d(X),\Q)$ is defined to be the norm closure of $\C[P_d(X),\Q]$ in $ \B(E_1) $.
\end{defn}

Now we introduce a modified version of the twisted Roe algebra. The idea is to restrict the coefficient algebra $\Q$ to the subalgebra $\q$.

\begin{defn}\label{defn:algebraic restricted twisted Roe algebra}
Define $ \C[P_d(X),\q] $ to be the $\ast$-algebra consisting of all bounded functions $T: Z_d \times Z_d \to \q$ satisfying condition (1) $\thicksim$ (4) in Definition \ref{defn:original algebraic twisted Roe}.
\end{defn}

Note that the only difference between the $ \ast $-algebras $ \C[P_d(X),\Q] $ and $ \C[P_d(X),\q] $ is the requirement on ranges of their elements. It is also clear that $ \C[P_d(X),\q] $ is a $\ast$-subalgebra of $ \C[P_d(X),\Q] $.

We denote by $E$ the Hilbert module $\ell^2(Z_d; \q)$. Note that the algebra $\C[P_d(X),\q]$ has an action on $E$ by the same formula (\ref{EQ:action on E1}) and similarly, $\C[P_d(X),\q]$ can be regarded as a subalgebra in $\B(E)$. This leads to the following:

\begin{defn}\label{defn:restricted twisted Roe algebra}
The \emph{restricted twisted Roe algebra} $C^*(P_d(X),\q)$ is defined to be the norm closure of $\C[P_d(X),\q]$ in $\B(E)$.
\end{defn}

\begin{rem}
The motivation to introduce the restricted version of the twisted Roe algebra is to pave the way for the Bott maps constructed in Section \ref{sec:Bott}. We will see that the Bott map can be defined for strongly quasi-local algebras using the smaller coefficient algebra $\q$, while it is unclear to us whether this can be done in terms of the original $\Q$. See Remark \ref{rem:diff between twisted Roe} for more details.
\end{rem}

To study the relation between the two twisted Roe algebras, we need an auxiliary algebra which is a subalgebra in $C^*(P_d(X),\Q)$. Note that the $\ast$-algebra $\C[P_d(X),\q]$ also has an action on the module $E_1$ by the same formula (\ref{EQ:action on E1}), hence we can consider its norm closure in $\B(E_1)$:
\[
\mathfrak{A} :=\overline{\C[P_d(X),\q]}^{\|\cdot\|_{\B(E_1)}}.
\]
It is clear that $\mathfrak{A}$ is a $C^*$-subalgebra of $C^*(P_d(X),\Q)$. The following lemma shows that $\mathfrak{A}$ is isomorphic to the restricted twisted Roe algebra, hence builds a bridge between $C^*(P_d(X),\q)$ and $C^*(P_d(X),\Q)$.

\begin{lem}\label{lem:restricted twisted Roe and A}
The identity map on $\C[P_d(X),\q]$ can be extended continuously to a $\ast$-isomorphism from $C^*(P_d(X),\q)$ to $\mathfrak{A}$. In particular, we have $\|T\|_{\B(E)} = \|T\|_{\B(E_1)}$ for any $T \in \C[P_d(X),\q]$.
\end{lem}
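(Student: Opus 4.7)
\medskip

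\noindent\textbf{Proof proposal.} The identity map on $\C[P_d(X), \q]$ is manifestly a $\ast$-homomorphism (the algebraic and involutive structures on $\C[P_d(X), \q]$ are defined intrinsically via matrix multiplication and entrywise involution in $\q$, and do not depend on which of $E$ or $E_1$ one lets the algebra act upon). So the entire content of the lemma is the norm identity $\|T\|_{\B(E)} = \|T\|_{\B(E_1)}$ for $T \in \C[P_d(X),\q]$; once this is established, the identity extends by continuity to an isometric $\ast$-isomorphism between the completions.

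To prove the norm identity, the plan is to use a faithful representation of the larger coefficient algebra $\Q$ as a common testing ground. First, fix a faithful non-degenerate representation $\pi: \Q \to \B(\H_0)$ on some (not necessarily separable) Hilbert space $\H_0$. Because $\q$ is embedded as a $C^*$-subalgebra of $\Q$ via (\ref{EQ:q into Q}), the restriction $\pi|_{\q}$ is also a faithful representation of $\q$ on $\H_0$. For any $C^*$-algebra $A$ together with a faithful representation $\sigma: A \to \B(\H_0)$, the standard "amplification" construction gives an isometric inclusion
\[
\iota_A : \B(\ell^2(Z_d; A)) \hookrightarrow \B(\ell^2(Z_d; \H_0)),
\]
which on the dense subspace of finitely-supported vectors sends $T$ to the matrix operator with entries $\sigma(T_{xy})$; isometry follows from the fact that the Hilbert-module tensor product $\ell^2(Z_d;A) \otimes_A \H_0$ is canonically isomorphic to $\ell^2(Z_d;\H_0)$, and the representation $T \mapsto T \otimes \Id_{\H_0}$ on such an interior tensor product is faithful whenever $\sigma$ is.

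Applying this with $A = \q$ and $A = \Q$ simultaneously (using the same $\H_0$ and the representations $\pi|_{\q}$ and $\pi$), we obtain isometric inclusions $\iota_\q: \B(E) \hookrightarrow \B(\ell^2(Z_d;\H_0))$ and $\iota_\Q: \B(E_1) \hookrightarrow \B(\ell^2(Z_d;\H_0))$. For any $T \in \C[P_d(X),\q]$, the matrix entries $T(x,y)$ lie in $\q \subseteq \Q$, and the actions of $\iota_\q(T)$ and $\iota_\Q(T)$ on $\ell^2(Z_d;\H_0)$ are both described by the single formula
\[
\xi = \sum_{y \in Z_d} \xi_y[y] \;\longmapsto\; \sum_{x \in Z_d} \Bigl( \sum_{y \in Z_d} \pi(T(x,y)) \xi_y \Bigr)[x],
\]
where convergence is ensured by the finite-propagation and row/column finiteness conditions (1)--(3) of Definition \ref{defn:original algebraic twisted Roe}. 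In other words, $\iota_\q(T) = \iota_\Q(T)$ as operators on $\ell^2(Z_d;\H_0)$. Since both $\iota_\q$ and $\iota_\Q$ are isometric, we conclude
\[
\|T\|_{\B(E)} = \|\iota_\q(T)\| = \|\iota_\Q(T)\| = \|T\|_{\B(E_1)}.
\]

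The main technical point to pin down carefully is the isometric embedding $\B(\ell^2(Z_d;A)) \hookrightarrow \B(\ell^2(Z_d;\H_0))$ and the compatibility of the two amplifications on matrix operators with entries in $\q$. This is a minor extension of Lemma \ref{lem:the norm on Hilbert module}: that lemma gives the inequality $\|\Psi(T)\| \le \|T\|$ going from the Hilbert space to the Hilbert module, while here we need the reverse direction, which follows from standard Hilbert $C^*$-module theory (the interior tensor product with a faithful representation is faithful on adjointable operators; see e.g. Lance's book). Once the norm identity is in hand, the closures of $\C[P_d(X),\q]$ in $\B(E)$ and in $\B(E_1)$ are abstractly isometrically isomorphic via the identity, which is exactly the claim $C^*(P_d(X),\q) \cong \mathfrak{A}$.
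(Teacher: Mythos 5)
Your proof is correct, but it proceeds along a genuinely different route from the paper's. You establish the norm identity $\|T\|_{\B(E)}=\|T\|_{\B(E_1)}$ directly by mapping both $\B(E)$ and $\B(E_1)$ isometrically into $\B(\ell^2(Z_d;\H_0))$ via a common faithful representation $\pi$ of $\Q$ and the interior tensor product construction, then observing that for $T$ with entries in $\q$ the two amplifications literally coincide; the isomorphism between the two completions follows automatically. The paper instead goes in the opposite direction: it constructs a restriction map $\Phi:\mathfrak{A}\to C^*(P_d(X),\q)$, $T\mapsto T|_E$ (using that $E$ is a closed, isometrically embedded submodule of $E_1$ so that $T(E)\subseteq E$ and $T|_E$ is adjointable with $\|T|_E\|\le\|T\|$), shows $\Phi$ is surjective since its image contains the dense subalgebra, and shows injectivity by a matrix-entry argument: if $\Phi(T)=0$ then each $T_{xy}$, being a norm limit in $\mathcal M(\Q)$ of the entries $(T_n)_{xy}\in\q$, lies in $\q$ and annihilates $\q$, hence vanishes. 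The norm identity is then a consequence of $\Phi$ being an injective $*$-homomorphism. Your approach front-loads the analytic content into a standard fact about Hilbert $C^*$-modules (interior tensor product with a faithful representation induces a faithful, hence isometric, representation of $\mathcal L(M)$, cf.\ Lance), which makes the argument shorter and more conceptual; the paper's approach is more elementary and self-contained, avoiding interior tensor products at the cost of an explicit injectivity check. One small point worth tightening in your write-up: the canonical isomorphism $\ell^2(Z_d;A)\otimes_A\H_0\cong\ell^2(Z_d;\H_0)$ requires the representation $\sigma$ to be non-degenerate, and while one may choose $\pi$ non-degenerate on $\Q$, the restriction $\pi|_{\q}$ need not be non-degenerate on $\H_0$. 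The fix is routine --- $E\otimes_{\q}\H_0\cong\ell^2(Z_d;\H_0')$ where $\H_0'=\overline{\pi(\q)\H_0}$, and since the matrix entries $\pi(T(x,y))$ for $T(x,y)\in\q$ are supported on $\H_0'$ (by an approximate-unit argument), $\iota_{\Q}(T)$ is $\iota_{\q}(T)\oplus 0$ with respect to $\ell^2(Z_d;\H_0)=\ell^2(Z_d;\H_0')\oplus\ell^2(Z_d;\H_0'^{\perp})$, so the norms still agree --- but it deserves an explicit sentence.
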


\begin{proof}
To prove the lemma, we consider the restriction map as follows. For any $T \in \mathfrak{A}$, we claim that $T(E) \subseteq E$. In fact, assume that there exists a sequence $\{T_n\}_n$ in $\C[P_d(X),\q]$ converging to $T$ in $\B(E_1)$. It is clear that $T_n(E) \subseteq E$ for each $n$. Hence for any $\xi \in E \subseteq E_1$, we have $T\xi = \lim_{n\to \infty}T_n(\xi) \in E$ since $E$ is closed. This leads to a $C^*$-homomorphism 
\[
\Phi: \mathfrak{A} \to C^*(P_d(X),\q), \quad T \mapsto T|_E.
\]

It is clear that $\Phi$ is surjective since its image contains the dense subalgebra $\C[P_d(X),\q]$, hence it suffices to show that $\Phi$ is injective. Assume that $\Phi(T)=0$ for some $T \in \mathfrak{A}$, and take a sequence $\{T_n\}_n$ in $\C[P_d(X),\q]$ converging to $T$ in $\B(E_1)$. For any $x,y\in Z_d$, it is clear that the matrix entry $(T_n)_{xy}$ converges to $T_{xy}$ in $\mathcal{M}(\Q)$. Since each $(T_n)_{xy} \in \q$, which is regarded as an element in $\mathcal{M}(\Q)$ via the inclusion $i: \q \to \mathcal{M}(\Q)$, we obtain that $T_{xy}$ also belongs to the image of $i$. Hence $T_{xy}(\q)=0$ implies that $T_{xy}=0$, and therefore we obtain that $T=0$. 
\end{proof}

As a direct corollary, we obtain the following:

\begin{cor}\label{cor:relation between twisted Roe}
There is a $C^*$-monomorphism 
\[
\iota: C^*(P_d(X),\q) \to C^*(P_d(X),\Q)
\]
induced by the inclusion $\C[P_d(X),\q] \hookrightarrow \C[P_d(X),\Q]$.
\end{cor}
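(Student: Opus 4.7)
The statement is essentially a direct packaging of Lemma \ref{lem:restricted twisted Roe and A}, so my plan is to use that lemma as the engine and simply transport the inclusion of dense subalgebras through it.

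First I would recall the setup: by definition $\mathfrak{A}$ is the norm closure of $\C[P_d(X),\q]$ inside $\B(E_1)$, while $C^*(P_d(X),\Q)$ is the norm closure of $\C[P_d(X),\Q]$ in $\B(E_1)$. Since $\C[P_d(X),\q]$ is a $\ast$-subalgebra of $\C[P_d(X),\Q]$ (both viewed as operators on $E_1$ via the same formula \eqref{EQ:action on E1}), we have the inclusion $\mathfrak{A} \subseteq C^*(P_d(X),\Q)$ as $C^*$-subalgebras of $\B(E_1)$. In particular, the inclusion $j: \mathfrak{A} \hookrightarrow C^*(P_d(X),\Q)$ is a $C^*$-monomorphism.

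Next, Lemma \ref{lem:restricted twisted Roe and A} furnishes a $\ast$-isomorphism $\Phi: \mathfrak{A} \to C^*(P_d(X),\q)$ which restricts to the identity on $\C[P_d(X),\q]$. Defining $\iota := j \circ \Phi^{-1}$ then yields a $C^*$-homomorphism $\iota: C^*(P_d(X),\q) \to C^*(P_d(X),\Q)$ which agrees with the inclusion $\C[P_d(X),\q] \hookrightarrow \C[P_d(X),\Q]$ on the dense subalgebra, and which is injective as a composition of an isomorphism and an inclusion.

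There is essentially no obstacle here: the genuine content was already expended in Lemma \ref{lem:restricted twisted Roe and A}, where one had to check that operators in $\mathfrak{A}$ preserve $E \subseteq E_1$ and that the restriction-to-$E$ map is injective (using that matrix entries of limits in $\B(E_1)$ can be recovered in the multiplier algebra and that $\q$ sits faithfully in $\mathcal{M}(\Q)$). Once that isomorphism is in hand, the corollary is formal. The only tiny thing worth emphasising explicitly is that the norm equality $\|T\|_{\B(E)} = \|T\|_{\B(E_1)}$ on $\C[P_d(X),\q]$ (the ``In particular'' part of Lemma \ref{lem:restricted twisted Roe and A}) is precisely what guarantees that the identity on $\C[P_d(X),\q]$ extends continuously to the map $\iota$.
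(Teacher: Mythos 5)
Your proof is correct and matches the paper's intended route exactly: the paper labels this a ``direct corollary'' of Lemma \ref{lem:restricted twisted Roe and A}, and your argument is precisely the unpacking of that claim, composing $\Phi^{-1}$ with the inclusion $\mathfrak{A} \hookrightarrow C^*(P_d(X),\Q)$. The only substantive verification is the one you correctly flag as already done in the lemma.
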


\subsection{Twisted quasi-local algebras}

Now we introduce a quasi-local version of the twisted algebras. Due to the lack of control on propagations, we need a slightly different approach.

First note that any continuous bounded function $h:V \to \C$ can be regarded as an element in the multiplier algebra $\mathcal{M}(\mathcal{Q}((\A_n)_{n}))$ by 
\[
h\cdot [(h_1,\cdots,h_n,\cdots)] = [(h|_{V_1}\cdot h_1, \cdots, h|_{V_n}\cdot h_n, \cdots)],
\]
where $[(h_1,\cdots,h_n,\cdots)] \in \mathcal{Q}((\A_n)_{n})$. Hence we can also regard $h$ as an element in $\B(E)$ by setting its matrix entry to be:
\[
h_{xy}:=
\begin{cases}
~h \otimes \Id_{\HH} \, (\in \mathcal{M}(\q)), & x=y; \\ 
~0, & \mbox{otherwise}.
\end{cases}
\]

\begin{defn}\label{defn:twisted strongly quasi-local algebra}
The \emph{twisted quasi-local algebra} $C^*_q(P_d(X),\q)$ is defined to be the subset in $\B(E)$ consisting of elements $T$ satisfying the following conditions: 
\begin{enumerate}
     \item each matrix entry of $T$ belongs to $\q$;
     \item for any compact $K \subseteq P_d(X)$, we have that $\chi_K T$ and $T \chi_K$ belongs to $\K(E)$;
     \item for any $\epsilon>0$ there is an $r_1$ such that for any Borel subsets $ C,D\subseteq P_d(X) $ with $ d(C,D)>r_1 $, we have $ \|\chi_CT\chi_D\|< \epsilon $ and $ \|\chi_DT\chi_C\|< \epsilon $;
     \item for any $ \epsilon>0 $, there is an $ r_2 $ such that for any Borel subset $ C\subseteq P_d(X) $ and $h\in C_b(V)_1$ with $ d(f(C),\supp(h))>r_2 $, we have $ \|\chi_CTh\|<\epsilon $.
\end{enumerate}
\end{defn}

\begin{rem}\label{rem:twisted sq alg star condition}
It follows from condition (3) and (4) in Definition \ref{defn:twisted strongly quasi-local algebra} that for any $T \in C^*_q(P_d(X),\q)$ and $\epsilon>0$, there exists $r'_2>0$ such that for any Borel subset $ C\subseteq P_d(X) $ and $h\in C_b(V)_1$ with $ d(f(C),\supp(h))>r'_2 $, we have $ \|hT\chi_C\|<\epsilon $. In fact, assume that $\rho_+: [0,\infty) \to [0,\infty)$ is a proper function such that $\|f(x)-f(y)\| \leq \rho_+ (d(x,y))$ for any $x,y\in P_d(X)$. For the given $\epsilon>0$, there exist $r_1, r_2>0$ satisfying condition (3) and (4) therein for $\epsilon/2$. We take $r'_2:=\rho_+(r_1)+r_2$, then for any Borel subset $ C\subseteq P_d(X) $ and $h\in C_b(V)_1$ with $ d(f(C),\supp(h))>r'_2 $, we have $ d(f(C),\supp(h))>r_2 $ and hence:
\[
\|hT \chi_C\| = \|T\chi_C h\| \leq \|\chi_{\Nd^c_{r_1}(C)}T \chi_C h\| + \|\chi_{\Nd_{r_1}(C)}Th \chi_C \| \leq \epsilon/2 + \epsilon/2 = \epsilon.
\]
In the sequel, we will use this observation without further explanation.
\end{rem}



The following result shows that twisted quasi-local algebras are indeed $C^*$-algebras:

\begin{lem}\label{lem:twisted strongly quasi-local algebra is well defined}
The set $C^*_q(P_d(X),\q)$ forms a $C^*$-subalgebra of $\B(E)$.
\end{lem}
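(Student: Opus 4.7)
The plan is to verify the four structural properties of a $C^*$-subalgebra: closure under linear combinations, involution, products, and norm limits. The linear closure is immediate since each of the four conditions in Definition~\ref{defn:twisted strongly quasi-local algebra} is stable under sums and scalar multiples.

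For closure under involution, I would check each condition for $T^*$. Condition (1) follows since $\q$ is a $\ast$-subalgebra and $(T^*)_{xy} = (T_{yx})^*$. Condition (2) holds because $\chi_K T^* = (T \chi_K)^*$ and $\K(E)$ is self-adjoint. Condition (3) is symmetric in $C, D$ via $\|\chi_C T^* \chi_D\| = \|\chi_D T \chi_C\|$. Condition (4) for $T^*$ reads $\|\chi_C T^* h\| = \|\bar{h} T \chi_C\|$, which is precisely what Remark~\ref{rem:twisted sq alg star condition} asserts for $T$ (noting $\supp(\bar{h}) = \supp(h)$). Closure under norm limits follows from standard $\varepsilon/2$-type approximations, using that $\q$ and $\K(E)$ are norm-closed and that the parameters $r_1, r_2$ may be chosen with respect to a sufficiently close approximant.

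The main work is closure under multiplication. Given $T, S \in C^*_q(P_d(X), \q)$, I verify conditions (1)--(4) for $TS$. Conditions (3) and (4) follow from standard neighborhood-splitting arguments: for (3), split $TS = T\chi_M S + T\chi_{M^c} S$ with $M = \Nd_{r/2}(C)$ and apply condition (3) of $T$ and of $S$ to the two pieces; for (4), split $\chi_C TS h = \chi_C T \chi_A Sh + \chi_C T \chi_{A^c} Sh$ with $A$ a suitable metric neighborhood of $C$, and use the uniform expansiveness of $f$ to translate a lower bound on $d(f(C), \supp(h))$ into a lower bound on $d(C, A^c)$, combining condition (4) of $S$ (applied to $\chi_A S h$) with condition (3) of $T$ (applied to $\chi_C T \chi_{A^c}$). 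Condition (2) for $TS$ is immediate since $\K(E)$ is a two-sided ideal in $\B(E)$.

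The expected obstacle is condition (1): showing $(TS)_{xy} \in \q$ rather than only in $\mathcal{M}(\q)$, since the naive formula $(TS)_{xy} = \sum_z T_{xz} S_{zy}$ need not converge in norm inside $\mathcal{M}(\q)$. The plan is to use compactness. Given $\varepsilon > 0$, condition (3) for $T$ yields $r > 0$ with $\|\chi_{\{x\}} T \chi_{\overline{B}(x,r)^c}\| < \varepsilon/\|S\|$. Since $X$ has bounded geometry, the closed ball $K := \overline{B}(x,r) \cap P_d(X)$ is contained in finitely many simplices of $P_d(X)$ and is therefore compact. By condition (2), $T\chi_K \in \K(E)$, hence $T \chi_K S \in \K(E)$ because $\K(E)$ is a two-sided ideal. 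Using the canonical identification $\K(E) \cong \K(\ell^2(Z_d)) \otimes \q$, every matrix entry of an element of $\K(E)$ lies in $\q$; in particular $(T\chi_K S)_{xy} \in \q$. The norm estimate $\|(TS)_{xy} - (T\chi_K S)_{xy}\| \leq \|\chi_{\{x\}} T \chi_{K^c}\| \cdot \|S\| < \varepsilon$ then shows that $(TS)_{xy}$ is approximated in $\mathcal{M}(\q)$ by elements of $\q$ to arbitrary precision, hence lies in the closed subalgebra $\q$, completing the proof.
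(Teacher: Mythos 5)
Your proposal is correct, and you correctly identify the crux: showing that the matrix entries of a product lie in $\q$ rather than merely in $\mathcal{M}(\q)$. The paper isolates exactly this step and calls conditions (2)--(4) for the product ``routine to check,'' which your neighbourhood-splitting arguments confirm. For condition~(1), however, your argument takes a genuinely different route from the paper's. The paper uses condition~(2) of \emph{both} factors symmetrically: since $\chi_{\{x\}}T_1$ and $T_2\chi_{\{y\}}$ are compact, one approximates each by a finite-rank operator with finitely many non-zero matrix entries; this produces a finite $F_0\subset Z_d$ outside of which $\|\chi_{\{x\}}T_1\chi_F\|$ and $\|\chi_FT_2\chi_{\{y\}}\|$ are small, yielding a Cauchy criterion for $\sum_{z}(T_1)_{xz}(T_2)_{zy}$ inside $\q$. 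Your version instead truncates the \emph{left} factor to $T\chi_K$ for a compact ball $K$ around $x$, bounds the tail $\|\chi_{\{x\}}T\chi_{K^c}\|\cdot\|S\|$ via condition~(3), and uses the identification $\K(E)\cong\K(\ell^2(Z_d))\otimes\q$ to see that $(T\chi_K S)_{xy}\in\q$. Both arguments are sound; the paper's is a bit leaner in that it needs only condition~(2) of the two factors, whereas yours needs condition~(2) and~(3) of $T$ plus boundedness of $S$. One small imprecision in your sketch of condition~(4): the lower bound on $d(C,A^c)$ comes for free from choosing $A=\Nd_s(C)$ and has nothing to do with $d(f(C),\supp(h))$; what uniform expansiveness buys you is $f(A)\subseteq\Nd_{\rho_+(s)}(f(C))$, which converts a lower bound on $d(f(C),\supp(h))$ into one on $d(f(A),\supp(h))$ so that condition~(4) of $S$ applies to $\chi_ASh$. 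The underlying idea is right; the description is just slightly garbled.
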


\begin{proof}
It follows from Remark \ref{rem:twisted sq alg star condition} that $C^*_q(P_d(X),\q)$ is a complete $\ast$-closed linear space in $\B(E)$. It remains to show that $C^*_q(P_d(X),\q)$ is closed under multiplication. Let $T_1, T_2 \in C^*_q(P_d(X),\q)$ and fix $x,y\in Z_d$. We claim that the series $\sum_{z\in Z_d} (T_1)_{xz}(T_2)_{zy}$ converges in norm $\|\cdot\|_{\q}$, which implies that condition (1) holds for $T_1T_2$. In fact due to condition (2), for any $\epsilon>0$ there exist finite rank operators $S_1,S_2 \in \K(E)$ with finitely many non-zero matrix entry such that $\|\chi_{\{x\}}T_1 - S_1\| < \epsilon$ and $\|T_2\chi_{\{y\}} - S_2\| < \epsilon$, which implies that $\|\chi_{\{x\}}T_1 - \chi_{\{x\}} S_1\| < \epsilon$ and $\|T_2\chi_{\{y\}} - S_2\chi_{\{y\}}\| < \epsilon$. Hence there exists a finite $F_0 \subset Z_d$ such that for any $F \subseteq Z_d \setminus F_0$, we have
\[
\epsilon > \|\chi_{\{x\}}T_1\chi_F - \chi_{\{x\}} S_1\chi_F\| = \|\chi_{\{x\}}T_1\chi_F\| \quad \mbox{and} \quad \epsilon > \|\chi_F T_2\chi_{\{y\}} - \chi_F S_2\chi_{\{y\}}\| = \|\chi_F T_2\chi_{\{y\}}\|.
\]
Therefore we obtain that 
\[
\big\| \sum_{z\in F} (T_1)_{xz}(T_2)_{zy} \big\|_{\q} = \big\|\chi_{\{x\}}T_1\chi_FT_2\chi_{\{y\}}\big\|_{\B(E)} < \epsilon^2,
\]
which concludes the claim. It is routine to check that condition (2)-(4) hold for $T_1T_2$, hence we finish the proof.
\end{proof}


It is straightforward to check that $C^*(P_d(X),\q)$ is a $C^*$-subalgebra of $C^*_q(P_d(X),\q)$. For later use, we denote the natural inclusion:
\begin{equation}\label{EQ: iA}
i^{\mathcal{A}}: C^*(P_d(X),\q) \hookrightarrow C^*_q(P_d(X),\q).
\end{equation}
In the sequel, we will show that $i^{\mathcal{A}}$ induces isomorphisms on $K$-theories. This is a crucial step to achieve the main result.

\section{Construction of the Bott maps}\label{sec:Bott}

This section is devoted to linking the $K$-theories of Roe and strongly quasi-local algebras with those of their twisted counterparts. To achieve, we construct the so-called Bott maps $\beta_r, \beta_{sq}$ following the idea from \cite{CWY15} and fill them into the following commutative diagram: 
\begin{equation}\label{EQ:Bott map diagram}
	\xymatrix{
		K_\ast(C^*(P_d(X)))  \ar[r]^-{\beta_r} \ar[d]_{i_\ast} & K_\ast(C^*(P_d(X),\q)) \ar[d]^{i^{\mathcal{A}}_\ast} \\
		K_\ast(C^*_{sq}(P_d(X)))  \ar[r]^-{\beta_{sq}} & K_\ast(C^*_{q}(P_d(X),\q)),}
\end{equation}
where the vertical lines are induced by inclusions. We follow the same notation introduced in Section \ref{sec:twisted alg} and fix $d \geq 0$.


\subsection{The Roe algebra case}\label{ssec:Roe Bott}

We start by recalling the Bott map constructed in \cite{CWY15} for Roe algebras. 
A key observation here is that the image of the Bott map  
\begin{center}
	$ \beta:K_\ast(P_d(X))\to K_\ast(\c(P_d(X),\mathcal{Q}((M_2(\mathcal{A}_n^+)\otimes\mathfrak{K})_{n}))) $
\end{center} 
can be made into the $K$-theory of the restricted twisted algebra introduced above, which makes it possible to relate to the strongly quasi-local case.

%
%

The crucial ingredient to construct the Bott map is the existence of the uniformly almost flat Bott generators. For each $n\in \N$, denote by $\A_n^+$ the $C^*$-algebra unitization of $\A_n=C_0(V_n,\Cliff(W_n))$ and by $M_2(\A_n^+)=\A_n^+ \otimes M_2(\C)$ the algebra of $2 \times 2$ matrices over $\A_n^+$. Set $b_0:=\begin{pmatrix}
		0 & 0 \\
		0 & 1
	\end{pmatrix} \in M_2(\A_n^+)$. For a matrix $a=\begin{pmatrix}
		a_{11} & a_{12} \\
		a_{21} & a_{22}
	\end{pmatrix}$
of functions on $V_n$, we define the support of $a$ to be
\[
\supp(a):= \bigcup_{i,j=1}^2 \supp(a_{ij}).
\]

\begin{lem}[{\cite[Lemma 3.6 and Definition 3.7]{CWY15}}]\label{lem:uniformly almost flatness of b}
For any $R>0$ and $\epsilon>0$, there exist $r>0$ and a family of idempotents $ \{b_{x,r}^{(n)}\}_{n\in \N,x\in V_n} $ in $ M_2(\mathcal{A}_n^+) $ which is \emph{$ (R,\epsilon;r) $-flat} in the following sense: $ \supp(b_{x,r}^{(n)}-b_0)\subseteq B_{V_n}(x,r) $ and for any $n\in \N$ and $x,y\in V_n$ with $\|x-y\|<R$, we have 
	\[
	\sup_{v\in V_n}\big\|b_{x,r}^{(n)} - b_{y,r}^{(n)}\big\|_{\Cliff(W_n)\otimes M_2(\C)}<\epsilon.
	\]
\end{lem}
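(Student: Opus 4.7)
The plan is to transport standard Bott idempotents from the Hilbert spaces $W_n$ back to the Banach spaces $V_n$ via the Property (H) map $\psi$, closely following the scheme of Kasparov--Yu and Chen--Wang--Yu. The first step is to extend $\psi|_{S(V_n)}$ to a positively homogeneous homeomorphism $\tilde\psi_n\colon V_n\to W_n$ by setting $\tilde\psi_n(v):=\|v\|\,\psi(v/\|v\|)$ for $v\neq 0$ and $\tilde\psi_n(0):=0$. Although $\tilde\psi_n$ is not uniformly continuous on all of $V_n$, the crucial quantitative fact is that the family $\{\tilde\psi_n|_{B_{V_n}(0,\rho)}\}_n$ is \emph{equi}-uniformly continuous, with a modulus $\omega_\psi(\cdot\,;\rho)$ depending only on $\rho$: splitting $B_{V_n}(0,\rho)$ into $\{\|v\|\geq c\}$ (where $v\mapsto v/\|v\|$ is $(1/c)$-Lipschitz and one invokes uniform continuity of $\psi$ on $S(\bigcup V_n)$) and $\{\|v\|<c\}$ (where $\|\tilde\psi_n(v)\|<c$ gives a trivial bound), one checks that for any $\alpha'>0$ one has $\omega_\psi(R;\rho)\leq 2\alpha'\rho+R$ once $\rho$ is large enough.

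Next, I would recall the classical Bott idempotent in $M_2(C_0(W_n,\Cliff(W_n))^+)$: the formula
\[
q_s^{(n)}(w):=\frac{1}{1+\|w/s\|^2}\begin{pmatrix}1 & -w/s\\ -w/s & \|w/s\|^2\end{pmatrix}
\]
is a genuine idempotent thanks to the Clifford relation $w^2=\|w\|^2$, is uniformly bounded, tends to $b_0$ as $\|w\|\to\infty$, and is $O(1/s)$-Lipschitz in $w$. A standard scalar cutoff combined with a functional-calculus correction (replacing an almost-idempotent by a nearby spectral projection) produces a modified family $\tilde q_s^{(n)}$ that agrees \emph{exactly} with $b_0$ outside $B_{W_n}(0,Ks)$ for some universal constant $K>0$, while preserving idempotency and a Lipschitz bound $C/s$ with $C$ universal. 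Then I would define
\[
b_{x,r}^{(n)}(v):=\tilde q_{r/K}^{(n)}\bigl(\tilde\psi_n(v-x)\bigr)\in M_2(\mathcal{A}_n^+).
\]
Since $\|\tilde\psi_n(v-x)\|=\|v-x\|$, for $v\notin B_{V_n}(x,r)$ the argument has norm $\geq r=K\cdot(r/K)$, whence $b_{x,r}^{(n)}(v)=b_0$ and the support condition $\supp(b_{x,r}^{(n)}-b_0)\subseteq B_{V_n}(x,r)$ holds.

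Finally, to verify $(R,\epsilon;r)$-flatness, fix $x,y\in V_n$ with $\|x-y\|<R$ and $v\in V_n$. If $\min(\|v-x\|,\|v-y\|)\geq r$, both values equal $b_0$; otherwise $v-x, v-y\in B_{V_n}(0,2r+R)$, so the equicontinuity of the first paragraph gives $\|\tilde\psi_n(v-x)-\tilde\psi_n(v-y)\|\leq\omega_\psi(R;2r+R)$, and hence
\[
\|b_{x,r}^{(n)}(v)-b_{y,r}^{(n)}(v)\|\;\leq\;\frac{CK\,\omega_\psi(R;2r+R)}{r}.
\]
The principal obstacle is a delicate balancing of parameters: the support condition ties the Bott scale $s=r/K$ to $r$, while the flatness bound requires $s$ to dominate the variation $\omega_\psi(R;2r+R)$, which itself may grow with $r$ since $\tilde\psi_n$ is not globally uniformly continuous. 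The right-hand side can nevertheless be made $<\epsilon$ by first choosing $\alpha'<\epsilon/(8CK)$ and then $r$ large enough that $\omega_\psi(R;2r+R)\leq 2\alpha'(2r+R)+R$; the essential uniformity in $n$ is inherited from the fact that Property (H) provides a \emph{single} uniformly continuous map $\psi$ on $S(\bigcup V_n)$ controlling all $\tilde\psi_n$ simultaneously.
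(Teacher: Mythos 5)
Your proposal is correct, but it follows a genuinely different route from the one the paper recalls from Chen--Wang--Yu. You build a scalar Bott projection
$q_s^{(n)}(w)=\tfrac{1}{1+\|w/s\|^2}\bigl(\begin{smallmatrix}1 & -w/s\\ -w/s & \|w/s\|^2\end{smallmatrix}\bigr)$
on $W_n$ (this is indeed idempotent by the Clifford relation, as one checks directly), pull it back through the full radial extension $\tilde\psi_n(v)=\|v\|\,\psi(v/\|v\|)$, and then force the support condition after the fact by a cutoff--plus--spectral--projection correction. The paper instead places the cutoff on the \emph{input}: it forms $f_{x,r}^{(n)}(v)=\varphi_r(\|v-x\|)\,\psi\bigl((v-x)/\|v-x\|\bigr)$, which is uniformly bounded by $1$, and then conjugates $e_{11}$ by a Whitehead-type product of elementary matrices built from $f_{x,r}^{(n)}$, getting exact idempotents with the right support in one step. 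This placement of the cutoff is where the two approaches really diverge, and it is what lets the paper sidestep the two delicate points you had to address. First, because $\|f_{x,r}^{(n)}\|_\infty\le 1$ always, the paper's flatness estimate only involves the angular variation $\omega_\psi^{\mathrm{sphere}}(O(R/r))$ and the Lipschitz bound $2R/r$ on $\varphi_r$, both of which go to zero as $r\to\infty$ with no balancing at all; your radial extension $\tilde\psi_n$ has a modulus that genuinely grows linearly in the radius $\rho$, which is why you need the $4CK\alpha'$ trade-off against the Bott scale $s=r/K$, and you are right that this works but it is an extra subtlety. Second, your cutoff-then-spectral-calculus step (producing $\tilde q_s^{(n)}$ agreeing with $b_0$ off a ball, remaining idempotent, and keeping the $C/s$ Lipschitz bound with a universal $C$) is genuinely a further lemma: one has to choose $K$ large enough that the convex interpolation between $q_s^{(n)}$ and $b_0$ has spectrum bounded away from $1/2$, and then argue that the holomorphic functional calculus preserves both the Lipschitz bound and the exact equality with $b_0$ off the ball (the latter holds because $b_0$ is a fixed point of the spectral projection, but you only sketch this). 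In the paper's Whitehead scheme no such correction is needed. Both constructions are standard --- yours is closer to Yu's original Hilbert-space argument, while the paper's is the CWY15 variant --- and both prove the lemma; yours just pays for the more transparent Bott formula with the two extra quantitative steps you already identified.
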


For convenience to readers, we briefly recall the construction here. Details can be found in \cite[Section 3.2]{CWY15}. For each $n\in \N$, $x\in V_n$ and $r>0$, we define a function $f_{x,r}^{(n)}: V_n \to W_n \subset \Cliff(W_n)$ by:
\[
f_{x,r}^{(n)}(v) = \varphi_r(\|v-x\|)\psi\big(\frac{v-x}{\|v-x\|}\big)
\]
where $ \psi:S(\bigcup_{n=1}^\infty V_n) \rightarrow S(\bigcup_{n=1}^\infty W_n) $ is the uniformly continuous function from the definition of Property (H) for $V$, and $ \varphi_r : [0,\infty) \to [0,\infty) $ is defined by 
\[ 
\varphi_r(t) =
	\begin{cases}
		0 & \text{if } 0\le t \le r/2,\\
		(2t/r)-1 & \text{if } r/2\le t\le r,\\
		1 & \text{if } t \geq r.
	\end{cases} 
\]
Let
\[ W_{x,r}= 
	\begin{pmatrix}
		1 & f_{x,r}^{(n)} \\
		0 & 1 
	\end{pmatrix}
    \begin{pmatrix}
    	1 & 0 \\
    	f_{x,r}^{(n)} & 1 
    \end{pmatrix}
    \begin{pmatrix}
	1 & f_{x,r}^{(n)} \\
	0 & 1 
    \end{pmatrix}
    \begin{pmatrix}
	0 & -1 \\
	1 & 0 
    \end{pmatrix}
\]
and we set:
\[
b_{x,r}^{(i)}=W_{x,r} 
	\begin{pmatrix}
		1 & 0 \\
		0 & 0 
	\end{pmatrix}
    W_{x,r}^{-1}.
\]
Then it was shown in \cite{CWY15} that $\{b_{x,r}^{(n)}\}$ satisfies the conditions in Lemma \ref{lem:uniformly almost flatness of b}. 

%
%
%

We now recall the construction of the Bott map $\beta_r$ for the Roe algebra $C^*(P_d(X))$ from \cite[Section 3.3]{CWY15}. As in \cite[Definition 2.4]{CWY15}, we choose $\H_{P_d(X)}:=\ell^2(Z_d; \HH)$ as the $P_d(X)$-module and define $\C_f[P_d(X)]$ to be the $\ast$-algebra of all bounded functions $T: Z_d \times Z_d \to \K$ such that
\begin{enumerate}
  \item for any bounded subset $B\subseteq X$, the set $\big \{(x,y)\in (B\times B) \cap (Z_d\times Z_d) : T(x,y) \neq 0 \big\}$ is finite;
	\item there exists an $ L>0 $ such that $\#\{x\in Z_d: T(x,y) \neq 0\} < L$ and $\# \{y\in Z_d : T(x,y)\neq 0\} < L $ for each $x,y\in Z_d$;
	\item there exists an $ R\ge 0 $ such that $ T(x,y)=0 $ whenever $ d(x,y)>R $ for $x,y\in Z_d$.
\end{enumerate}
Note that $\C_f[X]$ can be regarded as a subalgebra in $\B(\H_{P_d(X)})$, and its norm closure is $\ast$-isomorphic to $C^*(P_d(X))$.

Since $C^*(P_d(X))$ is stable, it suffices to define $\beta_r([P])$ for any idempotent $P \in C^*(P_d(X))$. For any $0 < \epsilon_1 < 1/100$, take $Q \in \C_f[P_d(X)]$ such that $\|P-Q\| < \epsilon_1/(4\|P\|)$. Then $\|Q-Q^2\|<\epsilon_1$ and there exists $R_{\epsilon_1}>0$ such that the propagation of $Q$ does not exceed $R_{\epsilon_1}$. 

For any $\epsilon_2>0$, it follows from Lemma \ref{lem:uniformly almost flatness of b} that there exists a family of $(R_{\epsilon_1},\epsilon_2;r)$-flat idempotents $\{b^{(n)}_v\}_{n\in \N,v\in V_n}$ in $M_2(\A_n^+)$. Denote:
\[
\mathcal{Q}((M_2(\A_n^+))_n) := \frac{\prod_{n=1}^\infty M_2(\A_n^+)}{\bigoplus_{n=1}^\infty M_2(\A_n^+)} \quad \mbox{and} \quad \mathcal{Q}((M_2(\A_n))_n) := \frac{\prod_{n=1}^\infty M_2(\A_n)}{\bigoplus_{n=1}^\infty M_2(\A_n)}.
\]
We define $\widetilde{Q}, \widetilde{Q}_0: Z_d \times Z_d \longrightarrow \mathcal{Q}((M_2(\A_n^+))_n) \otimes \K$ by the formula
\begin{equation}\label{EQ:defn for Q Roe1}
\widetilde{Q}(x,y):=\big[\big( b^{(1)}_{f(x)}, \cdots, b^{(n)}_{f(x)}, \cdots \big)\big] \otimes Q(x,y),
\end{equation}
\begin{equation}\label{EQ:defn for Q Roe2}
\widetilde{Q}_0(x,y):=\big[\big( b_0, \cdots, b_0, \cdots \big)\big] \otimes Q(x,y),
\end{equation}
respectively, for all $(x,y)\in Z_d \times Z_d$. Note that here $b^{(n)}_{f(x)}$ is well defined for sufficiently large $n$ such that $f(x) \in V_n$. It is clear that
\[
\widetilde{Q}, \widetilde{Q}_0 \in \C[P_d(X), \mathcal{Q}((M_2(\A_n^+))_n) \otimes \K]
\]
and
\[
\widetilde{Q} - \widetilde{Q}_0 \in \C[P_d(X), \mathcal{Q}((M_2(\A_n))_n) \otimes \K].
\]

Furthermore, since $X$ has bounded geometry and $Q$ has finite propagation, it follows from Lemma \ref{lem:uniformly almost flatness of b} that $\|{\widetilde{Q}}^2 - \widetilde{Q}\|<1/5$ and $\|\widetilde{Q}_0^2 - \widetilde{Q}_0\|<1/5$ if we choose $\epsilon_1$ and $\epsilon_2$ sufficiently small. Hence the spectrum of either $\widetilde{Q}$ or $\widetilde{Q}_0$ is contained in disjoint neighbourhoods $S_0$ of $0$ and $S_1$ of $1$. Let $\chi:S_0 \sqcup S_1 \to \C$ be a continuous function such that $\chi(S_0)=0$ and $\chi(S_1)=1$. Define $\Theta:=\chi(\widetilde{Q})$ and $\Theta_0:=\chi(\widetilde{Q}_0)$. Then $\Theta$ and $\Theta_0$ are idempotents in $C^*(P_d(X), \mathcal{Q}((M_2(\A_n^+))_n) \otimes \K)$, and $\Theta - \Theta_0$ belongs to the closed two-sided ideal $C^*(P_d(X), \mathcal{Q}((M_2(\A_n))_n) \otimes \K)$.

Finally we apply the \emph{difference construction} in $K$-theory of Banach algebras introduced by Kasparov-Yu \cite{KY06}. More precisely, for a closed two-sided ideal $J$ of a Banach algebra $B$ and two idempotents $p,q \in B$ with $p-q\in J$, we can define a difference element $D(p,q) \in K_0(J)$ associated to $p,q$. We omit the details here but guide the readers to their original paper. Now we define
\[
\beta_r([P]):=D(\Theta, \Theta_0) \in K_0\big( C^*(P_d(X), \q) \big).
\]
The correspondence $[P] \mapsto \beta_r([P])$ extends to a homomorphism 
\[
\beta_r:K_0\big( C^*(P_d(X)) \big) \longrightarrow K_0\big( C^*(P_d(X), \q) \big),
\]
which is called the \emph{Bott map}. By suspension, we define the Bott map for $K_1$-groups in a similar way:
\[
\beta_r:K_1\big( C^*(P_d(X)) \big) \longrightarrow K_1\big( C^*(P_d(X), \q) \big).
\]

\begin{rem}\label{rem:diff between twisted Roe}
Readers might already notice that the above Bott map $\beta_r$ is slightly different from the one constructed in \cite[Section 3.3]{CWY15}, which is defined to be 
\[
\beta=\iota_\ast \circ \beta_r: K_\ast\big( C^*(P_d(X)) \big) \longrightarrow K_\ast\big( C^*(P_d(X), \Q) \big)
\]
where $\iota$ is the monomorphism from Corollary \ref{cor:relation between twisted Roe}. This is due to the observation that the elements $\widetilde{Q}$ and $\widetilde{Q}_0$ defined in (\ref{EQ:defn for Q Roe1}) and (\ref{EQ:defn for Q Roe2}) indeed belong to the subalgebra $\mathcal{Q}((M_2(\A_n^+))_n) \otimes \K$ instead of merely the original one $\mathcal{Q}((M_2(\A_n^+))_n \otimes \K)$. This is crucial to construct Diagram (\ref{EQ:Bott map diagram}) as revealed in the next subsection.
\end{rem}


\subsection{The strongly quasi-local case}\label{ssec:sq Bott}

Now we move to the strongly quasi-local case and define the Bott map $ \beta_{sq} : K_*(C^*_{sq}(P_d(X)))\to K_\ast(C^*_{q}(P_d(X),\q))$. The construction is similar to the Bott map $\beta_r$ for Roe algebras in Section \ref{ssec:Roe Bott}, while the argument is more involved. Again we only consider the $K_0$-case, since the $K_1$-case can be done by suspension.

As in the Roe case, we choose $\H_X:=\ell^2(Z_d, \HH)$ as the fixed $P_d(X)$-module. Since $C^*_{sq}(\H_X)$ is stable, it suffices to define $ \beta_{sq}([P]) $ for any idempotent $ P\in\cs(\H_X) $. In the rest of this subsection, let us fix an idempotent $ P\in\cs(\H_X) $ and an $\epsilon \in (0,\frac{1}{4})$. It follows from Definition \ref{defn:strong quasi-locality} that there exist $\delta,R$ such that for any strongly measurable map $g:P_d(X) \to \K_1$ with $(\delta,R)$-variation, we have $ \|[P\otimes{\Id}_\HH,\Lambda(g)]\|<\epsilon $. We also fix such parameters $\delta$ and $R$. 

Recall that $f: X \to V$ is the coarse embedding and is extend to a continuous coarse embedding $f: P_d(X) \to V$ by affine extension (see the beginning of Section \ref{sec:twisted alg}). We fix two proper functions $\rho_\pm: [0,\infty) \to [0,\infty)$ such that for any $x,y \in P_d(X)$ we have:
\[
\rho_-(d(x,y)) \leq \|f(x) - f(y)\| \leq \rho_+(d(x,y)).
\]
Moreover, Lemma \ref{lem:uniformly almost flatness of b} provides an $r>0$ and a family of idempotents ${\{b_{x}^{(n)}\}}_{n\in\N,x\in V_n}$ which is $(\rho_+(R),\delta; r)$-flat. Again let us fix such $r$ and ${\{b_{x}^{(n)}\}}$.

Denote $E_2:=\ell^2(Z_d; \mathcal{Q}((M_2(\A_n))_n) \otimes \K)$ the Hilbert module over $\mathcal{Q}((M_2(\A_n))_n) \otimes \K$, and fix a faithful representation $\pi: \mathcal{Q}((M_2(\A_n))_n) \to \B(\H)$. Then $\pi \otimes i_{\K}: \mathcal{Q}((M_2(\A_n))_n) \otimes \K \to \B(\H \otimes \HH)$ is also a faithful representation, where $i_{\K}: \K \to \B(\HH)$ is the inclusion map. For any $ T\in\cs(\H_X) \subset \B(\ell^2(Z_d;\HH))$, we consider its \emph{amplification} $a(T) \in \B(\ell^2(Z_d;\H \otimes \HH))$ defined by $a(T)_{xy}=\Id_{\H} \otimes T_{xy}$ for any $x,y\in Z_d$. It follows from Lemma \ref{lem:the norm on Hilbert module} that there is an operator $\Psi(a(T)) \in \B(E_2)$ defined by (\ref{EQ:norm control}). Note that $\Psi(a(T))_{xy} = \Id_{\mathcal{Q}((M_2(\A_n))_n)} \otimes T_{xy}$ for any $x,y\in Z_d$.

On the other hand, we define a diagonal operator $B \in \B(E_2)$ by
\[
B_{xy}:=
\begin{cases}
~\big[\big( b^{(1)}_{f(x)}, \cdots, b^{(n)}_{f(x)}, \cdots \big)\big] \otimes \Id_{\HH}, & y=x; \\ 
~0, & \mbox{otherwise}
\end{cases}
\]
where $b^{(n)}_{f(x)}$ is well-defined for $n$ large enough such that $f(x) \in V_n$. Note that each $\big[\big( b^{(1)}_{f(x)}, \cdots, b^{(n)}_{f(x)}, \cdots \big)\big] \in \mathcal{M}\big(\mathcal{Q}((M_2(\A_n))_n)\big)$, hence $B$ is well-defined. Similarly, we define $B_0 \in \B(E_2)$ by
\[
(B_0)_{xy}:=
\begin{cases}
[(b_0, \cdots, b_0, \cdots )] \otimes \Id_{\HH}, & y=x; \\ 
~0, & \mbox{otherwise}.
\end{cases}
\]
It is clear that $B$ and $B_0$ are idempotents in $\B(E_2)$, and $\|B\|=\|B_0\|=1$.

Now for the given idempotent $P\in C^*_{sq}(\H_X)$, we define $\widetilde{P}:=B \cdot \Psi(a(P)) \in \B(E_2)$ and $\widetilde{P}_0:=B_0 \cdot \Psi(a(P)) \in \B(E_2)$. It is clear that for any $x,y\in Z_d$, we have:
\begin{equation}\label{EQ:defn for Q sq1}
\widetilde{P}(x,y)=\big[\big( b^{(1)}_{f(x)}, \cdots, b^{(n)}_{f(x)}, \cdots \big)\big] \otimes P(x,y),
\end{equation}
\begin{equation}\label{EQ:defn for Q sq2}
\widetilde{P}_0(x,y)=\big[\big( b_0, \cdots, b_0, \cdots \big)\big] \otimes P(x,y).
\end{equation}
Readers may compare (\ref{EQ:defn for Q sq1}), (\ref{EQ:defn for Q sq2}) with (\ref{EQ:defn for Q Roe1}), (\ref{EQ:defn for Q Roe2}).

We define the $C^*$-algebra $\cq(P_d(X),\2Q)$ similarly to Definition \ref{defn:twisted strongly quasi-local algebra} except that we use $\mathcal{Q}((M_2(\A_n))_n) \otimes \K$ to replace $\q$ therein. It is clear that $\cq(P_d(X),\2Q)$ is a subalgebra of $\B(E_2)$.  
We also define $\cq(P_d(X),\+Q)$ to be the $C^*$-subalgebra in $\B(E_2)$ generated by $\cq(P_d(X),\2Q)$ and $\big\{B_0 \cdot \Psi(a(Q)): Q\mbox{~is~an~idempotent~in~}C^*_{sq}(\H_X)\big\}$. 
It is easy to check that $\cq(P_d(X),\2Q)$ is a two-sided $\ast$-ideal in $\cq(P_d(X),\+Q)$.

The following lemma is a crucial step to define the Bott map:

\begin{lem}\label{lem:Bott sq P-element}
With the same notation as above, we have:
\[
\widetilde{P}, \widetilde{P}_0\in \cq(P_d(X),\+Q) \mbox{~and~} \widetilde{P} - \widetilde{P}_0 \in \cq(P_d(X),\2Q).
\]
\end{lem}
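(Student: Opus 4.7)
First observe that $\widetilde{P}_0 \in \cq(P_d(X), \+Q)$ is immediate from the construction of $\cq(P_d(X), \+Q)$, since $\widetilde{P}_0 = B_0\cdot\Psi(a(P))$ is one of the distinguished generators. Because $\cq(P_d(X), \2Q)$ is a two-sided $\ast$-ideal in $\cq(P_d(X), \+Q)$, it will suffice to prove the stronger statement
\[
T := \widetilde{P} - \widetilde{P}_0 = (B-B_0)\,\Psi(a(P)) \in \cq(P_d(X), \2Q);
\]
the remaining claim $\widetilde{P} = T + \widetilde{P}_0 \in \cq(P_d(X), \+Q)$ will then follow. So the plan is to verify the four defining conditions of Definition \ref{defn:twisted strongly quasi-local algebra}, with the coefficient algebra $\q$ replaced by $\mathcal{Q}((M_2(\mathcal{A}_n))_n)\otimes\K$, for the operator $T$.

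Conditions (1), (3), and (4) should be handled fairly directly. For (1), the support bound $\supp(b^{(n)}_{f(x)} - b_0) \subseteq B_{V_n}(f(x), r)$ from Lemma \ref{lem:uniformly almost flatness of b} places each $b^{(n)}_{f(x)} - b_0$ in $M_2(\mathcal{A}_n)$ itself (not merely its unitisation), while local compactness of $P$ forces each matrix entry $P_{xy}$ into $\K$; together these give $T(x,y) = [((b^{(n)}_{f(x)}-b_0))_n]\otimes P(x,y)\in\mathcal{Q}((M_2(\mathcal{A}_n))_n)\otimes\K$. For (4), I will take $r_2 := r$: if $d(f(C), \supp h) > r$ and $x \in C$, then for every sufficiently large $n$ with $f(x) \in V_n$ the supports of $b^{(n)}_{f(x)}-b_0$ and $h|_{V_n}$ are disjoint, whence $[((b^{(n)}_{f(x)}-b_0)\,h|_{V_n})_n] = 0$ and therefore $\chi_C T h = 0$. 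For (3), combining Lemma \ref{lem:the norm on Hilbert module} with quasi-locality of $P$ (which follows from strong quasi-locality via Proposition \ref{prop:relations between Roe, QL and SQL}(1)) gives
\[
\|\chi_C T\chi_D\|_{\B(E_2)} \;\le\; \|B-B_0\|\cdot\|\chi_C\Psi(a(P))\chi_D\|_{\B(E_2)} \;\le\; \|B-B_0\|\cdot\|\chi_C P\chi_D\|_{\B(\H_X)},
\]
whose right-hand side becomes arbitrarily small as $d(C,D)\to\infty$; the bound on $\|\chi_D T\chi_C\|$ is symmetric.

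The hard part will be condition (2), the local compactness of $T$. The obstruction is that $\Psi(a(\chi_K P))$ is not in $\K(E_2)$ even when $\chi_K P$ is compact on $\H_X$, because $\Psi\circ a$ tensors with the identity of $\mathcal{Q}((M_2(\mathcal{A}_n))_n)$, and that identity only lives in the multiplier algebra. To overcome this I will exploit the left factor $B-B_0$, whose diagonal entries $[((b^{(n)}_{f(x)}-b_0))_n]$ are honest elements of $\mathcal{Q}((M_2(\mathcal{A}_n))_n)$, to absorb the missing multiplier. Using that $B-B_0$ is diagonal, I rewrite $\chi_K T = (B-B_0)\,\Psi(a(\chi_K P))$; since $\chi_K P \in \K(\H_X)$, I then approximate it in operator norm by finite-rank operators $F = \sum_{i=1}^N |\xi_i\rangle\langle\eta_i|$ on $\H_X$ with each $\xi_i,\eta_i$ supported on finitely many points of $Z_d$ (arranged via a further density approximation in $\H_X$). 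For each such $F$, the operator $(B-B_0)\Psi(a(F))$ has only finitely many nonzero matrix entries, all of which lie in $A := \mathcal{Q}((M_2(\mathcal{A}_n))_n)\otimes\K$, and any finite-matrix operator on $\ell^2(Z_d; A)$ with entries in $A$ lies in $\K(E_2)$ by a standard approximate-unit argument: a single-entry operator with value $a\in A$ at position $(x_0, y_0)$ is realized as the norm limit $\lim_{\lambda}|e_{x_0}\otimes a\rangle\langle e_{y_0}\otimes e_\lambda|$ of rank-one Hilbert module operators, where $\{e_\lambda\}$ is an approximate unit of $A$. Combining these via the norm estimate $\|(B-B_0)\Psi(a(S))\|\le\|B-B_0\|\cdot\|S\|$ from Lemma \ref{lem:the norm on Hilbert module} will yield $\chi_K T \in \K(E_2)$; the claim $T\chi_K \in \K(E_2)$ follows symmetrically from $T\chi_K = (B-B_0)\Psi(a(P\chi_K))$ and compactness of $P\chi_K$.
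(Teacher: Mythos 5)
Your proof is correct and follows essentially the same route as the paper: the same rewriting $\chi_K T=(B-B_0)\Psi(a(\chi_K P))$, the same appeal to Lemma~\ref{lem:the norm on Hilbert module} for the norm control in condition (3), the same choice $r_2:=r$ together with the support bound from Lemma~\ref{lem:uniformly almost flatness of b} for condition (4), and the same use of an approximate unit of $\mathcal{Q}((M_2(\mathcal{A}_n))_n)$ to circumvent the obstruction that $\Psi\circ a$ only produces multiplier-valued diagonal entries. Your treatment of condition (2) is in fact slightly more explicit than the paper's: you first reduce to finite-rank operators with finitely many nonzero matrix entries and then invoke the approximate unit entrywise, whereas the paper works directly with a general rank-one $\chi_K P$, builds the vectors $\Xi_1$ and $\Xi_2^{(i)}$, and asserts norm convergence of the resulting rank-one module operators without spelling out the tail-plus-finite-part estimate that justifies it.
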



%
%
%
%

\begin{proof}
It suffices to prove that $\widetilde{P} - \widetilde{P}_0 \in \cq(P_d(X),\2Q)$. We need to check condition (1)-(4) in Definition \ref{defn:twisted strongly quasi-local algebra} for $\widetilde{P} - \widetilde{P}_0$. Note that (1) holds trivially.

For (3): Since $P$ is quasi-local, it follows that for any $\epsilon'>0$ there exists $r_1>0$ such that for any Borel subsets $C, D\subseteq P_d(X)$ with $d(C,D)>r_1$, then $\|\chi_C P \chi_D\| < \epsilon'/\|B-B_0\|$. Hence
\begin{align*}
& \|\chi_C (B-B_0) \Psi(a(P)) \chi_D\| = \|(B-B_0) \chi_C \Psi(a(P)) \chi_D\| \\
&\leq  \|B-B_0\|\cdot \|\Psi(a(\chi_C P \chi_D))\| 
 \leq  \|B-B_0\|\cdot \|\chi_C P \chi_D\| < \epsilon',
\end{align*}
where we use Lemma \ref{lem:the norm on Hilbert module} in the second last inequality.

For (4): Given $\epsilon'>0$, we take $r_2=r$.
For any Borel subset $ C\subseteq P_d(X) $ and $h\in C_b(V)_1$ with $ d(f(C),\supp(h))>r_2 $, we have:
\[
\chi_C (\widetilde{P} - \widetilde{P}_0) h = \chi_C (B - B_0)\Psi(a(P)) h = h (B - B_0) \chi_C \Psi(a(P)).
\]
Since $ \supp\big(b_{f(x)}^{(n)}-b_0\big)\subseteq B_{V_n}(f(x),r) $ for any $x \in P_d(X)$ and $n$ large enough such that $f(x) \in V_n$, we obtain that $h (B - B_0) \chi_C =0$ due to the choice of $r_2$. 

It remains to prove (2). Fixing a compact subset $K \subset P_d(X)$, we aim to show that $\chi_K(\widetilde{P} - \widetilde{P}_0)$ and $(\widetilde{P} - \widetilde{P}_0)\chi_K$ belong to $\K(E_2)$. Here we only consider $\chi_K(\widetilde{P} - \widetilde{P}_0)$ since the other is similar. Note that
\[
\chi_K(\widetilde{P} - \widetilde{P}_0) = (B-B_0)\chi_K\Psi(a(P)) = (B- B_0)\cdot \Psi(a(\chi_K P)).
\]
Since $P \in C^*_{sq}(P_d(X))$, then $\chi_K P$ is compact. Without loss of generality, we may assume that $\chi_K P$ is rank-one, i.e., $(\chi_K P)(\zeta)=\xi\langle\eta, \zeta \rangle$ where $\xi, \eta, \zeta \in \ell^2(Z_d; \HH)$. We fix a unit vector $v_0 \in \HH$. Denote the rank one operator $u \mapsto v\langle w, u \rangle$ by $\theta_{v,w}\in \B(\HH)$ where $u,v,w \in \HH$. Direct calculation shows that for any $x,y\in Z_d$, we have:
\[
\big((B-B_0) \cdot \Psi(a(\chi_K P))\big)_{xy} = \big[\big( b^{(1)}_{f(x)} - b_0, \cdots, b^{(n)}_{f(x)} - b_0, \cdots \big)\big]\otimes \theta_{\xi(x),v_0} \theta_{v_0,\eta(y)}.
\]
Let $\{q_i\}_{i\in I}$ be a net of approximating units for the $C^*$-algebra $\mathcal{Q}((M_2(\A_n))_n)$. We consider two vectors $\Xi_1, \Xi_2\in E_2$ defined by
\[
\Xi_1:=\sum_{x\in Z_d} \big(\big[\big( b^{(1)}_{f(x)} - b_0, \cdots, b^{(n)}_{f(x)} - b_0, \cdots \big)\big]\otimes \theta_{\xi(x),v_0}\big) [x],
\]
and 
\[
\Xi_2^{(i)}:=\sum_{x\in Z_d} \big(q_i \otimes \theta_{\eta(x),v_0} \big) [x].
\]
For each $i\in I$, denote the operator $\Theta_i: \Xi \mapsto \Xi_1\langle \Xi_2^{(i)}, \Xi \rangle$ where $\Xi\in E_2$. Then clearly $\Theta \in \K(E_2)$. Moreover, we obtain that the net $\{\Theta_i\}_{i\in I}$ converges to $\chi_K(\widetilde{P} - \widetilde{P}_0)$ in norm, which concludes the proof. 
\end{proof}

Our last ingredient to construct the Bott map is the following:

\begin{lem}\label{lem:approximating idempotent}
With the same notation as above, we have $\|\widetilde{P}^2 - \widetilde{P}\| \leq \epsilon$ and $\|\widetilde{P}_0^2 - \widetilde{P}_0\| \leq \epsilon$.
\end{lem}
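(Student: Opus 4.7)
The second inequality will be proved by observing that $\widetilde{P}_0$ is in fact an exact idempotent. Since $B_0$ is the constant block-diagonal operator with entries $[(b_0,b_0,\ldots)]\otimes\Id_{\HH}$, its matrix coefficients $(B_0)_{xx}$ do not depend on $x$ and sit in the first tensor factor, whereas the matrix entries $\Psi(a(P))_{xy}=\Id_{\mathcal{Q}((M_2(\mathcal{A}_n))_n)}\otimes P_{xy}$ carry the identity in that factor. Hence $B_0$ commutes with $\Psi(a(P))$. Combined with $b_0^{\,2}=b_0$ (so $B_0^{\,2}=B_0$) and $\Psi(a(P))^{2}=\Psi(a(P^{2}))=\Psi(a(P))$ (which follows because amplification is a $\ast$-homomorphism and $\Psi$ is multiplicative, since both sides have the same matrix entries), one obtains $\widetilde{P}_0^{\,2}=\widetilde{P}_0$ exactly, so $\|\widetilde{P}_0^{\,2}-\widetilde{P}_0\|=0\leq\epsilon$.

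For the first inequality, the plan is to reduce to bounding a single commutator using the strong quasi-locality of $P$ together with the flatness in Lemma \ref{lem:uniformly almost flatness of b}. Since each $b^{(n)}_{f(x)}$ is an idempotent we have $B^{2}=B$, and as above $\Psi(a(P))^{2}=\Psi(a(P))$. A short algebraic manipulation then yields
\[
\widetilde{P}^{\,2}-\widetilde{P}\;=\;B\,\bigl[\Psi(a(P)),\,B\bigr]\,\Psi(a(P)),
\]
so that $\|\widetilde{P}^{\,2}-\widetilde{P}\|\leq\|B\|\cdot\|\Psi(a(P))\|\cdot\|[\Psi(a(P)),B]\|$. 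The factor $\|B\|$ is uniformly bounded by a constant depending only on the explicit construction of the family $\{b^{(n)}_{x,r}\}$, and $\|\Psi(a(P))\|\leq\|P\|$ by Lemma \ref{lem:the norm on Hilbert module}. Therefore it is enough to bound $\|[\Psi(a(P)),B]\|$.

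To bound this commutator, I would first compute its matrix entries:
\[
[\Psi(a(P)),B]_{xy}\;=\;\bigl([(b^{(n)}_{f(y)})]-[(b^{(n)}_{f(x)})]\bigr)\otimes P_{xy},
\]
and then lift everything to $\ell^{2}(Z_d;\mathcal{H}\otimes\HH)$ via a faithful representation $\pi$ of $\mathcal{Q}((M_2(\mathcal{A}_n))_n)\otimes\K$, using Lemma \ref{lem:the norm on Hilbert module}: the lifted commutator takes the form $[a(P),\tilde{B}]$ with diagonal entries of $\tilde{B}$ given by $\pi([(b^{(n)}_{f(x)})])\otimes\Id_{\HH}$. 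After interchanging the two tensor factors, this commutator is structurally identical to $[P\otimes\Id_{\mathcal{H}},\Lambda'(\beta)]$, where $\beta(x):=\pi([(b^{(n)}_{f(x)})])$ and $\Lambda'(\beta)$ is the block-diagonal operator with entries $\Id_{\HH}\otimes\beta(x)$. Because $\pi$ is isometric on the multiplier algebra and $\{b^{(n)}_{x,r}\}$ is $(\rho_+(R),\delta;r)$-flat, $\beta$ is strongly measurable, uniformly bounded, and has $(\delta,R)$-variation.

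The main obstacle is that $\beta$ takes values in $\B(\mathcal{H})$ rather than $\K(\HH)_1$, so Definition \ref{defn:strong quasi-locality} does not apply verbatim. The plan to get around this is a separability reduction: since $Z_d$ is countable, $\{\beta(x):x\in Z_d\}$ generates a separable $C^{*}$-subalgebra of $\B(\mathcal{H})$, which we may faithfully represent on a separable Hilbert space and then approximate the image in norm by $\K(\HH)$-valued maps while controlling variation (through a finite-rank-truncation together with a Hilbert-module tensor argument analogous to the one used in the proof of Proposition \ref{prop:strong coarse invariance}). Applying strong quasi-locality of $P$ to such approximations and passing to the limit gives the desired bound on $\|[P\otimes\Id_{\mathcal{H}},\Lambda'(\beta)]\|$, hence on $\|[\Psi(a(P)),B]\|$, up to a constant depending only on $\|P\|$ and $\|B\|$. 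Absorbing this constant into the initial choice of $\delta$ and $R$ (applied to a suitably rescaled $\epsilon$) yields $\|\widetilde{P}^{\,2}-\widetilde{P}\|\leq\epsilon$, completing the proof.
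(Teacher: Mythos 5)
Your treatment of $\widetilde{P}_0$ is correct and simply spells out what the paper calls ``trivial''. The algebraic reduction $\widetilde{P}^{\,2}-\widetilde{P}=B\,[\Psi(a(P)),B]\,\Psi(a(P))$, the lift to $\ell^2(Z_d;\H\otimes\HH)$ via $\Psi$ and Lemma~\ref{lem:the norm on Hilbert module}, and the identification of $[\Psi(a(P)),B]$ with a commutator $[a(P),G]$ built from the map $\beta(x)=\pi\big([(b^{(n)}_{f(x)})_n]\big)$ all match the paper's argument. You also correctly note that $\beta$ has $(\delta,R)$-variation by the $(\rho_+(R),\delta;r)$-flatness.

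The gap is in how you resolve the obstacle that $\beta$ takes values in $\B(\H)_1$ rather than $\K(\HH)_1$. You propose to ``approximate the image in norm by $\K(\HH)$-valued maps\dots and pass to the limit''. This does not work: the operators $\beta(x)\in\B(\H)_1$ are in general not compact, and a non-compact operator cannot be norm-approximated by compact ones at all. So there is no sequence of $\K_1$-valued maps $g_m$ with $\|g_m-\beta\|_\infty\to 0$ to which one could apply strong quasi-locality and then take a norm limit of $\Lambda(g_m)$. The separability reduction you invoke is beside the point --- the issue is non-compactness, not non-separability. What the paper does instead is genuinely different: for each finite-rank projection $q\in\B(\H)$ it sets $\hat q:=\Id_{\ell^2(Z_d)}\otimes q\otimes\Id_{\HH}$ and considers the compression $g_q(x):=q\,\beta(x)\,q$, which, after identifying $q\H$ with a finite-dimensional subspace of $\HH$, really does land in $\K(\HH)_1$ and still has $(\delta,R)$-variation. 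Strong quasi-locality of $P$ then gives $\|\hat q\,[G,a(P)]\,\hat q\|\le\|[P\otimes\Id_{\HH},\Lambda(g_q)]\|<\epsilon$ for every such $q$, and the proof closes via the \emph{supremum} identity $\|[G,a(P)]\|=\sup_q\|\hat q\,[G,a(P)]\,\hat q\|$ over all finite-rank projections $q$ in $\B(\H)$. This is a supremum over compressions, not a norm limit of approximating diagonal operators; the two are not interchangeable, because the compressions $\Lambda(g_q)$ do not converge in norm to anything. Your phrase ``finite-rank-truncation'' gestures at the right object, but as written the argument (norm approximation followed by passage to the limit) is unsound and must be replaced by the compression-and-supremum device.
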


\begin{proof}
We only deal with the case of $\widetilde{P}$ while the other is trivial. Recall that $\widetilde{P} = B \cdot \Psi(a(P))$ and $B, P$ are idempotents, hence it suffices to control $\|[B, \Psi(a(P))]\|$.

Recall that $\pi: \mathcal{Q}((M_2(\A_n))_n) \to \B(\H)$ is the faithful representation fixed above. We consider the following map $g: P_d(X) \to \B(\H)_1$ defined by
\[
g(x)= \pi\big(\big[\big( b^{(1)}_{f(x)}, \cdots, b^{(n)}_{f(x)}, \cdots \big)\big] \big), \quad x\in P_d(X).
\]
Since ${\{b_{z}^{(n)}\}}_{n\in\N,z\in V_n}$ is $(\rho_+(R),\delta; r)$-flat for the $R, \delta$ chosen above, we obtain that the map $g$ has $(\delta, R)$-variation on $P_d(X)$. Moreover, it is easy to check that $g$ is continuous. Define a diagonal operator $G\in \B(\ell^2(Z_d;\H \otimes \HH))$ by
\[
G_{xy}:=\begin{cases}
		g(x)\otimes\Id_{\HH}, & \text{if } x=y;\\
		0, & \text{otherwise}.
	\end{cases} 
\]
Then it is clear that $\Psi([G,a(P)]) = [B, \Psi(a(P))]$. Hence from Lemma \ref{lem:the norm on Hilbert module}, it suffices to control $\|[G,a(P)]\|_{\B(\ell^2(Z_d;\H \otimes \HH))}$.

For any finite rank projection $q \in \B(\H)$, denote $\hat{q}:=\Id_{\ell^2(Z_d)} \otimes q \otimes \Id_{\HH} \in \B(\ell^2(Z_d;\H \otimes \HH))$. We regard $q(\H)$ as a subspace in $\HH$. Consider the continuous map $g_q: P_d(X) \to \B(q\H)_1 \subseteq \K(\HH)_1$ defined by $g_q(x):= q \cdot g(x) \cdot q$, which also has $(\delta, R)$-variation. It is straightforward to check that for any $x,y\in Z_d$, we have:
\[
\big(\Lambda(g_q)\big)_{xy} =
\begin{cases}
		\Id_{\HH} \otimes g_q(x), & \text{if } x=y;\\
		0, & \text{otherwise}
	\end{cases} 
\]
where $\Lambda(g_q)$ is defined as in Definition \ref{defn:definition of Lambda general case}. Hence we obtain:
\[
\big\|\hat{q}[G,a(P)] \hat{q}\big\| \leq \big\|[P \otimes \Id_{\HH}, \Lambda(g_q)]\big\| < \epsilon.
\]
Finally note that
\[
\big\|[G,a(P)] \big\| = \sup\big\{\big\|\hat{q}[G,a(P)] \hat{q}\big\|: q \mbox{~is~a~finite~rank~projection~in~}\B(\H)\big\},
\]
hence we conclude the proof.
\end{proof}

Consequently, we obtain from Lemma \ref{lem:approximating idempotent} that the spectrum of either $\widetilde{P}$ or $\widetilde{P}_0$ is contained in disjoint neighbourhoods $S_0$ of $0$ and $S_1$ of $1$. Let $\chi:S_0 \sqcup S_1 \to \C$ be a continuous function such that $\chi(S_0)=0$ and $\chi(S_1)=1$. Define $\Theta:=\chi(\widetilde{P})$ and $\Theta_0:=\chi(\widetilde{P}_0)$. Then it follows from Lemma \ref{lem:Bott sq P-element} that $\Theta$ and $\Theta_0$ are idempotents in $C_q^*(P_d(X), \mathcal{Q}((M_2(\A_n^+))_n) \otimes \K)$, and $\Theta - \Theta_0$ belongs to the closed two-sided ideal $C_q^*(P_d(X), \mathcal{Q}((M_2(\A_n))_n) \otimes \K)$.

Finally we apply the difference construction from \cite{KY06} as in the Roe case, and define 
\[
\beta_{sq}([P]):=D(\Theta, \Theta_0) \in K_0\big( C_q^*(P_d(X), \q) \big).
\]
The correspondence $[P] \mapsto \beta_{sq}([P])$ extends to a homomorphism 
\[
\beta_{sq}: K_0\big( C_{sq}^*(P_d(X)) \big) \longrightarrow K_0\big( C_q^*(P_d(X), \q) \big),
\]
which is also called the \emph{Bott map}. From the constructions above, it is easy to see that Diagram (\ref{EQ:Bott map diagram}) commutes.

\section{local isomorphism}\label{sec:loc isom}

In this section, we relate the $K$-theories of restricted twisted Roe algebras and their quasi-local counterparts. The main result is the following:

\begin{thm}\label{thm:isomorphism between twisted Roe algebra and twisted quasi-local algebra in K-theory} 
Let $X$ be a discrete metric space with bounded geometry which admits a coarse embedding into a Banach space with Property (H). Then for each $d \geq 0$, the natural inclusion $i^{\A}$ from (\ref{EQ: iA}) induces isomorphisms on $K$-theories, \emph{i.e.}, the following
\[
i_*^{\A}:K_*\big(\c(P_d(X),\q)\big) \longrightarrow K_*\big(\cq(P_d(X),\q)\big)
\]
is an isomorphism for $\ast=0,1$.
\end{thm}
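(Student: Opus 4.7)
The plan is to filter both twisted algebras by the ``effective $V$-support radius'' of their elements and reduce the comparison to a local statement where quasi-locality automatically collapses to finite propagation via the coarse embedding. For each $r > 0$, I would introduce a $C^*$-subalgebra $I_r \subseteq C^*(P_d(X),\q)$ generated by those elements of $\C[P_d(X),\q]$ satisfying condition (4) of Definition \ref{defn:original algebraic twisted Roe} with this specific $r$, and an analogous $J_r \subseteq C^*_q(P_d(X),\q)$ consisting of those $T$ for which condition (4) of Definition \ref{defn:twisted strongly quasi-local algebra} (together with its adjoint variant from Remark \ref{rem:twisted sq alg star condition}) holds with $r_2 = r$ for every $\epsilon > 0$. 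By construction $I_r \subseteq J_r$, and both families are directed in $r$.

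The first step is an exhaustion statement: $C^*(P_d(X),\q) = \overline{\bigcup_r I_r}$ is immediate from the definition, while $C^*_q(P_d(X),\q) = \overline{\bigcup_r J_r}$ should follow from condition (4) of Definition \ref{defn:twisted strongly quasi-local algebra} by cutting off via a bump function on $V$ supported in an $r$-neighbourhood of $f(Z_d)$. The crucial step is then to show that $I_r \hookrightarrow J_r$ is a $K$-theory isomorphism---ideally even an equality. Given $T \in J_r$, applying the hard $V$-support estimate simultaneously to $T$ and $T^*$ forces any effective matrix entry $T_{xy}$ to have $V$-support inside both the $r$-ball around $f(x)$ and the $r$-ball around $f(y)$; non-empty intersection requires $\|f(x)-f(y)\| \le 2r$, and effective properness of the coarse embedding $f$ then gives $d(x,y) \le R(r)$ for some finite $R(r)$ depending only on $r$ and $\rho_-$. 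Hence the effective $X$-propagation of $T$ is automatically bounded, quasi-locality (condition (3) of Definition \ref{defn:twisted strongly quasi-local algebra}) is trivially satisfied, and a joint cutoff argument in both $X$ and $V$ should place $T$ inside $I_{r'}$ for some $r' = r + O(1)$. Passing to the inductive limit and using continuity of $K$-theory then yields
\[
K_*\bigl(C^*(P_d(X),\q)\bigr) \;=\; \varinjlim_r K_*(I_r) \;=\; \varinjlim_r K_*(J_r) \;=\; K_*\bigl(C^*_q(P_d(X),\q)\bigr),
\]
which is the desired isomorphism.

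The main obstacle will be to promote the ``effective $V$-support forces bounded $X$-propagation'' reasoning from a matrix-entry heuristic into a genuine norm approximation. Operators in $C^*_q(P_d(X),\q)$ carry no \emph{a priori} finite-row/column condition, so beyond the $V$-side bump function cutoff one will also need an $X$-side cutoff, for which the bounded geometry of $X$ together with the local compactness condition (2) of Definition \ref{defn:twisted strongly quasi-local algebra} must produce a norm approximation by a genuine $\C[P_d(X),\q]$-type element (a secondary technical nuisance is checking that each $J_r$ is itself a $C^*$-subalgebra; propagation parameters add under products, but the slack is washed out by the direct limit in $r$). If the local identification $I_r = J_r$ proves too delicate to achieve as an equality, I would fall back to proving it at the $K$-theory level by a Mayer--Vietoris argument over a bounded-diameter cover of an $r$-neighbourhood of $f(Z_d)$ in $V$, reducing to a single ball in a finite-dimensional $V_n$, where Property A makes the Roe and quasi-local algebras coincide and the five lemma closes the argument.
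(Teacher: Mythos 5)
Your filtration by $V$-support is the right starting point and is a close analogue of the paper's Lemma~\ref{lem:the direct limit of local twisted quasi-local algebra and local twisted Roe algebra} (exhaustion by $C^*(P_d(X),\q)_{O_r}$ and $C^*_q(P_d(X),\q)_{O_r}$ with $O_r = \Nd_r(f(X))$), and your central observation is correct: if $T$ satisfies condition~(4) of Definition~\ref{defn:twisted strongly quasi-local algebra} and its adjoint variant \emph{exactly} with radius $r$, then a nonzero matrix entry $T_{xy}$ forces the two balls $B_V(f(x),r)$ and $B_V(f(y),r)$ to meet, hence $\|f(x)-f(y)\|\le 2r$ and so $d(x,y)\le R(r)$ by effective properness of $f$. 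The same phenomenon is exploited locally in the proof of Lemma~\ref{lem:local isomorphism for local twisted quasi-local algebra 1}.

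However, the leap from ``bounded $X$-propagation and local compactness'' to $T\in I_{r'}$ is the genuine gap, and it is not merely a technical nuisance that the direct limit washes out. The restricted twisted Roe algebra is the norm closure of $\C[P_d(X),\q]$, whose elements carry the uniform row/column bound and local finiteness of Definition~\ref{defn:original algebraic twisted Roe}(1),(2). A quasi-local $T\in J_r$ with propagation $\le R(r)$ decomposes, using a uniformly bounded Borel partition $\{B_i\}$ of $P_d(X)$, into finitely many ``colours'' $T_k = \sum_i \chi_{B_i} T\chi_{B_{j_k(i)}}$ of disjointly supported compact pieces, and $\|T_k - S\| = \sup_i \|\chi_{B_i} T\chi_{B_{j_k(i)}} - S_i\|$ for any disjointly supported approximant $S$. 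To produce an approximant in $\C[P_d(X),\q]$ you therefore need the finite-matrix approximation rate of each compact block to be \emph{uniform over} $i$, and there is no reason for this to hold without further structure. The paper sidesteps exactly this by the $(\Gamma,r)$-separated decomposition: Lemmas~\ref{lem:local isomorphism for local twisted Roe algebra}--\ref{lem:local isomorphism for local twisted quasi-local algebra 2} exhibit both local algebras as $\lim_S\prod_\gamma C^*(Y_\gamma(S))\otimes(\q)_{O_\gamma}$, a $C^*$-direct-product in which each $\gamma$-block is independently a copy of compacts-tensor-coefficient, so no uniformity is required across $\gamma$; and the non-separated case is then recovered by a finite Mayer--Vietoris induction over the $N$-colouring of $X$ provided by bounded geometry (Lemma~\ref{lem:a direc limit about local twisted algebra}). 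Your fallback gestures at a Mayer--Vietoris, but over a cover of a neighbourhood of $f(Z_d)$ in $V$ reducing to a ball in $V_n$, which is the wrong target: the reduction is to bounded balls $Y_\gamma(S)$ in $P_d(X)$, and the collapse of Roe to quasi-local there is not a Property~A statement but the elementary fact that both coincide with $\K$ on the separable module $\ell^2(Y_\gamma(S)\cap Z_d;\q)$. In short, the step $J_r\subseteq I_{r'}$ is exactly where the content of the theorem lives; supplying a proof of it will, in effect, reproduce the product decomposition and Mayer--Vietoris of Section~\ref{sec:loc isom}.
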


The idea of the proof is to decompose the twisted algebras into smaller subalgebras which are easier to handle and apply the Mayer-Vietoris sequence to paste them together. This follows the outline of \cite[Section 4]{CWY15}, which originates from \cite{Yu00}. We use the same notation introduced in Section \ref{sec:twisted alg} and \ref{sec:Bott}, and fix a $d \geq 0$ throughout the section.



The following notion is inspired by \cite[Definition 4.2]{CWY15}:

\begin{defn}\label{defn:the generating set of local twisted Roe algebra}
Let $ O\subseteq V $ be an open subset of $V$. Define $ \C{[P_d(X),\q]}_O $ to be the $ \ast $-subalgebra of $ \C[P_d(X),\q] $ generated by elements $T \in \C[P_d(X),\q]$ such that for any $ x,y\in Z_d $, we have:
\[
\supp(h_n)\subseteq O\cap V_n
\]
where $ T_{xy}=[(h_1,\cdots,h_n,\cdots)]\in \q $. We define $ \c{(P_d(X),\q)}_O $ to be the norm closure of $ \C{[P_d(X),\q]}_O $ in $ \B(E) $.
\end{defn}


Similarly, we introduce the following for quasi-local algebras:

\begin{defn}\label{defn:local twisted strongly quasi-local algebra}
Let $ O\subseteq V $ be an open subset of $V$. Define $ {\cq(P_d(X),\q)}_O $ to be the $ \ast $-subalgebra of $ \cq(P_d(X),\q) $ generated by elements $T \in \cq(P_d(X),\q)$ such that for any $ x,y\in Z_d $, we have:
\[
\supp(h_n)\subseteq O\cap V_n
\]
where $ T_{xy}=[(h_1,\cdots,h_n,\cdots)]\in \q $. For a Borel subset $Y \subseteq P_d(X)$, we also denote by $ {\cq(Y,\q)}_O $ the $C^*$-algebras consisting of elements in $\cq(P_d(X),\q)$ with support in $Y \times Y$.
\end{defn}

Note that $ \c{(P_d(X),\q)}_O $ is a two-sided $\ast$-ideal of $ \c(P_d(X),\q) $. It is also easy to see that $ {\cq{(P_d(X),\q)}}_O $ is norm closed, and is a two-sided $\ast$-ideal of $ \cq(P_d(X),\q) $. Moreover, it is clear that we have the inclusion
\[
i^{\mathcal{A}}: C^*(P_d(X),\q)_O \hookrightarrow C^*_q(P_d(X),\q)_O.
\]

%
%
%
%
%
%
%
%
%
%
%

We need to consider some special open subsets $O$ as follows:

\begin{defn}\label{defn:(gamma,r)-separate}
Let $ \Gamma\subseteq X$ and $r>0 $. An open subset $ O\subseteq V $ is said to be \emph{$(\Gamma,r) $-separate} if the following holds:
\begin{enumerate}
		\item $ O=\bigsqcup_{\gamma\in\Gamma}O_{\gamma} $, where $O_\gamma \subset V$ such that $ O_{\gamma}\cap O_{\gamma'}=\emptyset$ whenever $ \gamma \neq \gamma' $;
		\item for any $\gamma \in \Gamma$, we have $O_{\gamma} \subseteq B_{V}(f(\gamma),r) $.
\end{enumerate}
\end{defn}

The following is the key building block to achieve Theorem \ref{thm:isomorphism between twisted Roe algebra and twisted quasi-local algebra in K-theory}:

\begin{prop}\label{prop:isomorphism between local twisted Roe algebra and local twisted quasi-local algebra in K-theory}
Suppose $\Gamma \subset X$ and $r>0$. Then for any $(\Gamma, r)$-separate open subset $O \subset V$, we have that
\[
i^\A_\ast: K_*({\cq(P_d(X),\q)}_O)\cong K_*(\c{(P_d(X),\q)}_O)
\]
is an isomorphism for $\ast=0,1$.
\end{prop}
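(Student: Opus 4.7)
The plan is to show that both $C^*(P_d(X), \q)_O$ and $C^*_q(P_d(X), \q)_O$ admit the \emph{same} block decomposition indexed by $\gamma \in \Gamma$, so that the inclusion $i^\A$ corresponds to the identity map under these identifications and hence induces an isomorphism on $K$-theory. For the building blocks, for each $\gamma \in \Gamma$ I would set $X_\gamma := \{x \in Z_d : f(x) \in \Nd_r(O_\gamma)\}$. Since $O_\gamma \subseteq B_{V}(f(\gamma), r)$ and $f$ is a coarse embedding with $\|f(x)-f(y)\| \geq \rho_-(d(x,y))$, the diameter of $X_\gamma$ in $P_d(X)$ is uniformly bounded by some constant $D = D(r, \rho_-)$. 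Define $A_\gamma \subseteq \B(E)$ to be the norm closure of the $\ast$-algebra of operators $T$ whose matrix entries $T_{xy} = [(h_n)]$ satisfy $T_{xy} = 0$ unless $x, y \in X_\gamma$, and $\supp(h_n) \subseteq O_\gamma \cap V_n$ for large $n$. Crucially, any such $T$ has spatial propagation at most $D$, so condition (3) of Definition \ref{defn:twisted strongly quasi-local algebra} (quasi-locality) is vacuous on $A_\gamma$, and the Roe and quasi-local versions of the local algebra coincide.

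Next I would construct $\ast$-isomorphisms of both algebras with $\prod_{\gamma \in \Gamma} A_\gamma$ (the $\ell^\infty$-product of $C^*$-algebras). For the Roe version, condition (4) of Definition \ref{defn:original algebraic twisted Roe} combined with disjointness of the $\{O_\gamma\}$ implies that every $T \in \C[P_d(X), \q]_O$ decomposes as a strongly convergent sum $T = \sum_\gamma T^{(\gamma)}$ with $T^{(\gamma)} \in A_\gamma$, supported on disjoint parts of $Z_d \times Z_d$. Hence $\|T\|_{\B(E)} = \sup_\gamma \|T^{(\gamma)}\|$, and taking norm closures gives $C^*(P_d(X), \q)_O \cong \prod_\gamma A_\gamma$. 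For the quasi-local version, the same assignment $T \mapsto (T^{(\gamma)})_\gamma$ should make sense because conditions (3) and (4) of Definition \ref{defn:twisted strongly quasi-local algebra} (together with Remark \ref{rem:twisted sq alg star condition}) force the ``non-block-diagonal'' contributions of $T$ to be arbitrarily small in norm as the separation scale grows. Under both identifications the inclusion $i^\A$ corresponds to the identity $\prod_\gamma A_\gamma \to \prod_\gamma A_\gamma$, so it is a $\ast$-isomorphism and therefore $i^\A_\ast$ is an isomorphism on $K$-theory.

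The main obstacle is the quasi-local block decomposition. In the Roe setting the finite propagation property of elements of $\C[P_d(X), \q]_O$ makes the decomposition essentially algebraic. In the quasi-local setting one only has quantitative decay of cross-terms, so one must manufacture well-defined blocks $T^{(\gamma)}$ by approximation: given $\varepsilon > 0$, invoke condition (4) to choose $r_2$ so that matrix entries of $T$ with function support in $O_\gamma$ contribute negligibly whenever $f(x)$ or $f(y)$ is far from $O_\gamma$, and then use condition (3) to show that different $\gamma$-pieces are mutually almost orthogonal (their spatial supports being pairwise far apart in $P_d(X)$ by the coarse embedding). Passing to a limit, one must verify that the resulting assignment $T \mapsto (T^{(\gamma)})_\gamma$ is well-defined, uniformly bounded, isometric onto $\prod_\gamma A_\gamma$, and compatible with the $\ast$-algebra structure; this verification will be the heart of the argument.
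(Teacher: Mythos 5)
Your overall plan is the right one and matches the paper's in spirit: decompose both local twisted algebras into per-$\gamma$ building blocks whose spatial support is controlled, observe that the Roe and quasi-local versions of each building block coincide because quasi-locality is vacuous on a set of bounded diameter, and conclude that $i^\A$ is in fact a $\ast$-isomorphism. But there is a genuine gap in how you set up the building blocks. You fix $X_\gamma = \{x \in Z_d : f(x) \in \Nd_r(O_\gamma)\}$ with the radius $r$ coming only from $(\Gamma,r)$-separateness, and assert that every block $T^{(\gamma)} = T\chi_{O_\gamma}$ has spatial support inside $X_\gamma \times X_\gamma$. That fails: Definition \ref{defn:original algebraic twisted Roe}(4) only provides, for each individual $T$ in the generating set, \emph{some} $r' > 0$ with $\supp(T_{xy}) \subseteq B_{V_n}(f(x),r')$, and $r'$ depends on $T$. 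If $r' > r$, the block $T^{(\gamma)}$ can have nonzero entries $T_{xy}$ with $f(x)$ at distance up to $r+r'$ from $O_\gamma$, so $x \notin X_\gamma$. Since $r'$ is unbounded as $T$ ranges over $\C[P_d(X),\q]_O$, no single spatial cutoff works, and the claimed isomorphism $\c(P_d(X),\q)_O \cong \prod_\gamma A_\gamma$ is false as stated.

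The paper remedies precisely this by taking a direct limit over the spatial scale: Lemma \ref{lem:local isomorphism for local twisted Roe algebra} identifies $\c(P_d(X),\q)_O \cong \lim_S \prod_\gamma \c(Y_\gamma(S))\otimes (\q)_{O_\gamma}$ with $Y_\gamma(S) = \{x\in P_d(X): d(x,\gamma)\le S\}$, and Lemmas \ref{lem:local isomorphism for local twisted quasi-local algebra 1}--\ref{lem:local isomorphism for local twisted quasi-local algebra 2} give the analogous decomposition of $\cq(P_d(X),\q)_O$ using exactly the approximation argument you sketch for the quasi-local side (conditions (3), (4) of Definition \ref{defn:twisted strongly quasi-local algebra} and Remark \ref{rem:twisted sq alg star condition}). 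Then $\cq(Y_\gamma(S)) \cong \K(\ell^2(Y_\gamma(S)\cap Z_d)) \cong C^*(Y_\gamma(S))$ is your ``quasi-locality is vacuous on bounded sets'' point, and the two direct limits match term by term, so $i^\A$ is a $\ast$-isomorphism. If you promote your fixed $X_\gamma$ to the direct system $Y_\gamma(S)$ and run the argument at each level $S$, you essentially recover the paper's proof. One smaller inaccuracy: the blocks $T^{(\gamma)}$ need not be supported on disjoint parts of $Z_d \times Z_d$ (the regions $Y_\gamma(S)$ can overlap for nearby $\gamma$); the $\prod_\gamma$ structure and the isometry $\|T\| = \sup_\gamma\|T^{(\gamma)}\|$ come from disjointness of the \emph{function} supports $O_\gamma$ in the coefficient algebra, not from disjointness of spatial supports.
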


The proof is divided into several lemmas. We start with some extra notation. Let $ O=\bigsqcup_{\gamma\in\Gamma}O_{\gamma} $ be a $(\Gamma, r)$-separate open subset of $V$. For each $\gamma\in \Gamma$, we define $ (\q)_{O_\gamma} $ to be a $C^*$-subalgebra of $ \q $ generated by $[(h_1, \cdots, h_n, \cdots)] \in \q$ such that $\supp(h_n)\subseteq O_\gamma \cap V_n$ for all $n\in \N$. 

For each $\gamma \in \Gamma$ and $S\geq0$, we denote $Y_\gamma(S):=\{x\in P_d(X): d(x,\gamma) \leq S\}$.
We also fix two proper functions $\rho_\pm: [0,\infty) \to [0,\infty)$ such that for any $x,y \in P_d(X)$ we have:
\[
\rho_-(d(x,y)) \leq \|f(x) - f(y)\| \leq \rho_+(d(x,y)).
\]



The following lemma is actually from \cite[Lemma 4.5]{CWY15}, hence we omit the proof.

\begin{lem}\label{lem:local isomorphism for local twisted Roe algebra}
Let $ O=\bigsqcup_{\gamma\in\Gamma}O_{\gamma} $ be a $(\Gamma, r)$-separate open subset of $V$. Then
\[ 
\c{(P_d(X),\q)}_O \cong \lim_S \prod_{\gamma\in\Gamma} \c(Y_\gamma(S))\otimes{(\q)}_{O_\gamma}.
\]
\end{lem}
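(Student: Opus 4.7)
The plan is to build an explicit isomorphism by decomposing each operator along the disjoint pieces $O_\gamma$. Note that each characteristic function $\chi_{O_\gamma}$ lies in $C_b(V) \subset \mathcal{M}(\q)$ and so acts as a projection on the Hilbert module $E = \ell^2(Z_d;\q)$; these projections are pairwise orthogonal because the $O_\gamma$ are pairwise disjoint. For $T \in \C[P_d(X),\q]_O$ I set $T_\gamma := \chi_{O_\gamma} T$. Since the values of $T$ are supported in $O = \bigsqcup_\gamma O_\gamma$, one has $T = \sum_\gamma T_\gamma$, and moreover $T_\gamma = \chi_{O_\gamma} T \chi_{O_\gamma}$ because $(\q)_{O_\gamma}\cdot(\q)_{O_{\gamma'}} = 0$ for $\gamma \neq \gamma'$. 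The candidate map is then
\[
\Phi: \C[P_d(X),\q]_O \longrightarrow \prod_{\gamma \in \Gamma} \c(Y_\gamma(S)) \otimes (\q)_{O_\gamma}, \quad T \longmapsto (T_\gamma)_{\gamma \in \Gamma},
\]
for an appropriate scale $S$ that I will now identify.

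Next I carry out the support analysis. Suppose $T$ has propagation at most $R_0$ (condition (3)) and $\supp(h_n) \subseteq B_{V_n}(f(x),r)$ whenever $T(x,y) = [(h_1,\ldots,h_n,\ldots)]$ and $n$ is large (condition (4)). If $T_\gamma(x,y) \neq 0$ then $B_{V_n}(f(x),r)$ meets $O_\gamma \subseteq B_V(f(\gamma),r)$, so $\|f(x)-f(\gamma)\| < 2r$; the coarse embedding estimate gives $d(x,\gamma) \leq \rho_-^{-1}(2r) =: S_1$, and combined with propagation $\le R_0$ this forces both $x,y \in Y_\gamma(S_1 + R_0)$. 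Setting $S := S_1 + R_0$, each $T_\gamma$ lies in the (algebraic version of) $\c(Y_\gamma(S)) \otimes (\q)_{O_\gamma}$. The orthogonality of the projections $\chi_{O_\gamma}$ shows that the submodules $\ell^2(Z_d;(\q)_{O_\gamma})$ are mutually orthogonal direct summands of $E$ on which $T_\gamma$ acts (and on whose orthogonal complement $T_\gamma$ vanishes), so $T$ decomposes as an operator direct sum and $\|T\| = \sup_\gamma \|T_\gamma\|$. Hence $\Phi$ is an isometric $\ast$-homomorphism, extending by continuity to
\[
\Phi: \c(P_d(X),\q)_O \longrightarrow \lim_S \prod_{\gamma\in\Gamma} \c(Y_\gamma(S)) \otimes (\q)_{O_\gamma}.
\]

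For surjectivity, given a uniformly norm-bounded tuple $(T_\gamma)_\gamma$ with each $T_\gamma \in \C[Y_\gamma(S),(\q)_{O_\gamma}]$ (a dense subalgebra of the $S$-th factor), I assemble $T := \sum_\gamma T_\gamma$ as a strong-operator limit on $E$; orthogonality of the $T_\gamma$'s yields $\|T\| = \sup_\gamma \|T_\gamma\| < \infty$. The step I expect to be the main technical hurdle is verifying that the assembled $T$ genuinely lands in $\C[P_d(X),\q]$, i.e.\ satisfies conditions (1)--(4) of Definition \ref{defn:original algebraic twisted Roe} \emph{uniformly} across $\gamma$: conditions (3) and (4) are immediate from the uniform choices of $2S$ and $r$, but conditions (1) and (2) require the observation that $\Gamma \subset X$ inherits bounded geometry, so for any bounded $B \subset X$ only finitely many $\gamma$ can satisfy $Y_\gamma(S) \cap B \neq \emptyset$, and at each scale $S$ the number of $\gamma$ contributing at a given $x \in Z_d$ is uniformly bounded. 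Passing to norm closures and taking the direct limit over $S \to \infty$ then upgrades this bijection at the algebraic level to the required $\ast$-isomorphism of $C^*$-algebras.
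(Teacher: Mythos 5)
The paper does not actually prove this lemma---it cites \cite{CWY15}---so your argument is new content rather than a parallel to the paper's exposition. Your overall strategy (decompose along the disjoint pieces $O_\gamma$, show that the resulting blocks are mutually orthogonal, and read off an isometric $\ast$-homomorphism) is the natural one. However, there are issues worth flagging.

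First, a minor but genuine inaccuracy: $\chi_{O_\gamma}$ is \emph{not} in $C_b(V)$, since the characteristic function of an open set is not continuous, so it does not literally lie in $\mathcal{M}(\q)$ or act as a projection on $E$. The decomposition is nonetheless salvageable: for $T$ with all matrix entries supported (as closed subsets) in $O = \bigsqcup_\gamma O_\gamma$, each entry $T_{xy}$ satisfies $\chi_{O_\gamma} T_{xy} \in (\q)_{O_\gamma}$ because $O_\gamma$ is clopen \emph{inside} $O$ and $\supp((T_{xy})_n) \subseteq O$, so $\chi_{O_\gamma}(T_{xy})_n$ is again continuous with support in $O_\gamma$. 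You should justify $T_\gamma := \chi_{O_\gamma} T \chi_{O_\gamma}$ this way rather than via global multiplier projections. Relatedly, you silently identify the ``$r$'' from condition (4) of Definition \ref{defn:original algebraic twisted Roe} with the ``$r$'' in ``$(\Gamma,r)$-separate''; they are distinct parameters, and the correct bound is $\|f(x)-f(\gamma)\| < r + r'$ rather than $2r$.

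The serious gap is in surjectivity. You correctly identify that assembling $T = \sum_\gamma T_\gamma$ requires conditions (1)--(4) of Definition \ref{defn:original algebraic twisted Roe} to hold for the assembled operator, but your stated resolution of (1) and (2) does not work. Bounded geometry of $\Gamma$ controls the \emph{number of $\gamma$'s} contributing at a given $x \in Z_d$ or meeting a given bounded set, but it does \emph{not} control the number of nonzero matrix entries of each approximant $T_\gamma^\epsilon$ per row or column. For $d > 0$ the set $Y_\gamma(S) \cap Z_d$ is infinite, and a generic tuple $(T_\gamma)_\gamma \in \prod_\gamma \c(Y_\gamma(S)) \otimes (\q)_{O_\gamma}$ can only be $\epsilon$-approximated blockwise by finitely supported matrices whose row/column counts grow without bound in $\gamma$ (e.g.\ take $T_\gamma = \theta_{\xi_\gamma,\xi_\gamma}$ with $\xi_\gamma$ uniformly spread over an ever-larger finite subset of $Y_\gamma(S)\cap Z_d$; such rank-one projections cannot be approximated within fixed $\epsilon < 1/2$ by operators with a $\gamma$-independent bound $L$ on nonzero entries per row/column). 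So ``the number of $\gamma$ contributing is bounded'' is not enough to land the assembled operator, or even a close approximant, inside $\C[P_d(X),\q]_O$. This is exactly the technical hurdle you anticipated, but the argument you offer does not clear it; the sketch needs a genuinely different mechanism (e.g.\ controlling the module structure more carefully, or exhibiting why condition (2) becomes harmless after passage to the norm closure) before the surjectivity half can be considered complete.
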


For the quasi-local case, we have an analogous decomposition. Since the proof is similar, we only provide a sketch here.
\begin{lem}\label{lem:local isomorphism for local twisted quasi-local algebra 1}
Let $ O=\bigsqcup_{\gamma\in\Gamma}O_{\gamma} $ be a $(\Gamma, r)$-separate open subset of $V$. Then
\[ 
\cq(P_d(X),\q)_O \cong \lim_S \prod_{\gamma\in\Gamma} \cq\big(Y_\gamma(S),\q\big)_{O_\gamma}.
\]
\end{lem}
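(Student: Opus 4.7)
The plan is to mimic the proof of Lemma~\ref{lem:local isomorphism for local twisted Roe algebra}, constructing compatible maps in both directions that assemble into an isomorphism in the direct limit. The key additional work is that we cannot rely on a propagation bound directly; instead, we must use the two quasi-locality conditions (3) and (4) of Definition~\ref{defn:twisted strongly quasi-local algebra} in a coordinated fashion.

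For the forward direction, I would define, for each $S > 0$, a $\ast$-homomorphism
\[
\Phi_S: \prod_{\gamma \in \Gamma} \cq(Y_\gamma(S), \q)_{O_\gamma} \longrightarrow \cq(P_d(X), \q)_O, \quad (T_\gamma)_\gamma \longmapsto \sum_\gamma T_\gamma.
\]
This sum converges in $\B(E)$ because the subalgebras $\{(\q)_{O_\gamma}\}_\gamma$ of $\q$ have pairwise zero product (as the $O_\gamma$ are disjoint), so the operators $T_\gamma$ are mutually orthogonal in the Hilbert-module sense, giving $\|\sum_\gamma T_\gamma\| = \sup_\gamma \|T_\gamma\|$. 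To verify $T := \Phi_S((T_\gamma))$ belongs to $\cq(P_d(X), \q)_O$, I would check the four conditions of Definition~\ref{defn:twisted strongly quasi-local algebra}: matrix entries lie in $(\q)_O$ by inspection; local compactness holds because bounded geometry of $\Gamma$ implies any compact $K \subset P_d(X)$ meets only finitely many $Y_\gamma(S)$; $T$ has propagation at most $2S$ since each $T_\gamma$ is supported in $Y_\gamma(S) \times Y_\gamma(S)$; and if $d(f(C), \supp(h)) > r + \rho_+(S)$, then for every $\gamma$ with $C \cap Y_\gamma(S) \neq \emptyset$ we have $d(f(\gamma), \supp(h)) > r$, forcing $\supp(h) \cap O_\gamma = \emptyset$ and hence $T_\gamma h = 0$.

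For the reverse direction, given $T \in \cq(P_d(X), \q)_O$, I would construct an approximate splitting at scale $S$ as follows. Choose a family of continuous bumps $\{h_\gamma\}_\gamma$ with $0 \le h_\gamma \le 1$ and $\supp(h_\gamma) \subseteq O_\gamma$ such that $T \cdot \sum_\gamma h_\gamma$ approximates $T$ (using that $T$ has $V$-support in $O = \bigsqcup O_\gamma$). Define $T_\gamma^{(S)} := \chi_{Y_\gamma(S)} T h_\gamma \chi_{Y_\gamma(S)}$, which lies in $\cq(Y_\gamma(S), \q)_{O_\gamma}$ with $\|T_\gamma^{(S)}\| \le \|T\|$. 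The error decomposes as
\[
T h_\gamma - T_\gamma^{(S)} = \chi_{Y_\gamma(S)^c} T h_\gamma + \chi_{Y_\gamma(S)} T h_\gamma \chi_{Y_\gamma(S)^c}.
\]
The first term is controlled uniformly in $\gamma$ by condition (4), since $d(f(Y_\gamma(S)^c), \supp(h_\gamma)) \ge \rho_-(S) - r$ grows with $S$. The second term is handled by introducing an intermediate buffer $S' > S + r_1$ and decomposing $\chi_{Y_\gamma(S)^c} = \chi_{Y_\gamma(S') \setminus Y_\gamma(S)} + \chi_{Y_\gamma(S')^c}$: the $\chi_{Y_\gamma(S')^c}$ part is controlled by condition (3) (quasi-locality in $P_d(X)$ across the buffer), and the shell contribution is handled by applying condition (4) to $T^\ast$ via Remark~\ref{rem:twisted sq alg star condition}. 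Summing over $\gamma$ and using orthogonality of $V$-supports keeps the norm a supremum, so the total error tends to zero as $S \to \infty$.

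The main obstacle will be the second boundary term $\chi_{Y_\gamma(S)} T h_\gamma \chi_{Y_\gamma(S)^c}$: unlike the Roe setting, where finite propagation gives an immediate cutoff, here we must combine the base-space quasi-locality (condition (3)) with the twisted $V$-direction decay (condition (4)) in order to obtain a uniform-in-$\gamma$ bound that vanishes as $S \to \infty$. Together with some care in the bump-function approximation (to interpret non-continuous characteristic multipliers as $C_b(V)$-limits), this constitutes the technical heart of the proof.
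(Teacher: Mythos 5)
Your overall strategy matches the paper's: build a map $T\mapsto (T\chi_{O_\gamma})_\gamma$ (or a bump-function approximation thereof) and its inverse $\Phi_S$, then estimate the boundary errors. The forward direction $\Phi_S$ is fine. The problem is in the error analysis for the reverse direction, where you have a genuine gap.

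The gap is that you never invoke the key algebraic fact that makes the estimate close: any $h\in C_b(V)$, viewed as a diagonal multiplier on $E$, acts by \emph{scalar} multiplication on the $C_0(V_n)$ factor of $\A_n\otimes\K \cong C_0(V_n,\Cliff(W_n)\otimes\K)$, and is therefore a \emph{central} element of $\mathcal{M}(\q)$. Consequently $Th = hT$ for every $T$ whose matrix entries lie in $\q$, in particular for $T\in \cq(P_d(X),\q)_O$. The same holds for the locally constant projections $\chi_{O_\gamma}$, which on $(\q)_O$ are central because $O=\bigsqcup_\gamma O_\gamma$ is a disjoint union of open sets. The paper uses this (``Note that $T\chi_{O_\gamma}=\chi_{O_\gamma}T$'') to write the second boundary term as $\chi_{Y_\gamma(S)}\chi_{O_\gamma}T\chi_{Y_\gamma(S)^c}$, which is then bounded \emph{directly} by Remark~\ref{rem:twisted sq alg star condition} (i.e.\ by the version of condition (4) for $hT\chi_C$), with the same choice of $S$ as for the first term. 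No buffer, no shell decomposition, and no separate appeal to condition (3) is needed — both boundary terms are handled symmetrically.

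Without this commutativity, your buffer argument does not actually close the shell term. Condition (4) controls $\chi_C T h$, and Remark~\ref{rem:twisted sq alg star condition} controls $h T\chi_C$; neither, as stated, controls the expression you have, which is of the form $\chi_{C}\,T h_\gamma\,\chi_{D}$ with the function sandwiched in the middle. Taking adjoints only converts it into $\chi_D\, \overline{h_\gamma} T^*\,\chi_C$, which is again of a form $\chi_D h T^*\chi_C$ (not $\chi_D T^* h$ nor $h T^*\chi_C$), so you cannot invoke Remark~\ref{rem:twisted sq alg star condition} for $T^*$ either. The only way to move $h_\gamma$ past $T$ is precisely the centrality observation. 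Once you make it, the entire buffer machinery becomes superfluous, and your proof collapses to the paper's. I would also suggest working directly with $\chi_{O_\gamma}$ (which exactly gives $T=\sum_\gamma T\chi_{O_\gamma}$ strongly) rather than approximating by continuous bumps $h_\gamma$: this avoids the extra approximation error $\|T\chi_{O_\gamma}-Th_\gamma\|$ you would otherwise have to control, and the only place continuity is genuinely needed (to apply Remark~\ref{rem:twisted sq alg star condition}) can be handled by a one-line approximation of $\chi_{O_\gamma}$ by a continuous function with support in a slightly larger ball around $f(\gamma)$.
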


\begin{proof}[Sketch of proof]
For any $ T \in \cq(P_d(X),\q)_O $ and $\gamma\in \Gamma$, it is clear that $T\chi_{O_\gamma} \in \cq(P_d(X),\q)_{O_\gamma}$. Given $\epsilon>0$, it follows from Definition \ref{defn:twisted strongly quasi-local algebra} together with Remark \ref{rem:twisted sq alg star condition} that there exist $r_2>0$ such that for any Borel subset $ C\subseteq P_d(X) $ and $h\in C_b(V)_1$ with $ d(f(C),\supp(h))\geq r_2 $, we have $ \|\chi_CTh\|<\epsilon/2$ and $\|h T \chi_C\| < \epsilon/2$.

Take $S:=\sup\{s\in [0,\infty): \rho_-(s) \leq r+r_2\}$. Then for every $\gamma\in \Gamma$, we have
\[
f(X \setminus Y_\gamma(S)) \subseteq V \setminus B(f(\gamma), r+r_2),
\]
which implies that $d(f(X \setminus Y_\gamma(S)), O_\gamma) \geq r_2$. Hence we obtain:
\[
\|T\chi_{O_\gamma} - \chi_{Y_\gamma(S)}T \chi_{O_\gamma}\| = \| \chi_{X \setminus Y_\gamma(S)} T \chi_{O_\gamma} \| < \epsilon/2,
\]
and
\[
\|\chi_{O_\gamma}T - \chi_{O_\gamma} T \chi_{Y_\gamma(S)}\| = \| \chi_{O_\gamma}T \chi_{X \setminus Y_\gamma(S)} \| < \epsilon/2.
\]
Note that $T\chi_{O_\gamma} = \chi_{O_\gamma} T$, so we obtain:
\begin{align*}
\|T\chi_{O_\gamma} - \chi_{Y_\gamma(S)}(T \chi_{O_\gamma})\chi_{Y_\gamma(S)}\| &\leq \|T\chi_{O_\gamma} - \chi_{Y_\gamma(S)}T \chi_{O_\gamma}\| + \|\chi_{Y_\gamma(S)}T \chi_{O_\gamma} - \chi_{Y_\gamma(S)}(T \chi_{O_\gamma})\chi_{Y_\gamma(S)}\|\\
& \leq \epsilon/2 + \|\chi_{O_\gamma}T - \chi_{O_\gamma}T \chi_{Y_\gamma(S)}\| \leq \epsilon/2 +  \epsilon/2 = \epsilon.
\end{align*}
Therefore, we obtain a well-defined homomorphism 
\[
\Phi: \cq(P_d(X),\q)_O \longrightarrow \lim_S \prod_{\gamma\in\Gamma} \cq\big(Y_\gamma(S),\q\big)_{O_\gamma}
\]
defined by 
\[
T \mapsto (T\chi_{O_\gamma})_{\gamma\in \Gamma} = \lim_{S \to \infty} \big(\chi_{Y_\gamma(S)}(T \chi_{O_\gamma})\chi_{Y_\gamma(S)}\big)_{\gamma\in \Gamma}.
\]

On the other hand, note that $T=\sum_{\gamma\in \Gamma} T \chi_{O_\gamma}$ where the summation converges in the strong $*$-operator topology. Moreover, the above argument shows that
\[
T = \sum_{\gamma\in \Gamma} T \chi_{O_\gamma} = \lim_{S \to \infty} \sum_{\gamma\in \Gamma} \chi_{Y_\gamma(S)}(T \chi_{O_\gamma})\chi_{Y_\gamma(S)}.
\]
Therefore, it is not hard to check that $\Phi$ provides a $\ast$-isomorphism. Details are omitted here.
\end{proof}

Furthermore, we have the following:

\begin{lem}\label{lem:local isomorphism for local twisted quasi-local algebra 2}
For each $S \geq 0$ and $\gamma\in \Gamma$, we have 
\[
{\cq(Y_\gamma(S),\q)}_{O_\gamma}\cong\cq(Y_\gamma(S))\otimes{(\q)}_{O_\gamma}.
\]
\end{lem}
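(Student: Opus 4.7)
The plan is to reduce both sides of the isomorphism to a common model $\K(\ell^2(Z_d \cap Y_\gamma(S))) \otimes {(\q)}_{O_\gamma}$, exploiting that $Y_\gamma(S)$ is compact---bounded geometry of $X$ implies only finitely many vertices of $X$ lie within distance $S$ of $\gamma$, so $Y_\gamma(S)$ is a finite subcomplex of $P_d(X)$---and that both $f(Y_\gamma(S))$ and $O_\gamma$ are bounded near $f(\gamma)$ in $V$.

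First I would verify that for any $T \in {\cq(Y_\gamma(S),\q)}_{O_\gamma}$, conditions (3) and (4) of Definition \ref{defn:twisted strongly quasi-local algebra} hold automatically: (3) holds with any $r_1 > 2S$ by the support constraint on $T$; and (4) holds with any $r_2 > r + \rho_+(S) + \diam(O_\gamma)$ because the matrix entries of $T$ have functional support in $O_\gamma \subseteq B_V(f(\gamma),r)$ while $f(Y_\gamma(S)) \subseteq B_V(f(\gamma),\rho_+(S))$, so $Th = 0$ whenever $\supp(h) \cap O_\gamma = \emptyset$. Applying condition (2) with $K = Y_\gamma(S)$ then upgrades local compactness of $T$ to compactness as an operator in $\K(E)$, where $E = \ell^2(Z_d; \q)$.

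Next I would invoke the standard isomorphism $\K(\ell^2(S;A)) \cong \K(\ell^2(S)) \otimes A$ together with a matrix-truncation argument: since $T \in \K(E)$ is the norm limit of its truncations $\chi_F T \chi_F = \sum_{x,y\in F} e_{xy} \otimes T_{xy}$ as $F$ exhausts $Z_d \cap Y_\gamma(S)$, and each truncation lies in $\K(\ell^2(F)) \otimes (\q)_{O_\gamma}$ (because $T_{xy} \in (\q)_{O_\gamma}$), we obtain
\[
{\cq(Y_\gamma(S),\q)}_{O_\gamma} \cong \K(\ell^2(Z_d \cap Y_\gamma(S))) \otimes (\q)_{O_\gamma}.
\]
On the other hand, since $Y_\gamma(S)$ is a compact proper metric space, quasi-locality is vacuous and local compactness forces compactness, so $\cq(Y_\gamma(S)) = \K(\H_{Y_\gamma(S)})$ for any ample $Y_\gamma(S)$-module; taking $\H_{Y_\gamma(S)} := \ell^2(Z_d \cap Y_\gamma(S))$ (which is ample because pointwise multiplication by any nonzero continuous $f$ on the compact space $Y_\gamma(S)$ is noncompact on $\ell^2(Z_d\cap Y_\gamma(S))$) gives $\cq(Y_\gamma(S)) \cong \K(\ell^2(Z_d \cap Y_\gamma(S)))$. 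Combining the two identifications produces the desired $\ast$-isomorphism.

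The main subtle point lies in the matrix-truncation step: it requires $T$ to be genuinely compact in $\K(E)$, not merely locally compact, which is exactly why the upgrade via condition (2) applied to $K = Y_\gamma(S)$ is essential. Once this is in hand, everything else is bookkeeping with the standard tensor product identification for compact module operators over $\ell^2(S;A)$.
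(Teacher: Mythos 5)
Your proof is correct and follows essentially the same route as the paper's: the paper likewise uses condition~(2) with $K=Y_\gamma(S)$ to conclude $T=\chi_{Y_\gamma(S)}T\in\K(E)$, identifies the algebra with compact operators on a Hilbert $(\q)_{O_\gamma}$-module, applies the standard isomorphism $\K(\ell^2(S;A))\cong\K(\ell^2(S))\otimes A$, and identifies $\cq(Y_\gamma(S))$ with the compacts because $Y_\gamma(S)$ is compact. You simply spell out the details (matrix truncation, verification that conditions (3)--(4) are vacuous on a compact support set) that the paper's one-line proof leaves implicit.
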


\begin{proof}
For any $T \in {\cq(Y_\gamma(S),\q)}_{O_\gamma}$, we have $T = \chi_{Y_\gamma(S)} T \in \K(E)$. Hence it is easy to see that
\begin{align*}
{\cq(Y_\gamma(S),\q)}_{O_\gamma}&=\K(\ell^2(Y_\gamma\cap Z_d,\q)) \\
&\cong \K(\ell^2(Y_\gamma(S)\cap Z_d))\otimes(\q)\\
& \cong \cq(Y_\gamma(S))\otimes{(\q)}_{O_\gamma},
\end{align*}
which concludes the proof.
\end{proof}

%

\begin{proof}[Proof of Proposition \ref{prop:isomorphism between local twisted Roe algebra and local twisted quasi-local algebra in K-theory}]
Note that for each $\gamma\in \Gamma$, we have:
\[
\cq(Y_\gamma(S)) \cong \K(\ell^2(Y_\gamma(S)\cap Z_d)) \cong C^*(Y_\gamma(S)).
\]
Hence the proof follows directly from Lemma \ref{lem:local isomorphism for local twisted Roe algebra}, Lemma \ref{lem:local isomorphism for local twisted quasi-local algebra 1} and Lemma \ref{lem:local isomorphism for local twisted quasi-local algebra 2}.
\end{proof}


%
%

Having established Proposition \ref{prop:isomorphism between local twisted Roe algebra and local twisted quasi-local algebra in K-theory}, we now apply a Mayer-Vietoris sequence argument to obtain Theorem \ref{thm:isomorphism between twisted Roe algebra and twisted quasi-local algebra in K-theory}. To achieve, we need two extra lemmas. The first is actually \cite[Lemma 4.8]{CWY15} together with its quasi-local version. The proof is similar, hence omitted.

\begin{lem}\label{lem:a direc limit about local twisted algebra}
Let $N \in \N$ and $\Gamma_1, \cdots, \Gamma_N$ be $N$ mutually disjoint subsets of $X$. For any $r>0$ and $j \in \{1,2,\cdots, N\}$, let
\[
O_{r,j} :=\bigcup_{\gamma\in\Gamma_j}B_V(f(\gamma),r).
\]
Then for any $r_0>0$ and $k \in \{1,2,\cdots, N-1\}$, we have the following:
	\begin{align*}
		  \mathrm{(1)} \qquad \lim_{r<r_0, r \to r_0}&{\c(P_d(X),\q)}_{O_{r,k}} + \lim_{r<r_0, r \to r_0}{\c(P_d(X),\q)}_{\bigcup_{j=1}^{k-1}O_{r,j}} \\
		&=\lim_{r<r_0, r \to r_0}{\c(P_d(X),\q)}_{\bigcup_{j=1}^{k}O_{r,j}};\\
		 \mathrm{(2)} \qquad  \lim_{r<r_0, r \to r_0}&{\c(P_d(X),\q)}_{O_{r,k}} \cap \lim_{r<r_0, r \to r_0}\c(P_d(X),\q)_{\bigcup_{j=1}^{k-1}O_{r,j}}\\
		 &=\lim_{r<r_0, r \to r_0}{\c(P_d(X),\q)}_{O_{r,k}\cap(\bigcup_{j=1}^{k-1}O_{r,j})};\\
		 \mathrm{(3)} \qquad  \lim_{r<r_0, r \to r_0}&{\cq(P_d(X),\q)}_{O_{r,k}} + \lim_{r<r_0, r \to r_0}{\cq(P_d(X),\q)}_{\bigcup_{j=1}^{k-1}O_{r,j}}\\
		 &= \lim_{r<r_0, r \to r_0}{\cq(P_d(X),\q)}_{\bigcup_{j=1}^{k}O_{r,j}};\\
		 \mathrm{(4)} \qquad  \lim_{r<r_0, r \to r_0}&{\cq(P_d(X),\q)}_{O_{r,k}} \cap \lim_{r<r_0, r \to r_0}{\cq(P_d(X),\q)}_{\bigcup_{j=1}^{k-1}O_{r,j}}\\
		 &=\lim_{r<r_0, r \to r_0}{\cq(P_d(X),\q)}_{O_{r,k}\cap(\bigcup_{j=1}^{k-1}O_{r,j})}.
	\end{align*}
\end{lem}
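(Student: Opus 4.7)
The proof follows the structural lines of \cite[Lemma 4.8]{CWY15}, which establishes (1) and (2); the new content is the quasi-local analogues (3) and (4), whose arguments are parallel. In all four cases, the inclusions $\subseteq$ in (1), (3) and $\supseteq$ in (2), (4) are immediate from the definitions, since the local algebras are monotone in the open subset and a sum of elements whose coefficient supports lie in $A$ and $B$ respectively has coefficient support in $A \cup B$. The substantive content lies in the reverse inclusions, which I would handle by partition-of-unity and cutoff arguments acting on the coefficient side in $V$. The presence of $\lim_{r<r_0,\, r\to r_0}$ is precisely what allows one to enlarge radii slightly when forming such cutoffs.

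For the $\supseteq$ direction of (3), given $T$ in the right-hand side and $\epsilon>0$, I would pick $r<r_0$ and $T'\in \cq(P_d(X),\q)_{\bigcup_{j=1}^{k}O_{r,j}}$ with $\|T-T'\|<\epsilon$. Choosing an intermediate radius $r<r'<r_0$, I would construct continuous functions $\phi_k,\phi_{<k}:V\to [0,1]$ satisfying $\phi_k+\phi_{<k}\equiv 1$ on $\bigcup_{j=1}^{k}O_{r,j}$, with $\supp(\phi_k)\subseteq O_{r',k}$ and $\supp(\phi_{<k})\subseteq \bigcup_{j=1}^{k-1}O_{r',j}$. Viewing $\phi_k,\phi_{<k}$ as $C_b(V)$-multipliers acting on $E$ via pointwise multiplication in each $\A_n=C_0(V_n)\otimes\Cliff(W_n)$, the decomposition $T'=\phi_k T'+\phi_{<k}T'$ splits $T'$ into the required summands at parameter $r'$, and passing to the direct limit yields the claim.

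For the $\subseteq$ direction of (4), suppose $T$ lies in both direct limits and fix $\epsilon>0$. Pick $r_1<r_0$ and $T_1\in \cq(P_d(X),\q)_{O_{r_1,k}}$ with $\|T-T_1\|<\epsilon$, and similarly $r_2<r_0$ and $T_2\in \cq(P_d(X),\q)_{\bigcup_{j=1}^{k-1}O_{r_2,j}}$ with $\|T-T_2\|<\epsilon$. Choosing a continuous cutoff $\phi:V\to[0,1]$ supported in $\bigcup_{j=1}^{k-1}O_{r_3,j}$ for some $r_2<r_3<r_0$ and equal to $1$ on a neighbourhood containing the coefficient support of $T_2$, the product $\phi T_1$ lies in $\cq(P_d(X),\q)_{O_{r_1,k}\cap \bigcup_{j=1}^{k-1}O_{r_3,j}}$ and satisfies $\|\phi T_1-T\|\le \|\phi(T_1-T_2)\|+\|\phi T_2-T\|\le 2\epsilon$, delivering the required approximation.

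The main obstacle compared to the Roe algebra case is verifying that the $C_b(V)$-multipliers $\phi_k,\phi_{<k},\phi$ preserve the four conditions in Definition \ref{defn:twisted strongly quasi-local algebra}. Conditions (1) and (2) are automatic, while (3) and (4) follow by combining the norm bound $\|\phi S\|\le \|\phi\|_\infty \|S\|$ with Remark \ref{rem:twisted sq alg star condition}, which ensures that left and right multiplication by a $C_b(V)_1$-function interact controllably with the quasi-locality estimates up to a small shift of the relevant parameters. Once this bookkeeping is carried out, the direct-limit manipulations go through exactly as in \cite[Lemma 4.8]{CWY15}.
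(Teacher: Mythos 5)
Your proposal follows the same approach the paper intends: the paper omits the proof, deferring the Roe case to \cite[Lemma 4.8]{CWY15} and noting the quasi-local version is ``similar'', and your cutoff-by-$C_b(V)$-multiplier argument on the coefficient side is exactly the standard mechanism for such decomposition lemmas in the twisted setting. The verification that multiplication by $\phi\in C_b(V)_1$ preserves conditions (1)--(4) of Definition \ref{defn:twisted strongly quasi-local algebra} is correct since $\phi$ commutes with $\chi_C$, $\chi_K$, and $h\in C_b(V)$, so it is a contraction that leaves the relevant norm estimates unchanged. One trivial slip: in the $\subseteq$ direction of (4) the triangle inequality gives $\|\phi T_1-T\|\le \|\phi(T_1-T_2)\|+\|\phi T_2-T\|\le 2\epsilon+\epsilon=3\epsilon$ rather than $2\epsilon$, which of course does not affect the argument.
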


\begin{lem}\label{lem:the direct limit of local twisted quasi-local algebra and local twisted Roe algebra}
For any $r>0$, denote
\[
O_r:= \bigcup_{x\in X}B_V(f(x),r) = \Nd_r\big(\mathrm{Im} f\big)
\]
where $X \to V$ is the coarse embedding. Then we have:
\begin{align*}
 \lim_{r\to+\infty}{\c(P_d(X),\q)}_{O_r}&\cong\c(P_d(X),\q);\\
 \lim_{r\to+\infty}{\cq(P_d(X),\q)}_{O_r}&\cong\cq(P_d(X),\q).
\end{align*}
\end{lem}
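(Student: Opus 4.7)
The plan is to prove both isomorphisms by showing that every element of the ambient algebra is a norm-limit of elements whose matrix entries are supported inside $O_r$ for some $r > 0$. In both cases the inclusions $\lim_{r\to\infty}(\cdot)_{O_r} \subseteq (\cdot)$ are immediate, since each $(\cdot)_{O_r}$ sits as a $C^*$-subalgebra of the whole, so only the reverse ``density'' inclusions require work.

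For the first (restricted twisted Roe) isomorphism I would argue directly from Definition \ref{defn:original algebraic twisted Roe}. For any $T \in \C[P_d(X),\q]$, condition (4) in that definition furnishes some $r > 0$ such that whenever $T(x,y) = [(h_1,\ldots,h_n,\ldots)]$ one has $\supp(h_n) \subseteq B_{V_n}(f(x),r)$ for all $n$ large enough that $f(x) \in V_n$ (and by adjusting the representative we may take $h_n = 0$ for the finitely many small $n$). Since $f(x) \in \mathrm{Im}\,f$, this already forces $\supp(h_n) \subseteq O_r \cap V_n$ for every $n$, so $T$ itself qualifies as a generator of $\C[P_d(X),\q]_{O_r}$. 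Hence $\C[P_d(X),\q] = \bigcup_{r>0} \C[P_d(X),\q]_{O_r}$, and passing to norm closures in $\B(E)$ yields the first isomorphism.

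For the twisted quasi-local algebra I would use a cutoff argument. Choose a continuous function $\phi_r : V \to [0,1]$ with $\phi_r \equiv 1$ on $O_{r/2}$ and $\supp(\phi_r) \subseteq O_r$; the explicit formula $\phi_r(v) := \max\{0,\, 1 - 2 d(v, O_{r/2})/r\}$ works. The restrictions $\phi_r|_{V_n}$ assemble into a multiplier of $\mathcal{Q}((\A_n)_{n})$, and hence (via tensoring with $\Id_{\K}$) of $\q$, which in turn extends diagonally to a self-adjoint multiplier of $E = \ell^2(Z_d;\q)$. Given $T \in \cq(P_d(X),\q)$, I would consider the compression $\phi_r T \phi_r$: its matrix entries $\phi_r T_{xy} \phi_r$ have supports inside $O_r \cap V_n$, and a routine verification of conditions (1)--(4) of Definition \ref{defn:twisted strongly quasi-local algebra} --- using that $\phi_r$ commutes with the spatial cutoffs $\chi_C$ and has norm at most $1$ --- shows that $\phi_r T \phi_r \in \cq(P_d(X),\q)_{O_r}$.

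The main obstacle is then the approximation $\|T - \phi_r T \phi_r\| \to 0$ as $r\to\infty$. Writing
\[
T - \phi_r T \phi_r = (1-\phi_r)T + \phi_r T(1-\phi_r),
\]
it suffices to show that $\|(1-\phi_r)T\|$ and $\|T(1-\phi_r)\|$ both tend to $0$. Since $\supp(1-\phi_r) \subseteq V \setminus O_{r/2}$ lies at distance at least $r/2$ from $\mathrm{Im}\,f$, applying Definition \ref{defn:twisted strongly quasi-local algebra}(4) with $C = P_d(X)$ and $h = 1 - \phi_r$ controls $\|T(1-\phi_r)\|$, while Remark \ref{rem:twisted sq alg star condition} applied to the same data controls $\|(1-\phi_r)T\|$. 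Both bounds become arbitrarily small once $r/2$ exceeds the thresholds $r_2, r_2'$ produced by those statements for a given $\epsilon$. This systematic use of the two ``off-diagonal decay'' conditions of Definition \ref{defn:twisted strongly quasi-local algebra} is the essential step that distinguishes the quasi-local case from the easy algebraic Roe case.
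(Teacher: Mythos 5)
Your argument is correct and follows essentially the same strategy as the paper: cut off $T$ by a bump function supported near $\mathrm{Im}\,f$ and use the off-diagonal decay condition (Definition \ref{defn:twisted strongly quasi-local algebra}(4), together with Remark \ref{rem:twisted sq alg star condition}) to control the error. The paper's own proof for the quasi-local case uses a one-sided cutoff $T(1-h)$ rather than your two-sided compression $\phi_r T \phi_r$: since multiplication by $(1-h)$ already confines the matrix entries $T_{xy}(1-h)$ to $\supp(1-h)$, the one-sided version gives the support condition directly and leaves only one norm estimate to verify, so it is marginally cleaner; your version costs the extra identity $T - \phi_r T\phi_r = (1-\phi_r)T + \phi_r T(1-\phi_r)$ and the observation $\|\phi_r\|\le 1$. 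For the Roe case the paper simply cites \cite[Lemma 4.7]{CWY15}, whereas you give a short direct argument --- every element of $\C[P_d(X),\q]$ already satisfies condition (4) of Definition \ref{defn:original algebraic twisted Roe} for some $r$ and hence lies in $\C[P_d(X),\q]_{O_{r}}$ --- which is a reasonable and self-contained alternative.

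One inaccuracy worth flagging in both places: you treat $f(x)$ for $x\in Z_d$ (and more generally $f(C)$ for $C\subseteq P_d(X)$) as if it lay in $\mathrm{Im}\,f = f(X)$, but $f$ here is the affine extension to $P_d(X)$, so $f(Z_d)$ is only contained in the bounded neighbourhood $\Nd_{\rho_+(d)}(f(X))$. Concretely: in the Roe case you need $O_{r+\rho_+(d)}$ rather than $O_r$, and in the quasi-local case the estimate $d(f(P_d(X)),\supp(1-\phi_r))\ge r/2$ should read $\ge r/2 - \rho_+(d)$. Since one only needs these bounds to be eventually large, the argument survives unchanged after this bookkeeping adjustment, but as written the distance claim is slightly off.
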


\begin{proof}
The Roe case is already known from \cite{CWY15}, hence we only show the second. Given $T \in \cq(P_d(X),\q)$ and $\epsilon>0$, there exists $r>0$ such that for any Borel subset $ C\subseteq P_d(X) $ and $h\in C_b(V)_1$ with $ d(f(C),\supp(h))>r $, we have $ \|\chi_CTh\|<\epsilon $. In particular, taking a function $h \in C_b(V)_1$ such that $h|_{O_r} = 0$ and $h|_{O_{2r}}=1$, we obtain:
\[
\|T - T(1-h)\| = \|Th\| = \|\chi_X T h\| < \epsilon.
\]
Note that $T(1-h) \in {\c(P_d(X),\q)}_{O_{2r}}$, hence we conclude the proof.
\end{proof}

Now we are in the position to prove Theorem \ref{thm:isomorphism between twisted Roe algebra and twisted quasi-local algebra in K-theory}:

\begin{proof}[Proof of Theorem \ref{thm:isomorphism between twisted Roe algebra and twisted quasi-local algebra in K-theory}]
For any $r>0$, denote $O_r:= \bigcup_{x\in X}B_V(f(x),r)$. From Lemma \ref{lem:the direct limit of local twisted quasi-local algebra and local twisted Roe algebra}, it suffice to show that for each $r_0>0$, the map
\[
i^\A_\ast: K_\ast \big( \lim_{r<r_0, r \to r_0}{\c(P_d(X),\q)}_{O_r} \big) \longrightarrow K_\ast \big( \lim_{r<r_0, r \to r_0}{C^*_q(P_d(X),\q)}_{O_r} \big)
\]
is an isomorphism. 

Since $X$ has bounded geometry, there follows (see, \emph{e.g.}, \cite[Lemma12.2.3]{willett2020higher}) that there exists an $N \in \N$ such that
\begin{enumerate}
 \item $X = \bigsqcup_{j=1}^N \Gamma_j$ for some $\Gamma_j \subset X$ with $\Gamma_j \cap \Gamma_{j'} = \emptyset$ whenever $j \neq j'$;
 \item for each $j\in \{1,2,\cdots, N\}$ and $\gamma \neq \gamma'$ in $\Gamma_j$, we have
 \[
 \|f(\gamma) - f(\gamma')\|_V > 2r_0.
 \]
\end{enumerate}
For any $0<r<r_0$ and $j\in \{1,2,\cdots, N\}$, let 
\[
O_{r,j}:=\bigcup_{x\in \Gamma_j} B_V(f(x),r) = \Nd_r(f(\Gamma_j)).
\]
Then $O_r = \bigcup_{j=1}^N O_{r,j}$, and each $O_{r,j}$ or $O_{r,j} \cap \big( \bigcup_{i=1}^{j-1} O_{r,i} \big)$ are $(\Gamma_j,r)$-separate for any $j\in \{1,2,\cdots,N\}$. Applying the Mayer-Vietoris sequence argument finitely many times based on Lemma \ref{lem:a direc limit about local twisted algebra} and using Proposition \ref{prop:isomorphism between local twisted Roe algebra and local twisted quasi-local algebra in K-theory}, we conclude the proof.
\end{proof}

\section{Proof of the main result}\label{sec:proof}

\begin{proof}[Proof of Theorem \ref{thm:main result}]
	
Combining Diagram (\ref{EQ:Bott map diagram}) with the coarse assembly maps, we obtain the following commutative diagram:
\begin{equation*}
	\xymatrix{
		\lim\limits_{d\to\infty}K_*(P_d(X)) \ar[r]^-{\mu}  \ar[rd]^-{\mu_{sq}} &   \lim\limits_{d\to\infty}K_\ast(C^*(P_d(X)))  \ar[r]^-{\beta_r} \ar[d]_{i_\ast} & \lim\limits_{d\to\infty}K_\ast(C^*(P_d(X),\q)) \ar[d]^{i^{\mathcal{A}}_\ast} \\
		& \lim\limits_{d\to\infty}K_\ast(C^*_{sq}(P_d(X)))  \ar[r]^-{\beta_{sq}} & \lim\limits_{d\to\infty}K_\ast(C^*_{q}(P_d(X),\q)),}
\end{equation*}	
Here we use the same notation to denote the homomorphisms after taking $d \to \infty$.
%
%
Recall from \cite[Theorem 1.1]{CWY15} together with Remark \ref{rem:diff between twisted Roe} that the composition $\iota_\ast \circ \beta_r \circ \mu$ is injective, where 
\[
\iota_\ast: K_\ast\big(C^*(P_d(X),\q)\big) \to K_\ast\big(C^*(P_d(X),\Q)\big)
\]
is induced by the inclusion from Corollary \ref{cor:relation between twisted Roe}. Hence we obtain that the composition $\beta_r \circ \mu$ is injective as well. Also note from Theorem \ref{thm:isomorphism between twisted Roe algebra and twisted quasi-local algebra in K-theory} that $i^{\mathcal{A}}_\ast$ is an isomorphism, hence we conclude that $\mu_{sq}$ is injective via a diagram chasing argument.
\end{proof}

\section{The case of non-positively curved manifolds}\label{sec:mfd}

In this final section, we outline the case of metric spaces which can be coarsely embedded into a simply-connected complete Riemannian manifold with non-positively sectional curvature. Recall that the coarse Novikov conjecture was verified for such spaces in \cite{SW07}. Combining the approach from \cite{SW07} with the techniques developed in this paper, we are able to prove the following:



\begin{thm}\label{thm:main result for manifold case}
	Let $ X $ be a bounded geometry metric space which can be coarsely embedded into a simply-connected complete Riemannian manifold of non-positively sectional curvature. Then the strongly quasi-local coarse Novikov conjecture holds for $ X $.
\end{thm}

Since the proof is quite similar to the one for Theorem \ref{thm:main result}, we only provide an outline as follows.


First let us fix some notation. Let $ M $ be a simply-connected complete Riemannian manifold of non-positively sectional curvature. Without loss of generality, we may assume the dimension of $ M $ is $2n $ for some $ n\in\N $. Following the notation from \cite{SW07}, denote the Clifford bundle over $M$ by $ \Cliff(TM) $ and let $\A:=C_0(M,\Cliff(TM))$ be the $C^*$-algebra of continuous functions $a: M \to \Cliff(TM)$ such that $a(x) \in \Cliff(T_x M)$ and vanish at infinity. Let $C_b(M,\Cliff(TM))$ be the $C^*$-algebra of bounded continuous functions $a: M \to \Cliff(TM)$ such that $a(x) \in \Cliff(T_x M)$.

%
%

%
%
%
%

For each $d \geq 0$, we take a countable dense subset $Z_d$ in the Rips complex $P_d(X)$ such that $Z_d \subseteq Z_{d'}$ whenever $d<d'$. Consider the Hilbert module $E:=\ell^2(Z_d;\A\otimes\K)$ over $\A \otimes \K$, where $\K$ is the $C^*$-algebra of compact operators on the separable infinite-dimensional Hilbert space $\HH$. Recall that the associated twisted Roe algebra $C^*(P_d(X), \A \otimes \K)$ was defined in \cite[Definition 3.1]{SW07} as a $C^*$-subalgebra in $\B(E)$. 

Following the idea in Definition \ref{defn:twisted strongly quasi-local algebra}, we introduce the following:

%

\begin{defn}\label{defn:twisted strongly quasi-local algebra for manifold}
	The twisted quasi-local algebra $C^*_q(P_d(X),\A\otimes\K)$ is defined to be the $C^*$-subalgebra in $\B(E)$ consisting of elements $T$ satisfying the following conditions: 
	\begin{enumerate}
		\item each matrix entry of $T$ belongs to $\A\otimes\K$;
		\item for any compact $K \subseteq P_d(X)$, we have that $\chi_K T$ and $T \chi_K$ belongs to $\K(E)$;
		\item for any $\epsilon>0$ there is an $r_1$ such that for any Borel subsets $ C,D\subseteq P_d(X) $ with $ d(C,D)>r_1 $, we have $ \|\chi_CT\chi_D\|< \epsilon $ and $ \|\chi_DT\chi_C\|< \epsilon $;
		\item for any $ \epsilon>0 $, there is an $ r_2 $ such that for any Borel subset $ C\subseteq P_d(X) $ and $h\in C_b(M)_1$ with $ d(f(C),\supp(h))>r_2 $, we have $ \|\chi_CTh\|<\epsilon $.
	\end{enumerate}
\end{defn}

The associated Bott maps $ \beta,\beta_{sq} $ can be defined similarly to the case of Banach spaces with Property (H) in Section \ref{sec:Bott}, except that we replace Lemma \ref{lem:uniformly almost flatness of b} by the following:

\begin{lem}[{\cite[Lemma 5.2]{SW07}}]\label{lem:uniformly almost flatness of b for manifold}
	For any $R>0$ and $\epsilon>0$, there exist $r>0$ and a family of idempotents $ \{b_{x,r}\}_{x\in M} $ in $ M_2(\A^+) $ which is \emph{$ (R,\epsilon;r) $-flat} in the following sense: $ \supp(b_{x,r}-b_0)\subseteq B_M(x,r) $ and for any $x,y\in M$ with $\|x-y\|<R$, we have 
	\[
	\sup_{z\in M}{\big\|b_{x,r}(z) - b_{y,r}(z)\big\|}_{\Cliff(T_zM)\otimes M_2(\C)}<\epsilon,
	\]
where $ \A^+ $ represents the unitization of $ \A $.
\end{lem}

We omit the details and simply conclude that we have the following homomorphisms:
\[\beta: K_\ast\big( C^*(P_d(X)) \big) \longrightarrow K_\ast\big( C^*(P_d(X), \A\otimes\K) \big)\]
\[\beta_{sq}: K_*\big( C_{sq}^*(P_d(X)) \big) \longrightarrow K_*\big( C_q^*(P_d(X), \A\otimes\K) \big).\]

The last piece is an isomorphism result analogous to Theorem \ref{thm:isomorphism between twisted Roe algebra and twisted quasi-local algebra in K-theory}, which states that the natural inclusion 
\[
i^{\A}: \c(P_d(X),\A\otimes\K) \longrightarrow \cq(P_d(X),\A\otimes\K)
\]
induces an isomorphism on $K$-theories. Again the proof is similar hence omitted.

%

Finally, we consider the following commutative diagram:
\begin{equation*}
	\xymatrix{
		\lim\limits_{d\to\infty}K_*(P_d(X)) \ar[r]^-{\mu}  \ar[rd]^-{\mu_{sq}} &   \lim\limits_{d\to\infty}K_\ast(C^*(P_d(X)))  \ar[r]^-{\beta} \ar[d]_{i_\ast} & \lim\limits_{d\to\infty}K_\ast(C^*(P_d(X),\A\otimes\K)) \ar[d]^{i^{\mathcal{A}}_\ast} \\
		& \lim\limits_{d\to\infty}K_\ast(C^*_{sq}(P_d(X)))  \ar[r]^-{\beta_{sq}} & \lim\limits_{d\to\infty}K_\ast(C^*_{q}(P_d(X),\A\otimes\K)),}
\end{equation*}
where $ \beta \circ \mu $ is an isomorphism due to the proof for \cite[Theorem 1.1]{SW07} and $ i_*^{\A} $ is an isomorphism from above. Hence we obtain that $ \mu_{sq} $ is injective, which concludes Theorem \ref{thm:main result for manifold case}.

\bibliographystyle{plain}
\bibliography{bibfileQLCE}
\end{document}